\documentclass{amsart}
\usepackage{graphicx} 
\usepackage{amsmath, amsfonts,amssymb,amsthm}
\usepackage[utf8]{inputenc}
\usepackage{mathtools}
\usepackage{biblatex}
\usepackage{tikz-cd}
\usepackage{hyperref}
\usepackage{comment}
\usepackage{enumerate}
\usepackage{enumitem}
\usepackage{multicol}
\usepackage{multirow}
\usepackage[normalem]{ulem}
\usepackage{microtype}
\usepackage{caption}
\usepackage{placeins}
\setlist[enumerate,1]{label=\arabic*.}
\title{Fixed Point Homogeneous Orbifolds with Positive Sectional Curvature}
\author{Jan Nienhaus and Dennis Wulle}

\let\emph\relax 
\DeclareTextFontCommand{\emph}{\bfseries\em}

\newcommand{\OO}{\mathcal{O}}
\newcommand{\Fr}{\mathrm{Fr}\,}
\renewcommand{\O}{\mathrm{O}}
\newcommand{\bb}{\mathbb}

\newcommand{\V}{\mathcal V}
\newcommand{\orb}{\mathrm{orb}}

\newcommand{\Id}{\rm Id}
\newcommand{\reg}{{\rm reg}}
\newcommand{\pr}{{\rm pr}}
\newcommand{\Fix}{{\rm Fix}}
\newcommand{\diag}{{\rm diag}}
\renewcommand{\S}{{\rm S}}
\newcommand{\T}{{\rm T}}

\newcommand{\Sph}{\mathbb{S}}

\newcommand{\gS}{\mathsf{S}}
\newcommand{\gC}{\mathsf{C}}
\newcommand{\gH}{\mathsf{H}}

\newcommand{\gG}{\mathsf{G}}
\newcommand{\rk}{{\rm rank}}

\newcommand{\Z}{\mathbb{Z}}
\newcommand{\R}{\mathbb{R}}
\newcommand{\C}{\mathbb{C}}
\newcommand{\HH}{\mathbb{H}}

\newcommand{\cp}{\mathbb{CP}}

\newcommand{\hp}{\mathbb{HP}}

\newcommand{\into}{\xhookrightarrow{}}
\newcommand{\acton}{\curvearrowright}
\newcommand{\Hom}{{\rm Hom}}
\newcommand{\eff}{{\rm eff}}

\newcommand{\SO}{{\rm SO}}
\newcommand{\Spin}{{\rm Spin}}
\newcommand{\codim}{{\rm codim}\,}

\newcommand{\Isom}{{\rm Isom}}
\newcommand{\std}{{\rm std}}

\newcommand{\U}{\mathsf{U}}

\newcommand{\Sp}{\mathsf{Sp}}

\newcommand{\SU}{\mathsf{SU}}

\newcommand{\NN}{\mathcal{N}}

\newcommand{\Ca}{{\rm Ca}}

\newtheorem{maintheorem}{Theorem}

\newtheorem{maincor}[maintheorem]{Corollary}

\newtheorem{theorem}{Theorem}[section]
\newtheorem{dfn}[theorem]{Definition}
\newtheorem*{theorem*}{Theorem}
\newtheorem{lem}[theorem]{Lemma}

\newtheorem{cor}[theorem]{Corollary}

\newtheorem{rem}[theorem]{Remark}
\newtheorem{exa}[theorem]{Example}

\newcommand{\bs}{\backslash}

\newcommand{\abs}[1]{\left\vert#1\right\vert}

\newcommand{\garrow}{\rightrightarrows}
\newcommand{\Groupoid}{G_1 \garrow G_0}
\newcommand{\setvert}{\,\vert\,}
\newcommand{\End}{{\rm End}}

\begin{document}

\subjclass[2020]{57R18, 53C21, 22A22}

\keywords{Postitive Curvature, Orbifolds, Lie groups, Groupoids}

\begin{abstract}
    We classify fixed point homogeneous Riemannian orbifolds with positive sectional curvature up to equivariant diffeomorphism. As a corollary we obtain a classification of positively curved orbifolds with maximal symmetry rank.
\end{abstract}
\maketitle
\section{Introduction}

The construction and classification of Riemannian manifolds admitting positive curvature is one of the central problems in global differential geometry. Over decades the driving approach has been to impose some symmetry condition. Known as the \textit{Grove Symmetry Program}, this has led to new examples with positive \cite{Esch82,Grove2011, Dearricott2011} and nonnegative \cite{Goette2020, Wilking2002} sectional curvature, positive Einstein metrics \cite{Foscolo2017,Bohm1998, Nienhaus2025, Bohm2004}, and some intermediate positivity conditions \cite{Dominguez-Vazquez2023,DeVito2024}. On the structural side, some classification results have been achieved \cite{Grove1997-xz, gwz, Wulle2023} and topological results have been proven even under very mild symmetry assumptions \cite{Wilking2003,Wil06, Nienhaus22, Amann2010, DESSAI2007, Kennard13, Kennard2021-ge, Kennard2014}.

The main objects of this paper are Riemannian orbifolds. Orbifolds are singular spaces locally modeled on quotients of manifolds by finite groups. They naturally occur in moduli problems, e.g. in the famous work of Deligne and Mumford \cite{Deligne1969}, and in Gromov-Witten theory \cite{Dixon1985StringsOrbifolds, KlebanovWitten1998, DixonFriedanMartinecShenker1987, RuanCrepant, Beentjes2022}. Recently, topologists developed a structure theory of orbifolds \cite{Schwede2020Orbispaces, Pardon2023, Lange2022,dHF2metrics,dHFMetricsOnStacks, dHdMStackyGeodesics}, with notable results including Pardon's proof that every orbifold is a global quotient \cite{Pardon_2022} and BCR's proof of a variant of the crepant resolution conjecture \cite{Beentjes2022}. In Riemmanian geometry, orbifolds have been studied as singularity models for the Ricci flow \cite{Gianniotis2018, Deruelle2024, Li2025}, work with symmetry has dealt with quotients of spheres \cite{Gorodski2016} and biquotients \cite{WulleZarei26}. 
 
 Our first main result is
\begin{maintheorem}\label{main:A}
    Let $\OO$ be a Riemannian orbifold with positive sectional curvature  and $\pi_1^\orb(\OO) = 0$, carrying a fixed point homogeneous action by a compact connected Lie group $G$. Then $\OO$ is equivariantly diffeomorphic to $\bb S^n$, $\Ca \bb P^2$ or a weighted complex or quaternionic projective space with one of the actions in Table \ref{tab:examples}. 
\end{maintheorem}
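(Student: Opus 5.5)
The plan follows the manifold case of Grove--Searle: reduce to a double disk bundle decomposition, classify the possible pieces, and then recognize the models.

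\emph{Step 1: the fixed point component and the soul.} Since the action is fixed point homogeneous, there is a component $F$ of $\Fix(H)$ for some isotropy group $H$, of codimension one in the orbit space $\OO/G$. This means $H$ acts transitively on the unit sphere of the normal orbifold cone of $F$, so that cone is a sphere quotient $\Sph^{k}/\Gamma_x$ acted on transitively by $H$. By the orbifold version of Frankel and the soul construction in the Alexandrov space $\OO/G$, there is a unique point $p^*$ at maximal distance from the boundary face $F^*=F/G$. Its preimage $B=G\cdot p$ is a single orbit, and $\OO/G$ is a cone over $F^*$. Hence $\OO$ decomposes as
\[
\OO = D(F) \cup_{E} D(B),
\]
the union of two orbifold disk bundles over $F$ and over $G/G_p$, glued along a common boundary. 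On the $F$-side the boundary is a bundle with fibre $\Sph^{k}/\Gamma$, and the isotropy along the minimal geodesic from $F$ to $B$ is constant. So the boundary is $G$-equivariantly $G\times_{G_p}(\text{sphere})$ as well as a sphere-type bundle over $F$. In particular $H$ acts transitively on the normal sphere of $B$ at $p$, and $F$ is itself $G_p$-homogeneous.

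\emph{Step 2: classify the normal spheres.} Simple connectivity $\pi_1^\orb(\OO)=0$, combined with the van Kampen argument for the decomposition, forces the local groups appearing in the normal sphere of $B$ to be compatible with a free $H$-action. I would then classify compact connected $H$ acting transitively on a spherical space form $\Sph^{m}/\Gamma$ that occurs as the normal fibre. The possibilities are the Hopf-type actions: $\Sph^1$, $\Sph^3$, $\Sph^7$ with $\Gamma$ trivial, or $\Gamma$ cyclic or a subgroup of $\Sp(1)$ in the circle and quaternionic cases. This is where the weights enter: an $H=\U(1)$ or $\Sp(1)$ acting on $\Sph^{m}$ with ineffective kernel produces the weighted structures.

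\emph{Step 3: recognize the models.} By an induction on dimension I would identify $F$, which is itself positively curved and fixed point homogeneous under $G_p$ or a fibre-type orbifold, as one of the listed spaces. Then I would show that the double disk bundle is determined by the group diagram $(G, G_p, H)$ plus the weights. Comparing with the explicit linear actions on $\Sph^n$, weighted $\cp^n$, weighted $\hp^n$ and $\Ca\bb P^2$ in Table~\ref{tab:examples} then gives the equivariant diffeomorphism, via the orbifold version of the fact that the gluing map is determined up to equivariant isotopy by the diagram.

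\emph{Main obstacle.} The hardest part is Step 2, and the orbifold gluing in Step 3. Unlike the manifold case, the normal spaces are only sphere quotients, and one must show that simple connectivity of $\OO$ restricts the local groups exactly to weighted projective space type. I would first try to lift the whole situation to a $\Sph^1$ or $\Sp(1)$ orbibundle with total space a manifold sphere, or to its orbifold frame bundle, to reduce to the Grove--Searle classification.
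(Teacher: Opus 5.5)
Your overall architecture for the effective case matches the paper's: a soul orbit in $\OO/G$, a double disk bundle, and recognition of the gluing. However, there are genuine gaps.

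\textbf{The ineffective case is missing.} Theorem~\ref{main:A} makes no effectivity assumption, and nothing in your plan can detect ineffective structure. Normal cones $\Sph^k/\Gamma_x$, chart groups and $\Fr\OO$ only see the effective quotient $\OO_{\eff}$. For instance, $\S^1\times\bb Z_k$ acting on $\Sph^3$ through the first factor and $\S^1$ acting by $z\mapsto z^k$ give the same effective $\Sph^2$ but different $\pi_1^{\orb}$. So even a complete classification of $\OO_{\eff}$ leaves open which $\NN$ with $\pi_1^{\orb}(\NN)=0$ lie over each model.

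The paper settles this with groupoids:
\begin{enumerate}
\item The kernels of the local isotropy representations form a normal subgroupoid whose fibre is the generic local group $K$.
\item Because $\pi_1^{\orb}(\OO_{\eff})=0$, the band is trivial. Hence extensions are classified by $H^2_{\orb}(\OO_{\eff};Z(K))\cong\Hom(\pi_2^{\orb}(\OO_{\eff}),Z(K))$.
\item The exact sequence $\pi_2^{\orb}(\OO_{\eff})\xrightarrow{\partial}K\to\pi_1^{\orb}(\NN)\to 1$ forces $\partial$ to be onto.
\item $\pi_2^{\orb}$ vanishes for $\Sph^n$ ($n\ge3$), $\hp(V)$ and $\Ca\bb P^2$, and equals $\bb Z$ for $\cp(W)$. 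So the only additional spaces are $\cp(W)$ with $K=\bb Z_k$, obtained by letting the circle act through $z\mapsto z^k$.
\end{enumerate}
Some argument of this kind is indispensable.

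\textbf{Step~1 misdescribes the decomposition, and Steps~2--3 inherit the error.} Fixed point homogeneity means $F$ is a component of $\Fix(G)$ itself. Through a lift $1\to\Gamma_q\to K_q\to G\to1$ at an intrinsically regular $q\in F$, the group $G$ acts transitively on the normal space $\Sph^k/\Gamma_q$, so $\Gamma_q$ acts freely. At the soul orbit, the lift $K_p$ of $G_p$ acts on the normal sphere $\Sph^l$ with $\Sph^l/K_p=F_{\eff}$. This action is not transitive unless $F$ is a point. Nor is $F$ $G_p$-homogeneous: $G_p$ fixes $F$ pointwise (for $\S^1$ acting on $a_0$ in $\cp^n$, one has $F=\cp^{n-1}$ and $G_p=\S^1$). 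So there is no fixed point homogeneous structure on $F$ to induct on.

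Two different classification inputs are needed:
\begin{enumerate}
\item The identity component of $K_q$ runs through all of Table~\ref{tab:fixhom:homospheres}, not only Hopf actions.
\item The Hopf-type restriction concerns $K_p\acton\Sph^l$. It comes from the Lytchak--Wilking theorem: leaves have dimension $0,1,3,7$ unless the action is transitive, and $7$ is excluded because $\Sph^{15}\to\Sph^8$ is not a group action.
\end{enumerate}
Connectedness of $K_p$ for $k\ge2$ comes from the orbi-connectivity of the double disk bundle, Lemma~\ref{fixpoint:lem:doubledisc}.

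\textbf{Your recognition claim is false as stated.} Take $\cp^2[1,g,g]$, $g\ge1$, with the effective circle action fixing $F=\{a_0=0\}$. For every $g$, a neighbourhood of $[1:0:0]$ is the Hopf action on a smooth $\bb D^4$. So $(G,G_p,H)$ and the slice representation at $p$ are the same for all $g$, yet $F$ carries local group $\bb Z_g$. This is exactly why the Uniqueness Lemma~\ref{lem:unique} needs $\Gamma_q$ as additional input when $\codim F=2$. For $\codim F\ge3$, $\Gamma_q$ is $\pi_1$ of the normal space form and so is determined.

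Your fallback via $\Fr\OO$ does not help either. $\Fr\OO$ is not positively curved and carries no fixed point homogeneous action, so Grove--Searle does not apply to it. The paper uses $\Fr\OO$ only to prove the connectivity lemma.
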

An isometric action of a Lie group on a Riemannian orbifold is called fixed point homogeneous if the group acts transitively on the space of normal directions of some fixed point component. 

The orbifold fundamental group $\pi_1^\orb$ this is a finer invariant than $\pi_1$: $\pi_1^{orb}=0$ implies $\pi_1=0$ but the other implication fails in general. For details see Section \ref{sec:prelim}.

A weighted complex projective space is an orbifold $\cp[p_0, \ldots, p_n]$ defined as the quotient $\bb S^{2n+1}/\S^1$ with $z\in \S^1$ acting via $\diag(z^{p_0}, \ldots, z^{p_n})$, where $p_0, \ldots, p_n$ are nonzero integers.
All of these spaces are simply connected and fixed point homogeneous, and there are infinitely many homotopy types among them in each even dimension $\ge 4$ \cite{Kawasaki1973}. 

One constructs quaternionic weighted projective spaces analogously using quaternionic $\S^3$-representations (see \ref{subsection:wqtproj}). There are only finitely many of these in each dimension, and not all of them occur in Theorem \ref{main:A}, see Section \ref{subsection:quaternionic}.

All weighted projective spaces carry a canonical positively curved metric $g_{std}$ induced by the submersion from the sphere. For each space $\OO$ of Theorem \ref{main:A}, we will call the isometry group of its canonical metric $I_{std}(\OO)$. 

We note that Theorem \ref{main:A} recovers not only the orbifold, but also puts the action of $G$ into an explicit standard form. For a full account of the examples, see Section \ref{sec:examples}.

The classification of fixed point homogeneous actions on manifolds with positive sectional curvature was  achieved by Grove and Searle in \cite{Grove1997-xz}, where only $\bb S^n$, $\Ca \bb P^2$,  $\cp^n$ and $\hp^n$ occur, in short the CROSSes, if $\pi_1 =0$. 

\begin{rem} We  do not make any implicit assumption of effectivity in Theorem \ref{main:A}, as is done in many differential geometry papers on orbifolds. An orbifold is ineffective if the isotropy group is nontrivial at all points. 
\end{rem}
Dealing with ineffective orbifolds comes with difficulties, but cannot be avoided if one is interested in applications: The most common use case of the manifold version of Theorem \ref{main:A} is in further classification results (see above), where one applies it to submanifolds of some given ambient space. However, even for effective orbifolds, suborbifolds are not guaranteed to inherit this property. One may try to resolve this by taking the canonical effective quotient of the suborbifold, but this process loses important embedding information carried by the ineffective structure.

For orbifolds with fundamental group we obtain

\begin{maintheorem}\label{main:fundamentalgroup}
    Let $\overline\OO$ be a Riemannian orbifold with positive sectional curvature, carrying a fixed point homogeneous action by a compact connected Lie group $G$, and let $\OO$ be its universal cover. Then $\pi_1^{orb}(\overline\OO)$ is finite and up to equivariant diffeomorphism the action of $\pi_1^{orb}(\overline \OO)$ on $\OO$ factors through the centralizer of $G$ in $I_{std}(\OO)$.
\end{maintheorem}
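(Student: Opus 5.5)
The plan is to lift everything to the universal cover and then apply Theorem \ref{main:A} there. First, finiteness: $\overline\OO$ is a compact (by Bonnet--Myers for orbifolds, since sectional curvature is bounded below by a positive constant on the compact space) Riemannian orbifold of positive curvature. The orbifold Bonnet--Myers theorem applied to the universal orbifold cover $\OO$, which inherits the pulled back metric with the same curvature bound, shows that $\OO$ is compact. Hence the deck group $\Gamma=\pi_1^{orb}(\overline\OO)$, acting properly discontinuously and isometrically on a compact orbifold, is finite.

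Next I lift the action. Since $G$ is connected, the action of $G$ on $\overline\OO$ lifts to an isometric action of a covering group $\tilde G$ on $\OO$ commuting with $\Gamma$. Passing to the identity component of the lifted group (still compact, since it sits in $\Isom(\OO)$, which is compact), I obtain a connected group $G'$ acting on $\OO$ and covering the $G$-action, commuting with $\Gamma$. The fixed point homogeneity should also lift: take a fixed point component $\overline F$ of $G$ with $G$ transitive on its normal sphere, and a component $F$ of its preimage. The covering is a local isometry of orbifolds respecting the normal structure, so $G'$ fixes $F$ pointwise (a connected group fixing the image and commuting with the discrete deck group fixes the lift), and the normal spaces correspond. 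Thus $G'$ acts fixed point homogeneously on $\OO$. Theorem \ref{main:A} then identifies $(\OO,G')$ equivariantly with one of the models in Table \ref{tab:examples}, carrying $g_{std}$.

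The remaining task, which I expect to be the main obstacle, is to show that $\Gamma$, which commutes with $G'$ but is a priori only a group of diffeomorphisms (isometries of the original, not the standard, metric), can be conjugated by a $G'$-equivariant diffeomorphism into the centralizer $C_{I_{std}(\OO)}(G')$. My approach is to redo the Grove--Searle style structure argument equivariantly. A fixed point homogeneous action in positive curvature has the fixed point component $F$ and a unique ``soul'' orbit $G'x$ at maximal distance, and $\OO$ decomposes as the union of two disk bundles over $F$ and $G'x$. Because $\Gamma$ commutes with $G'$, it permutes the $G'$-fixed components, and by the uniqueness of the maximal fixed point component in positive curvature it preserves $F$. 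It therefore also preserves the far orbit. Consequently $\Gamma$ acts on the double disk bundle decomposition by bundle automorphisms commuting with $G'$. The equivariant diffeomorphism to the model is built from the gluing data of these two bundles, and the $\Gamma$-action is determined by its restriction to $F$ and to the orbit $G'x\cong G'/H$. On the orbit, $G'$-equivariant diffeomorphisms are given by $N(H)/H$, and I would check case by case from the table that these elements, together with the action on $F$, which by induction on dimension is again standard, are realized by elements of $C_{I_{std}}(G')$.

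Finally, I would use an equivariant isotopy argument on the collar between the two disk bundles to straighten the $\Gamma\times G'$-action to the standard one. The delicate points are controlling the non-effective kernels and the orbifold isotropy along $F$. I would handle these by working with the frame bundle or groupoid description from Section \ref{sec:prelim}.
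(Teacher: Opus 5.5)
\textbf{Gap: $\Gamma$ need not preserve $F$.} Your overall architecture matches the paper's Lemma \ref{lem:fixhom:linear} and Corollary \ref{fphom:cor:fundamental}: lift to a connected $G'$ on the cover, identify $(\OO,G')$ via Theorem \ref{main:A}, let $\Gamma$ act on the double disc bundle, read it off on the soul orbit, then compute centralizers. The gap is the step ``by the uniqueness of the maximal fixed point component in positive curvature, $\Gamma$ preserves $F$''. The structure theorem only gives that $\OO^{G'}$ consists of $F$ together with the soul orbit when that orbit is a single point. If $F$ and the soul orbit are both points (cohomogeneity one with two fixed points $p,q$), you have two fixed point components of the same dimension, and a deck transformation commuting with $G'$ can interchange them.

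This really happens. $\SO(n)$ acting on $\bb S^n\subset\R^n\oplus\R$ fixes the two poles. Both $-\mathrm{Id}$ and the reflection $(v,t)\mapsto(v,-t)$ commute with $\SO(n)$ and swap the poles. The resulting quotients, $\mathbb{RP}^n$ and the hemisphere with mirror boundary, are positively curved and fixed point homogeneous with $\pi_1^{orb}=\Z_2$. Such swaps are part of the answer: they give the $\O(m)$-factor with $m=1$ in Corollary \ref{fphom:cor:fundamental}(a). In this situation neither $F$ nor the far orbit is $\Gamma$-invariant, the double disc bundle decomposition is not preserved, and the rest of your argument has nothing to act on.

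\textbf{How to close it.} Treat this case separately, as in the first part of the proof of Lemma \ref{lem:fixhom:linear}. If some element swaps $p$ and $q$, their local groups must agree, which in the $\cp^1[m,n]$ case forces $m=n=1$. The equidistant set $A=\{x : d(x,p)=d(x,q)\}$ is then a single principal orbit $G'/H$ and is $\Gamma$-invariant. On its tubular neighbourhood $(-1,1)\times G'/H$, $\Gamma$ acts by $\pm1$ on the interval factor and through $N(H)/H$ on $G'/H$. The latter is linear by \cite[Sublemma~2.6]{Grove1997-xz}, so the action on the whole sphere is linear.

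\textbf{Smaller points.} Your ``induction on dimension'' for the action on $F$ has no fixed point homogeneous structure to induct on, since $G'$ acts trivially on $F$. It is also unnecessary: $F_{\eff}\cong(\V^1_x(G'x)/G'_x)_{\eff}$, so the $\Gamma$-action on $F$ is induced by the linear action on the normal bundle of the soul orbit, which comes from the $\Gamma\times G'$-equivariant normal exponential map. Finally, compactness of $\overline\OO$ must be a standing hypothesis. Deriving it from Bonnet--Myers via a positive lower curvature bound ``on the compact space'' is circular; once compactness is assumed, your finiteness argument is fine.
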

Explicitly, this means there is a homomorphism $\pi_1^{orb}(\overline\OO)\to I_{std}(\OO)$ whose image commutes with $G\subset I_{std}(\OO)$ such that $\overline\OO$ is equivariantly diffeomorphic to $\OO/\pi_1^{orb}(\overline \OO)$, where the action of $\pi_1^{orb}(\overline\OO)$ on $\OO$ is the one induced by this homomorphism.
 In general, dividing $\OO$ by $\pi_1^\orb(\overline\OO)$ has to be understood in the sense of groupoids, see Example \ref{exa:actiongroupoid}. If $\overline{\OO}$ happens to be effective, then the homomorphism is injective and one recovers a usual quotient by a subgroup of $I_{std}(\OO)$.

In particular, Theorem \ref{main:fundamentalgroup} rules out any exotic group actions on the models from being compatible with any fixed point homogeneous structure. Moreover, note that the orbifold case allows a lot more fundamental groups than the manifold case since the fundamental group is not required to act freely on the universal cover. For example, since every finite group has finite-dimensional representations, among effective orbifolds with universal covers $\cp^n$ every finite group occurs as $\pi_1^{orb}$, while for positively curved \textit{manifolds} of dimension $2n$ the fundamental group can only be trivial or $\Z_2$.

Furthermore, as corollary to Theorem \ref{main:A} we get a classification of positively curved orbifolds whose isometry group has maximal possible rank:
\begin{maincor}\label{main:maxsymmetryrank}
    Let $\OO^n$ be a positively curved simply connected Riemannian orbifold with maximal symmetry rank, i.e. such that the rank of the isometry group is exactly $\left\lfloor \frac{n+1}{2} \right\rfloor$. Then $\OO$ is diffeomorphic to $\bb S^n$ or a possibly ineffective complex weighted projective space.
\end{maincor}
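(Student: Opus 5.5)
The plan is to reduce Corollary \ref{main:maxsymmetryrank} to Theorem \ref{main:A}, in the spirit of Grove--Searle's treatment of maximal symmetry rank for manifolds. Let $T$ be a maximal torus of the identity component of the isometry group, so $\dim T = \lfloor \frac{n+1}{2}\rfloor$. The aim is to show that some circle $S^1\subset T$ has a fixed point component $F$ of codimension two; then $S^1$ acts transitively on the unit normal circles of $F$, so the $T$-action, and already the $S^1$-action, is fixed point homogeneous. Theorem \ref{main:A} then gives that $\OO$ is $\bb S^n$, $\Ca\bb P^2$, or a weighted complex or quaternionic projective space. We finish by discarding the non-complex cases using rank. $\Ca\bb P^2$ has dimension $16$ and isometry rank $4<8$. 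For the quaternionic weighted spaces, I will inspect the actions in Table \ref{tab:examples} and check that a torus of rank $\lfloor\frac{n+1}{2}\rfloor$ cannot act on them with only finite ineffective kernel. This should be a direct computation, since those spaces have dimension $4m$ and the natural symmetry has rank $m+1$.

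The main step is producing the codimension-two fixed point component, and I would argue by induction on $n$. The low-dimensional cases are direct: a circle on a positively curved $2$-orbifold has fixed points, and in dimension $3$ a $T^2$-action has a circle subgroup with codimension-two fixed set. For the inductive step, take a singular isotropy circle. Since $\OO$ is compact and positively curved, a Berger-type argument shows that $T$ (or, in odd dimensions, a corank-one subtorus) has a fixed point or a circle orbit; this uses the orbifold version of the Berger fixed point theorem, i.e. Killing fields on even-dimensional positively curved orbifolds have zeros. Pass to a point $p$ with maximal isotropy torus and look at the isotropy representation of $T_p$ on the tangent cone $T_p\OO = \R^n/\Gamma_p$. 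Lifting to a finite cover of $T_p$ acting on $\R^n$ through a representation commuting with $\Gamma_p$, maximal rank forces this representation to be the standard action of $T^{\lfloor n/2\rfloor}$ up to finite cover. Hence there are coordinate circles whose fixed space is a real codimension-two subspace. The corresponding circle in $T$ then fixes a suborbifold of codimension two through $p$. The finite group $\Gamma_p$ can only reduce the fixed set to a quotient of a linear subspace, and I need to check it does not increase the codimension. It helps that $\Gamma_p$ normalizes the torus image, so it permutes the weight spaces, and some circle's fixed subspace is $\Gamma_p$-invariant up to this permutation.

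The step I expect to be hardest is this local linear-algebra argument in the presence of $\Gamma_p$ and ineffective kernels, together with ensuring the existence of a point with large isotropy. If the direct Berger argument is awkward, I would fall back on the orbifold analogue of the Grove--Searle induction. Take a circle with a fixed point component $F$ of maximal dimension. Frankel-type arguments and the Wilking connectedness principle, assumed valid for orbifolds via equivariant Morse theory on the frame bundle $\Fr \OO$, show that $T/S^1$ acts on $F$ with maximal symmetry rank. Induction then applies to $F$, and the codimension count $\dim T - 1 \le \lfloor\frac{\dim F+1}{2}\rfloor$ forces $\codim F = 2$.
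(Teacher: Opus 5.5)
Your reduction matches the paper's: find a circle $S^1\subset T$ with a fixed point component of codimension two, note that this circle acts fixed point homogeneously, and apply Theorem~\ref{main:A}. The difference is how you get the circle. The paper does not argue this. It cites \cite[Theorem~6.1]{HarSea17}, a result on positively curved Alexandrov spaces of maximal symmetry rank. That result applies to $\abs{\OO}$ directly, so charts, local groups and ineffective kernels never enter.

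You instead rerun Grove--Searle's argument inside orbifold charts. You take a $T$-fixed point (even $n$) or a circle orbit (odd $n$), then the isotropy representation of the identity component $K_p^0$ of the lifted isotropy group. By maximal rank its weights are linearly independent, so some circle in $K_p^0$ fixes a real codimension-two subspace. This route is valid and more self-contained. It needs two things.

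First, an orbifold Berger lemma. Berger's computation at a minimum of $\abs{X}^2$ goes through after lifting the Killing field to a chart, and the odd-dimensional circle-orbit statement reduces to it.

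Second, the local codimension check you flagged, which is harmless. $\Gamma_p$ is normal in $K_p$ and $K_p^0$ is connected, so $K_p^0$ centralizes $\Gamma_p$. Hence $\Gamma_p$ preserves each weight space rather than permuting them. As in the proof of Lemma~\ref{lem:fixedpoints}, the fixed set of the circle near $p$ is the image of the linear subspace $\Fix(C)$ in $\R^n/\Gamma_p$, still of codimension two. For ineffective $\OO$, run this on $\OO_\eff$, which has the same underlying space and torus action.

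The induction on $n$ plays no role. Your fallback count also does not work as stated: it forces $\codim F=2$ only if $T/S^1$ acts almost effectively on $F$. Getting that requires choosing $F$ of maximal dimension and applying the same linear algebra on the normal space of $F$.

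One step is wrong as written. You cannot discard $\Ca\bb P^2$ and $\hp(V)$ by comparing with the rank of $I_\std$. The hypothesis concerns the isometry group of the given metric on $\OO$, while Theorem~\ref{main:A} gives only an $S^1$-equivariant diffeomorphism. So the rank of $F_4$, or of the standard isometry group of $\hp(V)$, says nothing about which tori act isometrically on $(\OO,g)$.

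The repair is simpler than your plan. Theorem~\ref{main:A} classifies the pair consisting of the orbifold and the action. In Table~\ref{tab:examples}, a circle occurs as a fixed point homogeneous group only on $\bb S^n$ (as $\SO(2)$ on the first two coordinates) and on weighted complex projective spaces (Example~\ref{exa:fp:S1}). So with $G=S^1$ no other case arises.
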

A classification up to homeomorphism of the underlying space of $\OO$ was previously known as a corollary of the Alexandrov space result by Harvey and Searle \cite{HarSea17}. Here we obtain a classification up to diffeomorphism of the orbifold. Even disregarding questions of effectivity, recovery of the orbifold structure is a nontrivial problem as non-equivalent orbifolds may have homeomorphic underlying spaces. We actually prove a slightly sharper result (Corollary \ref{cor:fphom:maxsymmetry}): The diffeomorphism can be made equivariant with respect to the action of a maximal torus of the isometry group of $\OO$.
Here too the fundamental groups may be described as in Theorem \ref{main:fundamentalgroup}. However, the resulting list is much less rich since commuting with a maximal torus of $I_{std}(\OO)$ is a very restrictive condition.\\

The idea of the proof of Theorem \ref{main:A} is to first consider the effective case. Here the strategy is the same as in the manifold case \cite{Grove1997-xz}: Fixing a fixed point component $F$ where $G$ acts transitively on the normal space to $F$, first one uses the Alexandrov geometry of $\OO/G$ to show that there is a unique orbit $Gp$ at maximal distance to $F$. This gives $\OO$ the structure of a double disk bundle, i.e. $\OO$ is the union of the normal bundles of $F$ and of $Gp$ along their common boundary. In the manifold case one then proves a recovery result showing that $\OO=M$ is already uniquely determined by the bundle at $Gp$, and proceeds to classify from there. However, in the orbifold case such a recovery result simply isn't true, as exhibited by infinite families of weighted complex projective spaces where the bundles at $Gp$ are all equivariantly diffeomorphic. To find additional invariants that distinguish these classes, one has to consider the structure near $F$ instead of working only at $Gp$. The second main difficulty is recovery of the structure of the normal bundle at $Gp$. This involves extension problems involving the local groups of the orbifolds and the isotropy groups of the group action. Finally, after recovering the effective theorem, to conclude the general case we proceed to classify the ineffective extensions of the examples and show they are all compatible with fixed point homogeneous actions. This requires us to switch to the groupoid point of view: This is very much topological in nature; we show that in our setting ineffective extensions are classified by $H^2_{orb}(\OO)$ in suitable coefficients, which in the end reduces us to computations of orbifold homotopy groups using the fibrations associated to the examples. One then concludes Theorem \ref{main:A} by showing that all classes in the cohomological classification are covered by the examples. 

For Theorem \ref{main:fundamentalgroup}, one notes that the property of being fixed point homogeneous passes to the universal cover, reducing the problem to understanding actions of a finite group $L$ compatible with a fixed point homogeneous structure. There is a dichotomy here: If $L$ fixes the orbit $Gp$ as a set, one obtains a linear action of $L$ on the normal bundle of $Gp$. Then one has to recover how $L$ acts on the points of $Gp$. On the other hand, if $L$ does not preserve $Gp$, then one can show $F$ and $Gp$ must both be points, so $\OO$ is cohomogeneity $1$. One can then run an argument similar to the other case on the orbit in the middle, which must be fixed by $L$.

\subsection*{Structure}

After gathering some preliminaries in Section \ref{sec:prelim}, especially regarding some of the subtleties around orbifolds, the main body of the paper starts with a discussion of examples, in particular those that occur in Theorem \ref{main:A} but not in the manifold version. This will take up Section \ref{sec:examples}. The core work of the paper is done in Section \ref{sec:effective}, where the main structure results are proven in the effective case. Section \ref{sec:ineffective} deals then with the ineffective case. Here the description of orbifolds in terms of charts breaks down and one has to switch to the language of groupoids, see Section \ref{sec:groupoids}.

\subsection*{Acknowledgments}
The first named author would like to thank the Cluster of Mathematics M\"unster for enabling research stays in M\"unster in the summers of 2025 and 2026, where part of this work was carried out.

The second named author acknowledges support by the Alexander von Humboldt Foundation through Gustav Holzegel’s Alexander von Humboldt Professorship endowed by the Federal Ministry of Education and Research and by the Deutsche Forschungsgemeinschaft (DFG, German Research Foundation) under Germany's Excellence Strategy EXC 2044/2 –390685587, Mathematics Münster: Dynamics–Geometry–Structure.

The authors would also like to thank Christoph B\"ohm for many helpful comments regarding the exposition of the paper. 

\subsection*{AI disclosure}
The authors had access to publicly available AI systems. The paper contains neither mathematical results nor text generated by AI, but AI has been useful in identifying relevant work in the existing literature.
\tableofcontents
\section{Preliminaries}\label{sec:prelim}

Anyone unfamiliar who has tried to get into orbifolds quickly realizes a simple fact: There is a zoo of definitions, most of which are presumably equivalent in relevant cases, but which are so different from one another that large sections of the literature are almost inaccessible to people who don't have the right background. In practice, the approaches to orbifolds split into three camps:
\begin{enumerate}
    \item Orbifolds via charts \cite{Sat56, Car22} 

    This is the original, and "na\"ive" way of defining orbifolds, using charts modeled on quotients of $\R^n$ by finite group actions, analogously as one would manifolds. This is generally preferred by people coming to orbifolds from differential geometry. However, there are some subtleties that arise, in particular regarding smoothness of maps and the fact that the theory doesn't handle the ineffective case, which have led to

    \item Orbifolds as groupoids \cite{EhrLieCats,moerdijk2002orbifoldsgroupoidsintroduction, ALR07, dHF2metrics}

    For the price of a more technical approach, this language is able to resolve most of the problems of the classical definition. One of the significant features of this approach is that one orbifold may have different groupoid models that look very different from one another, with the relevant notion being that of "Morita equivalence". This can be useful in practical situations, as the model corresponding to finite groups acting on charts may not be well-adapted for, for example, best describing the quotient of a manifold by an action of a continuous group, but really one would prefer an orbifold to be a single object:

    \item Orbifolds as differentiable stacks \cite{Behrend2006DifferentiableSA, HeinlothNotesOnStacks, LermanOrbisAsStacks, dHFMetricsOnStacks}

    If equivalent atlases of charts are different concrete models of the same abstract manifold, groupoids are such "concrete models" of stacks. This is probably the most philosophically correct approach, but the language needed to formulate the definitions is highly categorical in nature and the theory is in a very early stage of development when it comes to questions of geometry. For example, a notion of geodesic has only been introduced in 2022 \cite{dHdMStackyGeodesics}.
\end{enumerate}

Our strategy of proof is as follows: We will deal separately with the effective and ineffective cases. For the effective version of the theorem, we will work in the language of charts, where it is easy to make sense of everything from a geometric point of view. Afterwards, we will switch to the groupoid perspective to find the possible ineffective extensions. 

\subsection{Effective Orbifolds}\label{sec:prelim:effective}
For our exposition, we follow Chapter 2 of \cite{KL14}. 
\begin{dfn}[Local models]
\mbox{}
\begin{enumerate}
    \item   A \emph{local model} is a pair $(\hat U, \Gamma)$, where $\hat U\subset \bb R^n$ is a connected open subset and $\Gamma$ a finite group acting effectively on $\hat U$ from the right. We denote  $U = \hat U/\Gamma$.
    \item A \emph{smooth map between local models} $f = (\hat f,\rho) \colon (\hat U, \Gamma)\to (\hat V, \Gamma^\prime)$ is given by a smooth map $\hat f \colon \hat U \to \hat V$ and a homomorphism $\rho \colon \Gamma \to \Gamma^\prime$, such that $\hat f$ is $\rho $-equivariant, i.e. $\hat f(x \cdot \gamma) = f(x) \cdot \rho(\gamma)$. 
    \item An \emph{embedding} of local models is a smooth map $f = (\hat f,\rho) \colon (\hat U, \Gamma)\to (\hat V, \Gamma^\prime)$ between local models, such that $\hat f $ is an embedding. 
\end{enumerate}
\end{dfn}

\begin{dfn}[Orbifold Atlas]\label{def:orbiatlas} Let $X$ be a paracompact topological Hausdorff space. An \emph{orbifold atlas} $\mathcal A$ for $X$ consists of 
\begin{enumerate}
    \item An open covering $\mathcal{U} = \lbrace U_i\setvert i \in I\rbrace $ of $X$,
    \item local models $\hat {\mathcal U} = \lbrace (\hat U_i, \Gamma_i)\setvert i \in I \rbrace $ with $\hat U_i \subset \bb R^n$,
    \item homeomorphisms $\phi_i \colon U_i \to \hat U_i/\Gamma_i$, such that
    \item let $U_1, U_2 \in \mathcal U$ and $p \in U_1 \cap U_2$, then there is a local model $(\hat U_3, \Gamma_3) \in \hat{\mathcal U}$ with $p \in U_3  = \hat U_3/\Gamma_3$ and embeddings $(\hat U_3, \Gamma_3) \to (\hat U_1,\Gamma_1)$ and $(\hat U_3, \Gamma_3) \to (\hat U_2,\Gamma_2)$.
 \end{enumerate}
 Two Atlases for $X$ are \emph{equivalent} if they are both contained in a third Atlas.
\end{dfn}

\begin{dfn}[Orbifold]
An \emph{effective orbifold} $\OO$ is a pair $(X,[\mathcal A])$, where $X$ is a paracompact topological Hausdorff space and $[\mathcal A]$ and equivalence class of atlases. We write $\vert \OO\vert \coloneqq X$  
\end{dfn}

Similarly it is possible to define effective orbifolds with boundary by assuming, that the orbifold is  locally modelled on open subsets $\hat U \subset [0, \infty) \times \bb R^{n-1}$. The boundary $\partial \OO$ is then an $n-1$-dimensional orbifold, where $\vert \partial\OO \vert$ consists of points, whose local lifts lie in $\lbrace 0 \rbrace \times \bb R^{n-1}$. Note that it is possible that $\partial \OO = \emptyset$, while $\partial \vert \OO \vert \neq \emptyset$. 

\begin{exa}
    Consider $X=[0, \infty)$. We may equip $X$ with two orbifold structures: $\OO_1:[0,\infty)=[0,\infty)$ as a nonsingular orbifold with boundary, and $\OO_2:[0,\infty)=\R/\Z_2$ as an orbifold without boundary that has isotropy $\Z_2$ at the origin.
\end{exa}

This example shows that the structure of an orbifold is strictly richer than that of its underlying topological space. In higher dimensions, one can produce such examples even when disallowing boundary.

\begin{dfn}\mbox{}
\begin{enumerate}
    \item Let $p \in \abs{\OO}$ and $(\hat U, \Gamma)$ a local model around $p$. Let $\hat p \in \hat U$ be a lift of $p$, then the \emph{local group} $\Gamma_p$ at $p$ is the stabilizer $\Gamma_p = \lbrace \gamma \in \Gamma \setvert p \cdot \gamma = p \rbrace$. Its isomorphism class is independent of the choices made.     
    \item We denote by $\OO_\reg$ all points $p\in \abs{\OO}$ such that $\Gamma_p = \lbrace e \rbrace$. $\OO_\reg$ inherits the structure of a smooth manifold and forms an open and dense subset in $\OO$.
    \item The \emph{singular set} $\Sigma(\OO)$ is given by all points $p \in \vert \OO \vert \bs \OO_\reg$
\end{enumerate}
\end{dfn}

\begin{exa}\label{intro:quotients}
    Let $M$ be a manifold and $G$ a compact Lie group, such that $G$ acts almost freely on $M$. If the principal isotropy group of this action is trivial, then the quotient $\OO = M/G$ has the structure of an effective orbifold. The regular orbits form the regular set of $\OO$ and the local groups correspond to the isotropy groups of the $G$-action. If the principal isotropy group is not trivial, then the quotient is a so-called \emph{ineffective orbifold}. We will discuss the notion of ineffective orbifolds in the next section.
\end{exa}

\begin{dfn}[smooth maps\footnote{We choose the na\"ive / bad definition of smooth for its simplicity. Smooth maps in this sense are not automatically smooth/good in the general (groupoid) sense. To illustrate some of the subtleties, orbifold vector bundles do not always have pullbacks under this definition. For a study of the different notions see \cite{BBSmoothnessNotions}. Note that for diffeomorphisms in particular the definitions agree \cite{KL14}.}]
 A \emph{smooth map} $f \colon \OO \to \mathcal P$ between orbifolds is given by a continouus map $\abs{f} \colon \abs{\OO} \to \abs{\mathcal P}$ with the property that for each $p \in \abs \OO$ there are 
 \begin{enumerate}
     \item Local models $(\hat U, \Gamma)$ and $(\hat V, \Gamma^\prime)$ around $p$ and $f(p)$ and a smooth map $\hat f \colon (\hat U, \Gamma) \to (\hat V, \Gamma^\prime)$ of local models, such that the following diagram commutes
     \begin{center}
              \begin{tikzcd}
         \hat U \arrow[r, "\hat f"] \arrow[d]& \hat V \arrow[d] \\
         U \arrow[r, "\abs{f}"] & V
     \end{tikzcd}
     \end{center}
 \end{enumerate}
 A \emph{diffeomorphism} is a smooth map   $f \colon \OO \to \mathcal P$ between orbifolds  with smooth inverse. 
\end{dfn}

\begin{dfn}[Group action]
 Let $G$ be a Lie group and $\OO$ be an orbifold. A smooth left action of $G$ on $\OO$ is a smooth map of orbifolds $$\Phi \colon G \times \OO \to \OO$$    
 such that the induced map $\abs{\Phi} \colon G \times \abs\OO \to \abs{\OO}$ is a continuous left action of topological spaces. 
\end{dfn}
Note that with this notion an element $g \in G$ acts as a diffeomorphism. 

\begin{dfn}
    An \emph{orbivector bundle} $\mathcal V \to \OO$ is locally modelled on $(V\times \hat U,\Gamma)$, where $(\hat U, \Gamma)$ is a local model of $\OO$.
\end{dfn}

 \begin{dfn}
     A smooth map $f \colon \OO \to \mathcal P$ gives rise to a \emph{differential} $Tf \colon T\OO \to T \mathcal P$ between the orbi tangent bundles in the following way: Let $p \in \abs\OO$, $(\hat U, \Gamma)$ a local model around $p$ and $(\hat V, \Gamma^\prime)$ a local model around $f(p)$. This gives rise to a $\Gamma_p$-equivariant linear map $T\hat f \colon T\hat U \to T\hat V$, which defines the differential. \\
     We say that $f$ is an \emph{immersion} (resp. \emph{submersion}) if $T_pf \colon T_p \OO \to T_{f(p)} \mathcal P$ is injective (resp. surjective) for all $p \in \abs\OO$. 
 \end{dfn}

\begin{dfn}
    A \emph{suborbifold} of $\OO$ is an orbifold $\OO^\prime$ together with an immersion $f\colon \OO^\prime \to \OO$ for which $\abs{f}$ maps $\abs{\OO^\prime}$ homeomorphically to its image. 
\end{dfn}

 As for manifolds we may define Riemannian metrics on an orbifold $\OO$ on the charts, asking in addition that the chart metrics are invariant under the action on the local model. Since orbifolds admit partitions of unity \cite{KL14}, every orbifold admits a Riemannian metric.     We note, that the normal bundle of a suborbifold is again an orbivectorbundle.

 \begin{theorem}\cite{Sat56}
     Let $(\OO,g)$ be an effective Riemannian orbifold. Then the \emph{Frame bundle} $\Fr\OO$ is a manifold, $\O(n)$ acts almost freely on $\Fr\OO$, and $\OO \cong \Fr\OO/\O(n)$.
 \end{theorem}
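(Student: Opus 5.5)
We construct $\Fr\OO$ chart by chart and glue. For a local model $(\hat U,\Gamma)$ with $\Gamma$-invariant metric $\hat g$, let $\Fr \hat U\to \hat U$ be the bundle of $\hat g$-orthonormal frames. It is a smooth manifold, a principal $\O(n)$-bundle with $\O(n)$ acting freely on the right. The group $\Gamma$ acts on $\Fr\hat U$ by the differential, $(x,e)\cdot\gamma=(x\gamma, d\gamma(e))$, commuting with the $\O(n)$-action. The local piece is then $\Fr U:=\Fr\hat U/\Gamma$.

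The first key step is that $\Gamma$ acts freely on $\Fr\hat U$. Suppose $\gamma$ fixes $(x,e)$. Then $\gamma$ is an isometry of $(\hat U,\hat g)$ fixing $x$ with $d\gamma_x=\Id$. An isometry of a connected Riemannian manifold is determined by its value and differential at one point, since it commutes with $\exp_x$ and its fixed set is open and closed. So $\gamma$ acts as the identity, and effectivity of the local model gives $\gamma=e$. This is exactly where effectivity enters, and I expect it to be the conceptual crux. Since $\Gamma$ is finite and acts freely and properly, $\Fr U$ is a smooth manifold. The $\O(n)$-action descends because it commutes with $\Gamma$.

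The second step is gluing. An embedding of local models $(\hat f,\rho)\colon(\hat U_3,\Gamma_3)\to(\hat U_1,\Gamma_1)$ is a local isometry for the chart metrics, since these are restrictions of the single metric $g$. Its differential therefore gives an $\O(n)$-equivariant open embedding $\Fr\hat U_3\to \Fr\hat U_1$ intertwining $\Gamma_3$ and $\Gamma_1$ via $\rho$. The induced map $\Fr U_3\to \Fr U_1$ is a diffeomorphism onto an open subset. The one subtle point is that $\rho$ is an isomorphism onto the stabilizer of $\hat f(\hat U_3)$, and that the induced map is injective. Injectivity holds because two lifts of an embedding differ by a unique element of $\Gamma_1$, the standard lemma for effective local models in \cite{KL14}. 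Via axiom (4) of Definition \ref{def:orbiatlas}, these maps glue the $\Fr U_i$ into a space $\Fr\OO$. This space is Hausdorff and paracompact because it is a locally trivial fibre bundle over the Hausdorff paracompact space $\abs\OO$ with compact fibre $\O(n)/\Gamma_p$. Hence it is a smooth manifold carrying a smooth $\O(n)$-action.

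The third step concerns isotropy and the quotient. Over $p$ with lift $\hat p$, the stabilizer in $\O(n)$ of a frame $[(\hat p,e)]$ consists of those $A$ with $eA=d\gamma(e)$ for some $\gamma\in\Gamma_p$. It is therefore isomorphic to $\Gamma_p$, which is finite, so the action is almost free. Finally, $\Fr U/\O(n)=(\Fr\hat U/\O(n))/\Gamma=\hat U/\Gamma$. The orbifold chart structure induced on $\Fr\OO/\O(n)$ by slices (Example \ref{intro:quotients}) has slice $\hat U$ with slice group $\Gamma_p$ acting as on $\hat U$. This matches the local models of $\OO$ compatibly with the gluing, which gives $\OO\cong\Fr\OO/\O(n)$ as orbifolds.

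The main obstacle is the gluing and Hausdorffness bookkeeping in the second step. It hinges on the uniqueness-of-lifts lemma for effective charts, and I would invoke \cite{KL14} for it rather than reprove it.
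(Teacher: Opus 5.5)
The paper gives no proof of its own here and simply cites Satake. Your construction is the standard argument behind that citation, and it is correct: $\Gamma$ acts freely on orthonormal frames because of effectivity together with the fact that an isometry of a connected manifold is determined by its $1$-jet; the pieces are glued through differentials of chart embeddings; stabilizers are isomorphic to $\Gamma_p$; and slices reproduce the local models of $\OO$. The only slip is calling $\Fr\OO\to\abs\OO$ a locally trivial fibre bundle, since the fibres $\Gamma_p\backslash\O(n)$ genuinely vary with $p$. Hausdorffness and paracompactness still follow, because the projection is proper (closed with compact fibres) onto a paracompact Hausdorff space and each piece $\Fr U_i$ is Hausdorff.
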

\begin{rem}
    Let $\OO$ be a Riemannian orbifold and $\mathcal N$ be a suborbifold. One would expect that $\Fr\OO\vert _{\mathcal N}/\O(n)$ always recovers the orbifold structure of $\mathcal N$, but this is not the case, since the $\O(n)$-action on $\Fr\OO\vert_{\mathcal{N}}$ might have nontrivial principal isotropy group $\Gamma$. Only after dividing out $\Gamma$ at the local groups we arrive at the effective orbifold $\mathcal N$. This illustrates again the need for a notion of orbifolds which have isotropy at every point. 
\end{rem}
There is a slice theorem for orbifolds, since one can lift the action to the frame bundle, where the slice theorem for manifolds applies. 

The frame bundle allows us to define topological invariants for orbifolds that do not only rely on the underlying space. Let $\OO$ be a Riemannian orbifold and $$\O (n) \to E\O(n) \to B\O(n)$$ the classifying bundle of $O(n)$. We consider the Borel construction
$$B\OO \coloneqq E\O(n) \times_{\O(n)} \Fr\OO.$$
Now we may define the orbifold invariants to be the topological invariants of the space $B\OO$. In particular, we have 
\begin{enumerate}
    \item Orbifold homotopy groups: $\pi_n^\orb(\OO) \coloneqq \pi_n(B\OO)$,
    \item Orbifold homology: $H_\ast^\orb(\OO,R) \coloneqq H_\ast(B\OO,R)$,
    \item Orbifold cohomology: $H_\orb ^\ast(\OO,R)\coloneqq H^\ast(B\OO,R)$, \ldots 
\end{enumerate}
We note that there is a natural map $\pi_1^\orb(\OO) \to \pi_1(\abs\OO)$, which is always surjective. 

\subsection{Orbifolds as Groupoids}\label{sec:groupoids}
This subsection is the foundation for the ineffective case covered in section \ref{sec:ineffective}. Readers, that are only interested in the effective case and the geometric can directly skip to the following section.
Before we set out to outline the general theory, we have a look at an example as to why a chart-based model struggles:

Take the simplest case, where we have a group $G$ acting on a closed manifold $M$ with every isotropy finite. Then, in the general as in the effective case, we want to define the topological invariants of the "quotient orbifold" so that they equal those of the Borel construction $M\times_G EG$.\footnote{In the chart model, the frame bundle is exactly the construction that brings every $\OO$ into this form} Now look at the following two examples:

$$\S^1\times \Z_k\acton \bb S^3, (z,a)\star p=zp,\text{ and } \S^1\acton \bb S^3, z\star p=z^kp.$$

Both of these have isotropy $\Z_k$ at every point, and (as manifolds) the quotient of each is a smooth $S^2$. However, $\pi_1(S^3\times_G EG)=\Z_k$ for the first example, and, perhaps surprisingly, $\pi_1(S^3\times_G EG)=0$ in the second. 

This means any good general definition of orbifold must be able to distinguish the two examples. On the other hand, the underlying space is simply a smooth $S^2$, and it is hard to come up with charts where it is possible to act trivially by $\Z_k$ on each chart, but in two different ways. The solution is to move away from basing our construction around an "underlying topological space", and instead carry along a bit more data:

\begin{dfn}\mbox{}
\begin{enumerate}
    \item   A \emph{topological groupoid} $G_1 \garrow G_0$ is a small category with a space of objects $G_0$ and a space of arrows $G_1$, so that each arrow is invertible and such that the five following structure maps are continuous:
    \begin{enumerate}
        \item The source map $s \colon G_1 \to G_0$, assigning each arrow $g \in G_1$ its source $s(g) \in G_0$
        \item The target map $t \colon G_1 \to G_0$, assigning each arrow $g \in G_1$ its target $t(g) \in G_0$. 
        \item The composition map $m \colon G_1 \,{}_s\times_tG_1 \to G_0$, which is associative.
        \item The unit map $\Id \colon G_0\to G_1$ assigning to each $g \in G_0$ the identity element $\Id_g \in G_1$.
        \item The inverse map $i \colon G_1 \to G_1$, assigning to each arrow $g \in G_1$ its inverse $i(g)=g^{-1}$
    \end{enumerate}
    \item A \emph{smooth Groupoid} or \emph{Lie Groupoid} is a topological groupoid $G_1\garrow G_0$, such that $G_0$ and $G_0$ are smooth manifolds, the structure maps are smooth and the source and target maps are submersions.
\end{enumerate}
\end{dfn}

While this is intimidating at first glance, the intuition is simple: $G_0$ gives us points, and an arrow (that is, an element of $G_1$) with source $p$ and target $q$ indicates that $p$ and $q$ should be identified. A few examples:

\begin{exa}
    Let $M$ be a smooth manifold. Then $M$ can be viewed as a Lie groupoid via $M\garrow M$ with source and target map equal to the identity
\end{exa}

Slightly less trivially, the atlas definition of a manifold also fits into this framework

\begin{exa}\label{exa:manifoldatlas}
    Let $\{\varphi_\alpha :U_\alpha\subset M \to V_\alpha\subset \R^n\}$ be an atlas of a manifold $M$ with transition maps $\varphi_{\alpha \beta}:V_{\alpha \beta}\to V_{\beta \alpha}$, where $V_{\alpha \beta}:=\varphi_\alpha(U_\alpha \cap U_\beta)$.

    Then $\bigcup V_{\alpha \beta}\garrow \bigcup V_\alpha$, with $s:V_{\alpha \beta}\to V_\alpha$ the inclusion and $t:V_{\alpha \beta}\to V_\beta$ equal to $\varphi_{\alpha \beta}$, is a Lie groupoid equivalent (see def \ref{def:moritaeq}) to $M\garrow M$.
\end{exa}

\begin{exa}\label{exa:actiongroupoid}
    Let $M$ be a manifold and $G$ a compact Lie group acting smoothly on $M$. Then $M/G \coloneqq G\times M \garrow M$, with source map $s(g,p) = p$ and target map $t(g,p) =gp$, is a Lie groupoid  called the \emph{action groupoid}.
\end{exa}

For the purpose of encoding an orbifold in the sense of section \ref{sec:prelim:effective} as a groupoid, there are two prime ways: Using the action groupoid associated to the action $\O(n)\acton \Fr(\OO)$ on the frame bundle, or straight from an atlas, in the spirit of example \ref{exa:manifoldatlas}

\begin{exa}
    Starting with an orbifold atlas $\{(\hat U_i, \Gamma_i)\}$, set $G_0=\bigcup\hat U_i$. The starting point for $G_1$ is the union of the action groupoids $\bigcup \Gamma_i\times\hat U_i\garrow \bigcup\hat U_i$. As in the manifold case, one now needs to add arrows between points in different charts corresponding to the same geometric point. These are built from the embeddings of the last condition in definition \ref{def:orbiatlas}, but the details are a bit technical. See \cite{MoerdijkPronk97}.
\end{exa}

Each variant has its upsides. While often it is much easier to compute things for action groupoids, the groupoid obtained from an atlas has other nice properties, for example that there are only finitely many arrows between any given two points. More precisely, it is proper and \'etale (see definition \ref{def:properetale}).

Finally, we need some good notion of equivalence. For example, for $G\acton G\times M$ acting by multiplication on the first factor, the quotient should not depend on $G$, and more generally, for any orbifold the different ways of constructing a groupoid representing it should all be equivalent.

\begin{dfn}\label{def:moritaeq}
\mbox{}
\begin{enumerate}
    \item A morphism\footnote{the obvious definition is the correct one} of groupoids $\Phi \colon \Groupoid \to H_1 \garrow H_0$ is called an equivalence if
    \begin{enumerate}
        \item The map $$t \circ \pr_1 \colon H_1 {}_s \times_\Phi G_0 \to H_0$$ defined on $H_1 {}_s \times_\Phi G_0 = \lbrace (h,y)\vert h \in H_1, y \in G_0, s(h) = \Phi(y)\rbrace $ is a surjective submersion.
        \item The square
        \begin{center}
            \begin{tikzcd}
              G_1 \arrow[d,"{(s,t)}" left] \arrow[r,"\Phi"] &  H_1 \arrow[d, "{(s,t)}"] \\
                    G_0 \times G_0 \arrow[r, "\Phi \times \Phi"]  & H_0 \times H_0
            \end{tikzcd}
        \end{center} 
        is a fibered product of manifolds.\footnote{Explicitly, the induced map $G_1 \to \{(h,x,y)\in H_1\times G_0\times G_0 \setvert h:\Phi x\to\Phi y\}$ is a diffeomorphism}
    \end{enumerate}
    \item Two groupoids $G, H$ are \emph{Morita-equivalent} if there is a diagram of groupoids $G \leftarrow Z \rightarrow H$ where both arrows are equivalences.
\end{enumerate}
\end{dfn}

To finally define orbifolds in this language, we need two adjectives:

\begin{dfn}\label{def:properetale}
    Let $\Groupoid$ be a Lie groupoid. 
    \begin{enumerate}
        \item $\Groupoid$ is \emph{proper} if $(s,t)\colon G_1 \to G_0 \times G_0$ is a proper map.
        \item $\Groupoid$ is \emph{\'etale} if $s,t$ are local diffeomorphisms
    \end{enumerate}
\end{dfn}

\begin{dfn}
   An \emph{orbifold} $\OO$ is a Lie groupoid Morita equivalent to\footnote{definitions differ as to whether an orbifold should be a proper \'etale, be equivalent to a proper \'etale, or be the equivalence class of a proper \'etale Lie groupoid. We will often describe orbifolds as action groupoids of continuous groups.} a proper \'etale Lie groupoid $G_1\garrow G_0$. 
\end{dfn} 

Note that being proper is invariant under Morita equivalence, but being \'etale is not. For example action groupoids are only \'etale if the group is discrete.

\begin{exa}
    Let $G \acton M$ be an almost free group action by a compact Lie group. Then the action groupoid defines an orbifold.
\end{exa}
\begin{proof}
    By repeated use of the slice theorem, we may cover $M$ by a locally finite union of open sets, each of which is a neighborhood of some orbit $G p_i$, having the form $U_i \cong (G\times D_i)/G_{p_i}$, where $D_i$ is a disk normal to $G p_i$ at $p_i$.
    Let now $\hat \OO$ be the subgroupoid of the action groupoid with $\hat G_0=\bigcup D_i$, with all arrows with admissible endpoints (explicitly, $\hat G_1=\{(p,g)|p\in \hat{G_0}, gp\in \hat{G_o}\}$). Then $\hat \OO$ is \'etale and the inclusion $\hat\OO \into \OO$ is an equivalence.
\end{proof}

\begin{theorem}[\cite{Pardon_2022}] Any orbifold groupoid is morita equivalent to an action groupoid by an almost free Lie group action.    
\end{theorem}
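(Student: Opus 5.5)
The idea is to reduce the statement to the existence of a single \emph{faithful} orbivector bundle. Let $\Groupoid$ be a proper \'etale Lie groupoid representing the orbifold $\OO$. Suppose there is a vector bundle $E\to G_0$ with a $G_1$-action such that for every $x\in G_0$ the isotropy group $G_x=\{g\in G_1 \setvert s(g)=t(g)=x\}$ acts faithfully on the fibre $E_x$. Choose a $G_1$-invariant Hermitian metric on $E$, obtained by averaging, which is possible by properness. Let $P\to G_0$ be the unitary frame bundle of $E$.

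Then $G_1$ acts on $P$ and $\U(n)$ acts on $P$, and the two actions commute. Faithfulness implies that $G_1$ acts freely on $P$. Since the groupoid is proper and \'etale, $M\coloneqq P/G_1$ is a Hausdorff smooth manifold, and it carries an induced $\U(n)$-action. This action is almost free, because its stabilizers embed into the finite groups $G_x$. I would then exhibit the Morita equivalence through the groupoid $Z=(G_1\times\U(n))\ltimes P$. The projection $Z\to\U(n)\ltimes M$ is an equivalence because $G_1$ acts freely and properly on $P$. The projection $Z\to(\Groupoid)$ is an equivalence because $P\to G_0$ is a principal $\U(n)$-bundle. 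In each case I would verify the two conditions of Definition \ref{def:moritaeq} directly; this is routine.

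The real content is the construction of $E$, and I would proceed in three steps.

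\textbf{Local step.} For $x\in G_0$, the \'etale and proper structure gives a neighbourhood $U_x$ of $x$ on which the restricted groupoid is the action groupoid $G_x\ltimes U_x$, with $U_x$ a linear slice. A faithful representation $V_x$ of the finite group $G_x$ then gives an equivariant bundle $U_x\times V_x$. It is faithful at $x$, and it remains faithful on $U_x$, since the isotropy groups there are subgroups of $G_x$.

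\textbf{Reduction to finitely many pieces.} Faithfulness is an open condition, and it is preserved under direct sums with arbitrary further bundles. Hence it suffices to produce, for each $x$, a global orbibundle whose restriction to a smaller neighbourhood of the orbit of $x$ contains $U_x\times V_x$ as a summand. If the orbit space $\abs{\OO}$ is compact, finitely many $x$ suffice and I take the direct sum of the corresponding global bundles. In the noncompact case I would use a locally finite cover together with an exhaustion, passing to a (countable, locally finite) sum. At that point care is needed to keep the rank bounded, and I would try to arrange this by a dimension argument in the style of coloring the cover with $\dim+1$ colours.

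\textbf{Extension step (main obstacle).} One must show that the bundle $U_x\times V_x$, defined only near the orbit of $x$, extends to a global orbibundle, possibly after adding a summand. Unlike functions or sections, vector bundles cannot simply be cut off by a bump function. My first attempt would be a clutching argument. Near the boundary of a slightly shrunk neighbourhood, try to trivialize $U_x\times V_x\oplus W$ equivariantly, where $W$ is a complementary representation chosen so that $V_x\oplus W$ restricts, on the relevant isotropy subgroups near the boundary, to something extendable outward. Then glue with a trivial-type bundle outside. I would organise this as an induction over the isotropy strata: at each stage the obstruction lies in the equivariant $K$-theory of a collar, and it should vanish after stabilisation by representations induced from the local groups. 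I expect the hardest part of the whole argument to be making this stabilised extension work uniformly, in particular in the ineffective case, where the generic isotropy is nontrivial and must also be detected by $E$.
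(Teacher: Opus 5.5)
The paper does not prove this statement; it only quotes \cite{Pardon_2022}. Your first half is the standard reduction and is correct. A bundle $E$ on which every $G_x$ acts faithfully makes $G_1$ act freely on the unitary frame bundle $P$, and $P/G_1$ with its almost free $\U(n)$-action is Morita equivalent to $\Groupoid$.

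\textbf{The extension step.} The existence of such an $E$ is the entire content of Pardon's theorem, and your extension step does not supply it: it ends with the hope that an obstruction ``should vanish after stabilisation''. The mechanism you sketch also cannot work as stated.

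In the effective case nothing has to be constructed, because $G_x$ already acts faithfully on $T_xG_0$ (this is Satake's frame bundle, quoted in Section 2). The whole difficulty is the ineffective kernel $K_x=\ker(G_x\to GL(T_xG_0))$, which is present at every point of a component. So isotropy does not drop near $\partial U_x$, and there is no region where your bundle can be clutched to a ``trivial-type'' one.

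The basic test case is a $\Z_k$-gerbe $\NN$ over a closed manifold $M$ with class $\beta\in H^2(M;\Z_k)$. It is a proper \'etale groupoid with constant isotropy $\Z_k$, $\NN_{\eff}=M$, and a single isotropy stratum, so your induction over strata is vacuous. The summand of a bundle on $\NN$ on which $\Z_k$ acts through a character $\chi$ amounts to a finite-rank twisted bundle on $M$. Equivalently, it is a $\mathrm{PU}(r)$-bundle whose Dixmier--Douady class is the image of $\beta$ under $H^2(M;\Z_k)\xrightarrow{\chi}H^2(M;\underline{\mathrm{U}(1)})\cong H^3(M;\Z)$. Faithfulness forces the characters that occur to generate the dual of $\Z_k$; tensoring the corresponding twisted bundles then shows that the statement contains Serre's theorem. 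That theorem says every torsion class in $H^3$ of a finite complex is realized by a finite-rank projective bundle. Its proof is a genuinely global homotopy-theoretic argument in which $r$ must grow. It is not a collar obstruction in equivariant $K$-theory that disappears after adding induced representations.

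\textbf{The bounded-rank step.} This step fails in the generality you state it. Take the $0$-dimensional proper \'etale groupoid $\coprod_{k\ge 1}\mathrm{pt}/(\Z_2)^k$. No finite-rank bundle on it is faithful at every point, because $(\Z_2)^k$ has no faithful representation of dimension $<k$. Correspondingly, it is not Morita equivalent to an almost free action of any compact Lie group, since elementary abelian $2$-subgroups of such a group have bounded rank. Colouring a cover controls how many pieces overlap, not the ranks of the pieces. So some finiteness hypothesis, such as compactness or bounded isotropy, has to enter, or you must allow non-compact Lie groups with infinitely many components.

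What is missing, in short, is a global construction of faithful orbibundles in the ineffective case. That construction is exactly what \cite{Pardon_2022} provides; it cannot be reduced to local pieces glued by clutching.
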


At this point we should say a few words about recovery of the classical data. 

\begin{dfn}
    The \emph{geometric realization}, or coarse quotient, of a Lie groupoid $\OO$ is the topological space $\abs{\OO}=G_0/\sim$, where $x\sim y:\iff$ there is an arrow $G_1 \ni g:x\to y$.

    For $\bar x \in \vert \OO\vert$ let $x \in G_0$ be a representative. Then the \emph{local group} is $\Gamma_x \coloneqq G_x$. The set of principal $G_1$-orbits in $G_0$ is called the \emph{regular part} of $\OO$. All other points are called \emph{singular}. Let $p \in \OO_\reg$. We call $\OO$ \emph{effective} if $\Gamma_p = 1$ and otherwise \emph{ineffective}.
\end{dfn}

\begin{dfn}
    A strict action of a Lie group $L$ on a Lie groupoid $\Groupoid$ consists of two action $L \times G_i \to G_i$ for $i = 0,1$, such that the source and target maps are equivariant.  
\end{dfn}
A strict action on a groupoid induces a topological action on the coarse quotient. Riemmanian metrics on groupoids have been defined in \cite{dHF2metrics}. These induce a metric on the coarse quotient. For this work it is only relevant, that the full quotient space $\vert \OO\vert/G$ has positive sectional curvature in the Alexandrov sense. Hence, we will not discuss further curvature conditions. Strict isometric actions on Riemannian groupoids have been discussed in \cite{Herrera-Carmona2023}. 

Since passing to the frame bundle of the geometric realization runs into the issue of forgetting all ineffective data, we also need a different way to obtain our topological invariants. However, everything turns out to be fine

\begin{theorem*}
    For every groupoid $\OO$, there is a classifying space\footnote{The nomenclature comes from the fact that for groupoids $(G\garrow\{\star\})$, one recovers the classical classifying space of the group $G$} $B\OO$. If two groupoids are Morita-equivalent, then their classifying spaces are homotopy equivalent. In particular, if $\OO$ is equivalent to an action groupoid $G\acton M$, $B\OO$ is homotopy equivalent to the Borel construction of any such action.
\end{theorem*}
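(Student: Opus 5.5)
The plan is to construct $B\OO$ as the classifying space of the topological category $\OO$, then show that Morita equivalences induce homotopy equivalences and that for action groupoids one recovers the Borel construction.

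\emph{Construction.} Take the nerve of $\OO$: the simplicial space $N_k\OO = G_1\,{}_s\times_t \cdots {}_s\times_t G_1$ ($k$ factors), with $N_0\OO = G_0$ and face maps given by composition and by $s,t$. Define $B\OO$ as its (fat) geometric realization $\|N_\bullet \OO\|$. Using the fat realization avoids cofibrancy issues with degeneracies. For $G\garrow\{\star\}$ this is the usual bar construction $BG$, which explains the footnote.

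\emph{Morita invariance.} It suffices to treat a single equivalence $\Phi\colon G\to H$ in the sense of Definition \ref{def:moritaeq}, since Morita equivalence is a zigzag of these. The fibered-product condition (b) says $\Phi$ is fully faithful on arrows. Condition (a) says $\Phi$ is essentially surjective in a locally split way: $t\circ \pr_1$ is a surjective submersion, so it admits local sections. I would then form the "weak pullback" groupoid $P = H_1\,{}_s\times_\Phi G_0$ over $H_0$. The cleanest route is to pass through the Čech groupoid of the surjective submersion $t\circ\pr_1$.

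\emph{Key lemma.} If $U\to H_0$ is a surjective submersion (open cover or submersion with local sections), then the pullback groupoid $H_U$ has $B H_U\simeq BH$. For each fixed simplicial degree, the map $N_k H_U\to N_kH$ is the realization of a Čech nerve of a map with local sections, hence a homotopy equivalence (the standard Segal argument, using paracompactness and partitions of unity). Realization of a levelwise equivalence of simplicial spaces is an equivalence for the fat realization. Then $G$ and $H$ both receive equivalences from a common Čech-type groupoid, and condition (b) identifies it with the pullback of $G$ as well, giving $BG\simeq BH$.

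\emph{Action groupoids.} For $G\times M\garrow M$, the nerve is $N_k = G^k\times M$ with the bar-construction face maps. Its realization is $B(\ast,G,M)\cong EG\times_G M$, the Borel construction. This is standard for the bar construction when $G$ is a compact Lie group, since the fat and thin realizations agree up to homotopy, $EG=B(\ast,G,G)$ being a well-pointed model. Combined with Morita invariance, any groupoid equivalent to $G\ltimes M$ has classifying space homotopy equivalent to its Borel construction. Hence for an effective orbifold, via the frame bundle groupoid $\O(n)\ltimes \Fr\OO$, this agrees with the $B\OO$ of Section \ref{sec:prelim:effective}.

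\emph{Main obstacle.} The hardest step is the key lemma: showing that the nerve of the pullback along a surjective submersion is levelwise equivalent, i.e.\ that the realization of the Čech nerve $\check C(U\to X)_\bullet$ maps by a homotopy equivalence to $X$. I would first try Segal's partition-of-unity argument, using that submersions admit local sections and that $H_0$ is a paracompact manifold. The remaining steps are formal.
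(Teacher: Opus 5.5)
The paper gives no proof of this statement; it is quoted as standard background, so the question is whether your outline stands on its own. Most of it does. Your reduction of an equivalence $\Phi\colon G\to H$ to two pullbacks is correct: $P=H_1\,{}_s\times_\Phi G_0\to G_0$ is a pullback of $s$, hence a surjective submersion with a global section, and condition (b) identifies $H_P$ with $G_P$. The bar-construction identification for action groupoids is also standard.

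The gap is in your key lemma. The map $N_kH_U\to N_kH$ is the base change of $U^{k+1}\to H_0^{k+1}$, and in general it is not a homotopy equivalence. Already for $k=0$ it is $U\to H_0$ itself. For instance, let $H$ be the trivial groupoid of $S^1$ and $U$ a disjoint union of arcs covering $S^1$. Then $N_0H_U=U$ is homotopy discrete, while $N_0H=S^1$. So $N_\bullet H_U\to N_\bullet H$ is not a levelwise equivalence. The ``i.e.'' in your last paragraph conflates two different statements: Segal's lemma says that $\|\check C_\bullet(Y\to X)\|\to X$ is an equivalence, not that $Y\to X$ is one. The equivalence $BH_U\simeq BH$ only appears after realization.

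What is missing is the second half of the standard bisimplicial argument. Put $X_{k,m}=N_k\bigl(H_{U^{[m+1]}}\bigr)$, where $U^{[m+1]}$ is the $(m+1)$-fold fibre product over $H_0$. For fixed $k$ this is exactly the \v{C}ech nerve of $N_kH_U\to N_kH$, which has local sections. So realizing in $m$ first and applying Segal's lemma levelwise in $k$ gives $\|X\|\simeq BH$.

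Realizing in $k$ first, you must still show that $m\mapsto BH_{U^{[m+1]}}$ is equivalent to the constant simplicial space $BH_U$. This holds for the following reasons:
\begin{enumerate}
\item The diagonals $U\to U^{[m+1]}$ form a simplicial map from the constant object.
\item Each induced functor $H_U\to H_{U^{[m+1]}}$ has the first projection as a quasi-inverse. The natural isomorphism is continuous and is given by identity arrows of $H$, since all $u_i$ lie over the same point of $H_0$.
\item Continuous natural transformations induce homotopies on classifying spaces, so each level is a homotopy equivalence.
\end{enumerate}
Hence $\|X\|\simeq BH_U$, compatibly with the map induced by $H_U\to H$.

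Without this step, or a substitute such as a direct shrinkability argument for $BH_U\to BH$, the key lemma is not proved. The partition-of-unity lemma you single out as the main obstacle is actually the easier half.
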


The actual construction is very general, and works for categories much more general than Lie groupoids. However, this is more than enough for our purposes, which are to define invariants:

\begin{dfn}
    Given an orbifold $\OO$, the orbifold invariants $\pi_*^{orb}(\OO)$, $H^*_{orb}(\OO)$, etc. are defined to be the regular invariants $\pi_*(B\OO), H^*_{\orb}(B\OO)$, etc. of its classifying space.
\end{dfn}

\section{Examples}\label{sec:examples}
In this section we discuss the spaces occuring in Theorem \ref{main:A}, with an emphasis placed on the weighted complex and quaternionic projective spaces, which are the only new examples in Theorem \ref{main:A} in comparison to the manifold classification \cite{Grove1997-xz}. 

\subsection{Spheres and the Cayley Plane}
These examples are well-known, but we include a description of the possible fixed point homogeneous actions for the sake of having everything in one place.

\begin{exa}\label{exa:fphspheres}
    Let $G\acton \bb S^k$ be any transitive action, and $l\ge 1$. Then one obtains a fixed point homogeneous action $G\acton \bb S^{k+l}$ by letting $G$ act on the first $k+1$ coordinates
\end{exa}

By the classification in the manifold case, every fixed point homogeneous action on $\bb S^n$ can be brought into this form. The transitive actions on spheres are classified, the list is given in Table \ref{tab:fixhom:homospheres}.
{
\renewcommand{\arraystretch}{1.3}
\begin{table}[!h]
      \begin{center}
          \begin{tabular}{|c|c|c|c|c|c|}
\hline  &$\gG$          &$\gH$ &$\gG/\gH$&Kernel $\gC$ &$N(\gH)/\gH$           \\
\hline \hline
1. &$\SO(n+1)$ &$\SO(n)$ &$\Sph^{n}$ &$\{e\}$& $\Z_2$ (for $n\ge 2$)\\
\hline

2.&$\SU(n+1)$ & $\SU(n)$& $\Sph^{2n+1}$& $\{e\}$ &$\gS^1$ (for $n\ge 2$)  \\
\hline

2.1.&$\U(n+1)$ & $\U(n)$ & $\Sph^{2n+1}$ &$\{e\}$ & $\gS^1$ \\
\hline

3.&$\Sp(n+1)$ & $\Sp(n)$ &  $\Sph^{4n+3}$& $\{e\}$ & $\gS^3$ \\
\hline

3.1.&$\Sp(n+1)\U(1)$ & $\Sp(n)\Delta\U(1)$&$\Sph^{4n+3}$& $\Delta \Z_2$&$\gS^1$
 \\
\hline
3.2.&$\Sp(n+1)\Sp(1)$ & $\Sp(n)\Delta\Sp(1)$& $\Sph^{4n+3}$&$\Delta \Z_2$& $\Z_2$
\\
\hline
4.&$\Spin(9)$   &$\Spin(7)$& $\Sph^{15}$& $\{e\}$ & $\Z_2$  \\
\hline

5.&$\Spin(7)$   &$\gG_2$ &$\Sph^7$  & $\{e\}$ & $\Z_2$ \\
\hline

6.& $\gG_2$   &$\SU(3)$  &$\Sph^6$  &$\{e\}$  & $\Z_2$\\
\hline
          \end{tabular}
       \end{center}
      \caption{Transitive actions on spheres.}
      \label{tab:fixhom:homospheres}
\end{table}
}

For the Cayley plane, one only has one fixed point homogeneous action:

\begin{exa}\label{exa:fphcayley}
    Consider the Cayley plane $\Ca\bb P^2 = F_4/\Spin(9)$. The natural action of $\Spin(9)$ on the left clearly fixes a point. This is the unique fixed point homogeneous action on $\Ca\bb P^2$.
\end{exa}

In fact, this action and its iterated isotropy representations are the source of the exceptional homogeneous spheres 4. 5. and 6. in Table \ref{tab:fixhom:homospheres}.

\subsection{Weighted projective spaces}
\subsubsection{Weighted complex projective spaces}
Let $p = (p_0,\ldots, p_n) \in \bb Z^{n+1}$ with all $p_i$ nonzero. The homomorphism \begin{align*}
    \rho_p \colon \S^1 \to \U(n+1), z \mapsto \diag(z^{p_0}, \ldots, z^{p_n})
\end{align*}
induces an almost free $\S^1$-action on $\bb S^{2n+1}$. 
We denote the quotient orbifold by 
$$\bb C \bb P^n[p] \coloneqq \bb C \bb P^{n}[p_0,\ldots, p_n] \coloneqq \bb S^{2n+1}/\S^1.$$
We call $\bb C \bb P^n[p_0,\ldots, p_n]$ a \emph{weighted complex projective space}. It is an effective orbifold if and only if $\gcd(p_0,\ldots,p_n) = 1$. The submersion metric has positive sectional curvature.\\
There is slightly different description of weighted complex projective spaces, which allows us to compute its isometry group more easily. This construction is more similar to the definition of quaternionic projective spaces in the next subsection:\\
Let $W$ be an orthogonal representation of $\S^1$ without trivial irreducible subrepresentations. Then $W = \bigoplus_{i = 0}^r k_i W_{q_i}$ for some $k_i \in \bb N$ and $q_i \in \bb Z$, where $W_{q_i}$ is an irreducible $\S^1$-representation with weight $q_i$ and $k_i$ its multiplicity in $W$. We set
$$\bb C \bb P(W) \coloneqq \bb S(W)/\S^1$$
The action of $\S^1$ on the unit sphere $\bb S(W)$ is almost free and hence $\bb C \bb P(W)$ is an orbifold. The principal isotropy group of the $\S^1$-action is trivial if and only if $\gcd(q_0, \ldots, q_r) =1$.  \\
Given such a representation $W = \bigoplus_{i = 0}^r k_i W_{q_i}$, we set $p_{0} = \ldots = p_{k_0-1} = q_0$, $p_{k_0} = \ldots = p_{k_0+k_1 -1} = q_1$, $\ldots$, $p_{k_0 + \ldots + k_{r-1}} = \ldots = p_{k_0 + \ldots + k_r-1} = p_n=q_r$. Then $\bb C \bb P(W) \cong \bb C\bb P^n[p_0,\ldots , p_n]$. Note that $\bb C \bb P^n = \cp^n[1, \ldots, 1] = \bb C \bb P[nW_1]$.
\\
Let $n+1 = \sum_i k_i$. We set $q$ to be the vector consisting of the $q_i$ counted with their multiplicity. We denote by $\S^1_q$ the embedding of $\S^1$ into the orthogonal group $\O(W) \cong \O(n)$. Then the isometry group of $\bb C \bb P(W)$ is given by $$N_{\O(n)}(\S^1_q)/\S^1_q = \frac{\U(k_0) \times \ldots \times \U(k_r) \rtimes \bb Z_2}{\S^1_q},$$ where the $\bb Z_2$-factor is induced by complex conjugation.

\subsubsection{Weighted quaternionic projective spaces.}\label{subsection:wqtproj}
Let $V_m$ be the complex vector space of complex homogeneous polynomials of degree $m$ in two variables $z_1$ and $z_2$. Hence
$$V_m = \langle z_1^m, z_1^{m-1}z_2, \ldots, z_1z_2^{m-1}, z_2^m\rangle_{\bb C} \text{ and } \dim_{\bb C}V_m = m+1$$
There is a natural action of $\SU(2)$ on $V_m$ given by $$A \ast P(z_1,z_2) = P((z_1,z_2) \cdot A^{-1}) \text{ for } P \in V_m, A\in \SU(2)$$
This representation of $\SU(2)$ is irreducible (see \cite{Brocker2010-wu})  and identifies $V_m$ with the complex highest weight representation of weight $\frac{m}{2}$. If $m=2n$ is even, then $V_m$ is of \textit{real type}, i.e. it has a real structure, a conjugate linear map $J \colon V_{2n} \to V_{2n}$ with $J^2 = \Id$. Let $V^\pm_{2n}$ be the $\pm1$ Eigenspace of $J$. These are isomorphic $\SU(2)$-invariant real subspaces of $V_{2n}$.  If $m = 2n+1$ is odd, then this representation is of \textit{quaternionic type}, i.e. there exists a quaternionic structure, a conjugate linear map $J \colon V_{2n+1} \to V_{2n+1}$ with $J^2 = -\Id$. This turns $V_{2n+1}$ into a quaternionic vector space. Any irreducible real representation of $\SU(2)$ is either isomorphic to $V_{2n}^+$ or to $V_{2n+1}$, seen as a real vector space,  giving the real irreducible representations corresponding to highest weight $\frac{m}{2}.$ By Schur's Lemma we have
\begin{align*}
    \End_{\SU(2)}(V_{2n}^+, \bb R) \cong \bb R\\
    \End_{\SU(2)}(V_{2n+1}, \bb R) \cong \bb H
\end{align*}
We will now compute the isotropy groups of the $\SU(2)$-action. Note that any element of $\SU(2)$ is conjugate to an element of the form:
$$ e(t) = \begin{pmatrix}
    e^{it} & 0\\
    0& e^{-it}
\end{pmatrix} \in \U(1) \subset \SU(2)$$
We first compute the isotropy group of the base elements:
$$e(t) \ast z_1^{m-k}z_2^k = e^{i(m-2k)t} z_1^{m-k}z_2^{k}$$
If $m = 2n$ is even, then the element $z_1^nz_2^n$ has isotropy group $\S^1$, so the action on the unit sphere does not give rise to an orbifold quotient. If $m = 2n+1$ is odd, then $-\Id$ always acts as $-\Id \ast P = -P$. Hence it cannot be contained in any isotropy group, so all isotropy groups must be cyclic with odd order \cite[~Chapter 19]{Armstrong2010-nf}. Explicitly one can compute
$$\SU(2)_{z_1^{m-k}z_2^k} = \bb Z_{m-2k}.$$
Any $\SU(2)$-representation $V$ of quaternionic type gives rise to an action on the unit sphere whose quotient is an orbifold with cyclic isotropy groups of odd order. More precisely, let $V = \bigoplus_{i = 0}^r k_{i} V_{2n_i+1}$ then $$\bb H\bb P(V) \coloneqq \bb S(V)/\SU(2)$$
We call $\bb H\bb P(V)$ a \emph{weighted quaternionic projective space}. Note that $\bb H\bb P^n = \bb H\bb P(n V_1)$. Note furthermore that there is an action of $\Sp(k_0)\times \ldots \times \Sp(k_r)$ acting on $V = \bigoplus_{i = 0}^r k_i V_{2n_i+1}$ commuting with the $\SU(2)$-action, given by
\begin{align*}
    (A_0, \ldots, A_r) \ast (v_1^0, \ldots, v_{k_0}^0,\ldots , v_1^r, \ldots, v_{k_r}^r) \\= ((v_1^0, \ldots, v_{k_0}^0) \cdot A_0^{-1}, \ldots, (v_1^r, \ldots, v_{k_r}^r) A_r^{-1})
\end{align*}
This induces an $\Sp(k_0) \times\ldots \times \Sp(k_r)$-action on $\bb H \bb P(V)$. Hence the isometry group of $\bb H \bb P(V)$ is given by
$$N_{\O(n)}(\SU(2))/\SU(2) = \frac{\Sp(k_0) \times\ldots \times \Sp(k_r)}{\bb Z_2}$$
We collect the isometry groups of the simply connected weighted projective spaces and the CROSSes in Table \ref{tab:weightedcrosses}.
{
\renewcommand{\arraystretch}{1.3}
\begin{table}[h!]
    \centering
    \begin{tabular}{|c|c|c|c|}
        \hline
        $\OO$& $I_\std(\OO)$ & $\dim(\OO)$ & $\rk(I_\std(\OO))$  \\\hline\hline
         $\bb S^n$& $\O(n+1)$ & $n$& $\left\lfloor \frac{n+1}{2}\right\rfloor$ \\\hline
         $\bb C \bb P(W)$ & $\frac{\U(k_0) \times \ldots \times \U(k_r) \rtimes \bb Z_2}{\S^1_q}$ & $2\sum_{i = 0}^r k_i-2$& $\sum_{i = 0}^r k_i -1 $\\
         $ W = \bigoplus_{i = 0}^r k_iV_{q_i}$ & & & $= \frac{\dim(\OO)}{2} $\\\hline
         $\bb H \bb P (V)$ & $\frac{\Sp(k_0) \times \ldots \times \Sp(k_r)}{\bb Z_2}$ & $4\sum_{i = 0}^r k_i \cdot (n_i +1) -4$& $\sum_{i=0}^{r} k_i$\\
         $V = \bigoplus_{i = 0}^r k_iV_{2n_i+1}$ &&&\\\hline
         ${\rm Ca} \bb P^2$ & $F_4$ & $16$& $4$\\ \hline
    \end{tabular}
    \caption{Isometry groups of simply connected weighted projective spaces and CROSSes.}
    \label{tab:weightedcrosses}
\end{table}
}

\subsection{Fixed point homogeneous actions on weighted projective spaces}
\subsubsection{Fixed point homogeneous actions on  weighted complex projective spaces}
 Let $ p = (p_0, \ldots,p_n) \in \bb Z^{n+1}$ with all $p_i \neq 0$, and consider the weighted projective space 
 $$\bb C \bb P ^n[p] = \bb C \bb P^n[p_0, \ldots, p_n]$$

 Note that $\bb C \bb P^n[p]$ is not necessarily effective. We are interested in effective fixed point homogeneous $\S^1$-actions on $\bb C \bb P^n[p]$.
For this let $0 \neq q = (q_0,\ldots,q_n) \in \bb Z^{n+1}$ with $\gcd(q_0,\ldots, q_n) = 1$. Then $\S^1$ acts effectively on $\bb S^{2n+1}$ via $\rho_q$ and the action commutes with the action induced by $\rho_p$, and hence induces an action of $\S^1$ on $\bb C\bb P^n[p]$. Furthermore we assume that $q$ and $p$ are linearly independent, since otherwise the induces action by $\rho_q$ would be trivial. We denote by $\phi \coloneqq\rho_p\cdot \rho_q$ an action of $\T^2 = \S^1 \times \S^1$ on $\bb S^{2n+1}$. This induces a linear map
\begin{align*}
    \phi_\ast \colon \bb R^2 \cong \mathfrak{t}^2 &\to  \mathfrak{t}^{n+1} \cong \bb R^{n+1} ,\\
    \phi_\ast &= \begin{pmatrix}
        p_0 & q_0\\
        \vdots&\vdots\\
        p_n &q_n
    \end{pmatrix}
\end{align*}
By a straightforward computation, the principal isotropy of the $\T^2$-action is given by
\begin{align*}
    H &= \lbrace (x,y) \in \bb R^2 \setvert \phi_\ast(x,y) \in \bb Z^{n+1} \rbrace /\bb Z^2\\
    & = \bb Z_{g(p,q)} \oplus \bb Z_{\frac{N(p,q)}{g(p,q)}}.
\end{align*}
Here $g(p,q) = \gcd(p_0,\ldots,p_n,q_0,\ldots,q_n) = 1$, since $\gcd(q) = 1$ and $N(p,q)$ denotes the $\gcd$ of the $2$-minors of $\phi_\ast$, i.e. 
$$N(p,q) = \gcd\lbrace p_iq_j -q_jp_i \setvert i \neq j\rbrace$$
In order for the $\rho_q$-action to be fixed point homogeneous, it needs to fix a codimension two subspace. Therefore, let $(q_1, \ldots,q_n)$ and $(p_1, \ldots, p_n)$ be linearly dependent, i.e. there exists an $a \in \bb Z$, such that $q_i  = a \cdot \frac{p_i}{g}$ with $g = \gcd(p_1,\ldots, p_n)$. Note that in this case $q_0 \neq  a \cdot \frac{p_0}{g}$.  Therefore 
$$p_iq_j- p_jq_i = p_i \frac{p_j}{g}a - p_j \frac{p_i}{g}a = 0$$ and $$ p_iq_0 - p_0q_i = p_i q_0 - p_0 \frac{p_i}{g}a = \frac{p_i}{g}(gq_0 - ap_0).$$
Hence $N(p,q) = q_0g - ap_0$ and $H = \bb Z_{q_0g - a p_0}$. Let $d = \gcd(p)$, then the projection of $H$ to the first $\S^1$ factor has kernel $\bb Z_d$. Therefore, $\rho_q$ acts effectively on $\bb C \bb P^n[p]$, if and only if $\frac1d(q_0g - a p_0) = 1$. This means $q_0 = \frac 1g (ap_0 +d)$. Note that for a given $p$ this is always realizable by B\'ezout's Lemma: Let $x,y \in \bb Z$, such that $xp_0 +y g = d$, then we set $q_0 \coloneqq y$ and $a \coloneqq -x$. Furthermore, we can always pick $0 \le a < g$ and there are $d$ choices for $a$ in this range. 

Since the suborbifold $\lbrace [0,a_1,\ldots,a_n] \in \bb C \bb P[p_0,\ldots p_n]\rbrace\cong \bb C \bb P^{n-1}[p_1,\ldots,p_n]$ is fixed by $\rho_q$ and has codimension $2$, the action is fixed point homogeneous. Note that $\S^1$ acts in the first coordinate of $\cp [p_0,\ldots,p_n]$, but cannot be lifted effectively to an action in the first coordinate of $\bb S^{2n+1}$.

The generic local group of $\bb C \bb P^{n-1}[p_1, \ldots,p_n]$ is $\bb Z_g$. We will now describe a tubular neighbourhood around the isolated fixed point $[1,0,\ldots,0]$, which turns $\bb C \bb P^n[p]$ as a double disk bundle: We assume that $\bb C \bb P^n[p]$ is effective, since we only need this for our proof. Hence $d = 1$. The $\rho_q$ action fixes the point $[1, 0,\ldots,0]$. We will now compute a lift $K$ of the isotropy group: $1 \to \bb Z_{p_0} \to K \to \S^1 \to 1$. Since $\gcd(p_0,q_0) = 1$, the isotropy group of the $\T^2$-action is given by
\begin{align*}
    K = \T^2_{[1,0,\ldots,0]} &= \lbrace (z,w) \in \T^2 \setvert z^{q_0} w^{p_0} = 1 \rbrace\\
    &= \lbrace (z^{p_0}, \bar z^{q_0}) \setvert z \in \S^1\rbrace   \cong \S^1
\end{align*}
Now $ K = \S^1$ acts on $\lbrace (0,a_1,\ldots,a_n)\in \bb S^{2n+1}\rbrace$ by 
\begin{align*}
    (\bar z^{p_0},  z^{q_0}) \ast (0,a_1,\ldots,a_n) &= (0, z^{(-p_0 \cdot q_1 + q_0\cdot p_1)}a_1, \ldots, z^{(-p_0 \cdot q_n + q_0\cdot p_n)}a_n)\\
    & = (0, z^{\frac{p_1}{g}(gq_0-ap_0)}a_1, \ldots, z^{\frac{p_n}{g}(gq_0 - ap_0)}a_n)\\
    &= (0, z^{\frac{p_1}{g}}a_1, \ldots, z^{\frac{p_n}{g}}a_n) 
\end{align*}
We see immediately that $(\bb C\bb P^{n-1}[p_1, \ldots, p_n])_\eff = \bb C\bb P^{n-1}\left[\frac{p_1}{g}, \ldots, \frac{p_n}{g}\right]$, i.e. we recover the effective model of the fixed point component. Hence a tubular neighbourhood around $[1, 0, \ldots,0]$ is isomorphic to 
$\S^1 \times_{K}\bb D^{2n}$, with
$$z \ast (u,a_1, \ldots, a_n) = (\bar z^{p_0}u,  z^{\frac{p_1}{g}}a_1, \ldots, z^{\frac{p_n}{g}}a_n)$$

\begin{exa} \label{exa:fp:S1}
    The fixed point homogeneous $\S^1$-actions on $\cp [p_0,\ldots,p_n]$ are given, after possibly reordering the $p_i$, by 
    \begin{align*}
        z \ast [a_0,\ldots, a_n] = [z^{\frac{1}{g}(ap_0 +d)}a_0, z^{\frac{p_1}{g}a}a_1,\ldots , z^{\frac{p_n}{g}a}a_n ]
    \end{align*}
    Here $g = \gcd(p_1,\ldots,p_n)$, $d = \gcd(p_0, \ldots,p_n)$ and $0\le a <g$ such that $\frac{1}{g}(ap_0 +d)$ is an integer. There are always exactly $d$ choices.\\
    Note that these describe the possible effective lifts to $\S^{n+1}$. On $\cp[p_0,\ldots,p_n]$ these are all equivalent to the coordinate action $z\ast[a_0:\ldots:a_n]=[z^{\frac{d}{g}}a_0:a_1:\ldots:a_n]$. Note that this action does not lift to a coordinate action on $S^{n+1}$ unless $d=g$ and one can take $a=0$.
\end{exa}

\noindent
We will now give a description of $\SU(k)$-actions on weighted projective spaces: \begin{align*}
    \SU(k) &\acton \bb C\bb P^{k+n}[p,\ldots,p,q_0,\ldots,q_n]\\
    A \ast[a_0,\ldots, a_{k-1}, a_{k}, \ldots, a_{k+n}] &= [ (a_0,\ldots, a_{k-1}) \cdot A^\ast,a_{k}, \ldots, a_{k+n}]
\end{align*}
With $\gcd(p,q_0,\ldots,q_n) = 1$. Let $g = \gcd(q_0, \ldots,q_n)$. Note that the kernel of this action is given by $\bb Z_{\gcd(k, g)}$. The fixed point set is given by $$\bb C\bb P^{n}[q_0,\ldots,q_n] = \lbrace [0,\ldots,0,a_0 \ldots, a_n] \in \bb C\bb P^{k+n} [p,\ldots,p, q_0,\ldots, q_n]\rbrace $$
The effective structure is given by $\bb C\bb P^{n}[q_0^\prime,\ldots,q_n^\prime]$, where $q_i^\prime = \frac{q_i}{g}$. The orbit $\SU(k) \cdot[1,0,\ldots,0]$ has isotropy group $\U(k-1) \subset \SU(k)$ and is diffeomorphic to $\bb C\bb P^{n}$, embedded with local group $\bb Z_p$. To compute the local lift $1 \to \bb Z_p \to K_p \to \U(k) \to 1$ we compute the isotropy of the orbit of the lifted action of $\SU(k) \times \S^1$ on $\bb S^{2(k+n)+1}$.  This is given by $K_p=\lbrace (\diag(z^p,B), z) \vert \det B = \bar z^p\rbrace \cong \SU(k-1) \times_{\bb Z_{k-1}}\S^1$, where $\bb Z_{k-1} \to \bb Z_{\frac {k-1} {\gcd(k-1,p)}} \subset \SU(k-1)$.  $K_p$ projects to $\U(k)$ with kernel $\bb Z_p$. $L = \lbrace \diag (\bar w, B) \setvert \det B  = w, w \in \bb Z_g\rbrace \cong \SU(k-1) \times_{\bb Z_{\gcd(g, k-1)}} \bb Z_g $ and $H_p = \lbrace (\diag (\bar z ^p, B),z)\setvert \det B = z^p, z \in \bb Z_g\rbrace \cong \SU(k-1) \times_{\bb Z_{\gcd(p,g)}} \bb Z_g$. The normal bundle of $\SU(k)\cdot[1,0\ldots,0]$ is diffeomorphic to $$\SU(k) \times_{\SU(k-1)\times_{\bb Z_{k-1}}\S^1 } \bb D^{2n+2}.$$
If $k = 2m$, then $\Sp(m) \subseteq \SU(k)$ acts transitively on the normal disk to $\bb C\bb P^n[q_0,\ldots,q_n]$. The kernel of the $\Sp(m)$-action on $\bb C\bb P^{2m+n}[p,\ldots,p,q_0,\ldots,q_n]$ is given by $\bb Z_{\gcd(2,g)}$. $G_p = \lbrace \diag(w,\bar w, B)\setvert w \in \S^1,\, B\in \Sp(k-1)\rbrace$. And $K_p= \lbrace (\diag(z^p,\bar z^p,B),z)\setvert z \in \S^1,\, B \in \Sp(k-1)\rbrace \cong \Sp(k-1) \times \S^1$. $L = \lbrace \diag(w, \bar w, B)\setvert B \in \Sp(k-1),\, w \in \bb Z_g\rbrace$ and $H_p = \lbrace (\diag (z^p,\bar z^p, B), z) \setvert z \in \bb Z_g, B \in \Sp(k-1)\rbrace \cong \Sp(k-1) \times \bb Z_g$.

\begin{exa}\label{exa:fp:cp}
\mbox{}
\begin{enumerate}
    \item $\SU(k)$ acts fixed point homogeneous on $\OO = \cp^{k+n}[p,\ldots, p, q_0, \ldots,q_n ]$ by
    \begin{align*}
        A \ast [a_0,\ldots, a_{k+n}] = [(a_0,\ldots,a_{k-1})\cdot A^\ast,a_k, \ldots, a_{k+n}].
    \end{align*}
    \item $\Sp(m)$ acts through the embedding $\Sp(m) \to \SU(2m)$ and the action above on $\OO$.
    \item $\U(k)$ and  $\U(1) \times \Sp(m)$ act fixed point homogeneous on $\OO$, such that the $\SU(k)$ and $\Sp(m)$ act as above and the $\U(1)$ factor is in $Z_{I_\std(\OO)}(\SU(k))$.
\end{enumerate}
    
\end{exa}

\subsubsection{Fixed point homogenous actions on  quaternionic wighted  projective spaces} \label{subsection:quaternionic}

The only fixed point homogeneous actions are the following:\\ Let $V = \bigoplus_{i = 0}^r k_iV_{2n_i+1}$ be a quaternionic representation with $n_0 = 0$ and $1  \le k_0^\prime \le k_0$. Then $\Sp(k_0^\prime)$ acts fixed point homogenous on $\bb H \bb P(V)$, since it fixes $\bb S(V) \cap ((k_0 - k_0^\prime)V_1 \oplus \bigoplus_{i = 1} ^{r} k_i V_{2n_i+1})$ and obviously acts transitively on the normal sphere embedded in $k_0^\prime V_1 \cong \bb H^{k_0^\prime}$. The isotropy group of the $\Sp(k_0^\prime)$-action at $p =[1,0, \ldots, 0]$ is given by $\Sp(1) \times\Sp(k_0^\prime -1)$. Furthermore the $\Sp(k_0^\prime-1)$ factor acts trivially on the normal sphere to $\Sp(k_0^\prime)\cdot p$ and the $\Sp(1)$-factor acts with representation $(k_0 - k_0^\prime)V_1 \oplus \bigoplus_{i = 1}^rk_iV_{2n_1+1}$. 

\begin{exa}\label{exa:fphquat}
    Let $V = \bigoplus_{i = 0}^r k_i V_{2n_i+1}$ with $n_0 = 0$. Then $\Sp(k_0^\prime)$ with $1 \le k_0^\prime \le k_0$ acts fixed point homogeneous on $\bb H\bb P(V)$
\end{exa}

We conclude this section with a table of every fixed point homogeneous action on the sphere, the Cayley plane and the weighted projective spaces. 
{
\renewcommand{\arraystretch}{1.3}
\begin{table}[h!]
    \centering
    \begin{tabular}{|c|c|c|}
    \hline
        Space & Possible Groups & Group Action \\\hline
        \hline
         $\bb S^n$& $\SO(k), \SU(k), \U(k), \Sp(k), \Sp(k)\U(1),$ & See Ex. \ref{exa:fphspheres}\\
         & $\Sp(k)\Sp(1), \Spin(9), \Spin(7), \gG_2$ & \\\hline
         $\Ca \bb P^2$ & $\Spin(9)$ & See Ex. \ref{exa:fphcayley} \\\hline
         $\cp[p_0, \ldots, p_n]$ & $\U(k), \SU(k), k\le \max \text{mult}(p_i)$ & See Ex. \ref{exa:fp:S1} and \ref{exa:fp:cp}\\
          & $\Sp(k), \Sp(k)\cdot \U(1), 2k\le \max \text{mult}(p_i)$& \\\hline
         $\hp(V)$ & $\Sp(k), k\le\text{mult(standard }S^3\acton\HH\text{ in }V)$ & See Ex. \ref{exa:fphquat}\\\hline
    \end{tabular}
    
    \caption{The spaces of Theorem \ref{main:A}}
    \label{tab:examples}
\end{table}
}


\section{Fixed point homogeneous Orbifolds}\label{sec:effective}
A fixed point homogeneous orbifold is an orbifold $\OO$ together with a $G$-action on $\OO$ such that there is a non-empty fixed point component  $F\subseteq\Fix(\OO,G)$ such that $G$ acts transitively on the space of normal directions of $F$. Fixed point homogeneous actions on positively curved manifolds have been classified in \cite{Grove1997-xz}, the results have been partially extended in \cite{Spindeler2020-vj} to non-negative curvature. The proof is roughly as follows: Let $M$ be a fixed point homogeneous manifold and $F$ a fixed point component, such that $G$ acts transitively on the normal sphere. Let $\pi\colon M \to M/G$ the quotient map and $\bar F = \pi (F)$. Then $\bar F$ corresponds to a boundary component of $M/G$. Furthermore $M/G$ is positively curved. By the Soul theorem for Alexandrov spaces the distance map $d_{\bar F}$ is concave and hence there is a point in $M/G$ at maximal distance corresponding to a $G$-orbit $G\cdot p$ in $M$. Therefore $M$ is diffeomorphic to a double disc bundle
\begin{align*}
    M = \mathcal{V}F \cup_\partial \mathcal{V} G\cdot p
\end{align*}
Furthermore all orbits in $M \bs( F\cup G\cdot p)$ are principal and diffeomorphic to $\mathcal{V}^1_q(F) \cong \bb S^k \cong G/H$. Homogeneous spheres are classified, the possibilities were listed in Table \ref{tab:fixhom:homospheres} above.

It is now easy from the structure of the double disc bundle and the table to construct equivariant diffeomorphism between compact rank one symmetric spaces with fixed point homogeneous actions and the manifold $M$ in the corresponding case for $G/H$. 
\\
We will generalize this construction to orbifolds. Note, that the space of normal directions to $F$ is in general a manifold space form $\bb S^k/\Gamma$ and might be singular as a suborbifold, when embedded in $\OO$.\\
In particular, our main result is:
\begin{theorem}\label{main:fphom}
        Let $\OO$ be a simply connected, fixed point homogeneous and effective $G$-orbifold with positive sectional curvature, such that $G$ acts almost effectively. Then up to a lift of the $G$-action $\OO$ is either $G$-equivariantly diffeomorphic to one of the following cases:
        \begin{enumerate}[label = \alph*)]
            \item A sphere $\bb S^{l+1+m}$, such that $G$ acts transitively on $\bb S^l$, with action
            $$g \ast (a_0,\ldots, a_{l+m}) = (g\ast(a_0,\ldots,a_l), a_{l+1}, \ldots, a_{l+m})$$
            \item One of the following cases, with the numbering according to Table \ref{tab:fixhom:homospheres}:
            \begin{enumerate}[label = \arabic*., start = 2]
            \item $n \ge 2$, $\bb C \bb P(W)$ for $W =  \bigoplus_{i = 0}^r k_i W_{q_i}$ for some $k_i \ge n+1$, $q_i \in \bb Z$ with $\gcd(q_0, \ldots q_r) = 1$ an the action from example \ref{exa:fp:cp}.
            \item[2.1.] $n \ge 0$, $\bb C \bb P(W)$ for $W =  \bigoplus_{i = 0}^r k_i W_{q_i}$ for some $k_i \ge n+1$, $q_i \in \bb Z$ with $\gcd(q_0, \ldots q_r) = 1$ and the actions from examples \ref{exa:fp:S1} and \ref{exa:fp:cp}.
            \item $n \ge 0$, $\bb C \bb P(W)$ for $W =  \bigoplus_{i = 0}^r k_i W_{q_i}$ for some $k_i \ge 2n+2$, $q_i \in \bb Z$ with $\gcd(q_0, \ldots q_r) = 1$ and the action from example \ref{exa:fp:cp}, or $\bb H \bb P(V)$ for $V = \bigoplus_{i=0}^{r} k_iV_{2n_i+1}$, such that $n_0 = 0$ and $k_0 \ge n+1$ and the action from example \ref{exa:fphquat}. 
            \item[3.1.] $n \ge 0$,            $\bb C \bb P(W)$ for $W =  \bigoplus_{i = 0}^r k_i W_{q_i}$ for some $k_i \ge 2n+2$, $q_i \in \bb Z$ with $\gcd(q_0, \ldots q_r) = 1$ and the action from example \ref{exa:fp:cp}.
            \item ${\rm Ca}\bb P^2$ with the subaction of $\Spin(9) \subseteq F_4$.
        \end{enumerate}
        \end{enumerate}
    
\end{theorem}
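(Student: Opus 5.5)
We follow the Grove--Searle strategy outlined in the introduction, carried out in the chart language of Section \ref{sec:prelim:effective}. Fix a fixed point component $F$ on whose normal space $G$ acts transitively, and let $\pi\colon \OO\to\abs{\OO}/G$ be the quotient map. Lifting the action to the frame bundle $\Fr\OO$ and using the slice theorem, the image $\bar F=\pi(F)$ is a boundary component of the positively curved Alexandrov space $\abs{\OO}/G$. By the soul theorem for Alexandrov spaces, $d_{\bar F}$ is strictly concave, so there is a unique point at maximal distance. It corresponds to an orbit $Gp$, and every orbit in between is principal, with orbit space an interval segment. This yields a double disk bundle decomposition
\[
\OO=\V F\cup_\partial \V(Gp).
\]
Here the principal orbits are $G$-equivariantly $\mathcal V^1_q(F)$, a space form $\bb S^k/\Gamma_q$, where $\Gamma_q$ is the local group at $F$. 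Since $\pi_1^{\orb}(\OO)=0$, a van Kampen argument on the double disk bundle forces the normal sphere at a generic point of $F$ to carry a transitive $G$-action compatible with the lift. In effect, $G$ acts transitively on $\bb S^k$ (the lift through $\Gamma_q$), and the pair $(G,H)$ is one of the rows of Table \ref{tab:fixhom:homospheres}, with $\Gamma_q$ contained in $N(H)/H$. This gives the case split: $\Gamma_q$ is trivial, or lies in $\Z_2$, $\S^1$ or $\S^3$. The groups $\Z_2$ (rows 1, 4, 5, 6, 3.2) force a manifold-like situation, as does $\S^3$ together with the quaternionic row.

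In each case we next recover the bundle at $Gp$. Its fiber is a disk $\bb D^{m}$ with a linear action of a lifted isotropy group $K$, which is an extension $1\to\Gamma_p\to K\to G_p\to 1$. Since $K$ must act transitively on the unit normal sphere of $Gp$ with isotropy $H$ (lifted), $K/H$ is again a homogeneous sphere, and the group diagram $H\subset K\subset G$ is of cohomogeneity-one type with one singular orbit a fixed point set. We then read off the possibilities:
\begin{itemize}
\item $K=G$ gives the sphere case a);
\item $K/H=\bb S^1$ or $\bb S^3$ with $K$ containing the $\S^1$ or $\Sp(1)$ factor of $N(H)$ gives the weighted complex or quaternionic cases;
\item the Cayley case comes from $\Spin(9)$, as in the manifold proof.
\end{itemize}
The representation of $K$ on $\bb D^m$ is determined by the slice representation along $F$. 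Concretely, $F$ itself is a suborbifold fixed by $G$, positively curved and simply connected, so $F\cong \partial\V(Gp)/G$ is the quotient of a sphere by the $\S^1$ or $\Sp(1)$ part of $K$. By comparing with the computations in Examples \ref{exa:fp:S1}, \ref{exa:fp:cp} and \ref{exa:fphquat}, this identifies the weights $q_i$ (resp. the $V_{2n_i+1}$), with the $\gcd$ normalized by effectivity.

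Finally, we glue. Both disk bundles are then equivariantly identified with the corresponding pieces of the model $\cp(W)$, $\hp(V)$, $\bb S^n$ or $\Ca\bb P^2$. The gluing maps are $G$-equivariant diffeomorphisms of the common boundary $G/H$ (up to the $\Gamma$-quotient), hence are given by elements of $N(H)/H$. They can be absorbed by an equivariant diffeomorphism of one disk bundle, or by passing to a different lift of the action, which is the ``up to a lift'' in the statement.

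We expect the main obstacle to be the recovery step. Unlike for manifolds, the bundle at $Gp$ does not determine $\OO$, so the data near $F$ (the local group $\Gamma_q$ and the effective weights of $F$) must be combined with the extension $K$ of $G_p$ by $\Gamma_p$. Solving this extension problem is where we would concentrate the work. The plan is to compute $K$ as the isotropy of a lifted torus or $\Sp(1)\times G$ action, as done in Example \ref{exa:fp:S1}, and to show that every admissible extension arises this way.
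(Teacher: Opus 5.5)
\textbf{There is a genuine gap.} Your proposal works inside a cohomogeneity one picture, and that breaks the central step. The lifted isotropy group $K_p$ (the extension of $G_p$ by $\Gamma_p$) does \emph{not} act transitively on the unit normal sphere $\bb S^l$ of $Gp$. Its orbit space is $F_{\eff}$ (part 2 of Theorem~\ref{thm:strucure}), so it is transitive only when $F$ is a point. Likewise:
\begin{itemize}
\item $\abs{\OO}/G$ is a cone over $\bar F$, not an interval;
\item the common boundary $E$ fibres over both $F$ and $Gp$, and is not a single orbit $G/H$.
\end{itemize}
Your own identification $F\cong\partial\V(Gp)/G$ contradicts the transitivity claim.

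So ``$K/H$ is again a homogeneous sphere'' has no justification, yet it is exactly what generates your case list. The missing idea is Lemma~\ref{fixpoint:lem:p}. Since $\bb S^l/K_p$ is an orbifold, all $K_p$-orbits are almost principal, so they form a Riemannian foliation of the round sphere with leaves of one dimension. By Lytchak--Wilking, that dimension is $0,1,3$ or $7$ unless the action is transitive. Dimension $7$ is excluded because the Hopf fibration $\bb S^{15}\to\bb S^8$ is not homogeneous; a rank argument for almost free actions on spheres would also work. Only with this, together with $H_p\hookrightarrow L$ (Lemma~\ref{fixpoint:lem:Hpinj}) and the subgroup structure $L\subset G_p\subset G$, do you get $K_p/H_p\cong \S^1$ or $\Sp(1)$, or else the sphere cases (Lemmas~\ref{fixpoint:lem:transsphere} and~\ref{fixpoint:lem:Kp=Hp}).

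\textbf{Gluing.} Gluing by elements of $N(H)/H$ only makes sense when $E=G/H$. In general you need the Uniqueness Lemma~\ref{lem:unique}. There $D(F)$ is the orbifold mapping cylinder of $E\to E/G\cong F$, so any equivariant isomorphism of the bundles at $Gp$ extends radially. This requires the generic local groups $\Gamma_q$ of $F$ to agree. That is automatic for $\codim F\ge 3$, since $\Gamma_q\cong\pi_1(\bb S^k/\Gamma_q)$, but it is an independent invariant in codimension $2$.

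\textbf{The extension problem.} You leave $1\to\Gamma_p\to K_p\to G_p\to 1$ open. The paper settles it not by constructing lifts but by proving $K_p$ is connected.
\begin{itemize}
\item For $k\ge 2$: via the frame bundle, $Gp\hookrightarrow\OO$ is orbi-$k$-connected (Lemma~\ref{fixpoint:lem:doubledisc}). Hence $\pi_1^{\orb}(\OO)=0$ forces $\pi_1^{\orb}(G/K_p)=0$, and so $\pi_0(K_p)=0$ (Corollary~\ref{fixpoint:lem:connectedgroup}).
\item For $k=1$: $\Gamma_q\subset\SO(2)$ contains no reflections, and $\pi_1^{\orb}(F_{\eff})=1$.
\end{itemize}
Then $K_p$ is a connected finite cover of $G_p$. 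So $\Gamma_p$ is a quotient of $\pi_1(G_p)$, which is cyclic or trivial in the relevant cases, and this pins $K_p$ down.

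Finally, your claim that $\Gamma_q\subset\Z_2$ or $\S^3$ ``forces a manifold-like situation'' is unproved. The paper gets $\Gamma_q=1$ from connectedness of $L$ (Lemma~\ref{fixpoint:lem:Lconnected}).
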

\begin{rem}\mbox{}
\begin{enumerate}
    \item  In theorem \ref{main:fphom} it suffices  to assume that $\vert \OO \vert/G$ is positively curved in Alexandrov sense.
    \item  The obvious difference to the case where $\OO$ is a manifold in the classification of Grove and Searle \cite{Grove1997-xz} is the occurrence of the weighted complex and quaternionic projective spaces. It is interesting to note that all weighted complex projective spaces show up in our classification, but not all weighted quaternionic projective spaces. Only those, where $\Sp(1) \acton V$ contains a subrepresentation equivalent to the Hopf-action. Furthermore all of these quaternionic projective spaces are manifolds away from the fixed point set. 
\end{enumerate}

\end{rem}
We will postpone the proof of theorem \ref{main:fphom} and first derive the case where $\OO$ has a fundamental group.
\begin{lem}\label{lem:fixhom:linear}
    Let $L$ be a Lie group acting effectively on a simply connected fixed point homogeneous $G$-orbifold $\OO$ by $G$-equivariant diffeomorphisms. Then $L$ must be a linear action and contained in the centralizer of $G$ in the isometry group of the standard metric on $\OO$.
\end{lem}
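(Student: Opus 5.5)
The plan is to use the double disk bundle structure from the proof of Theorem \ref{main:fphom} and to linearize the $L$-action piece by piece. Write $\OO = \V F\cup_\partial \V(G\cdot p)$, where $F$ is the fixed point component on whose normal directions $G$ acts transitively and $G\cdot p$ is the unique orbit at maximal distance from $F$ in $\abs{\OO}/G$. I assume, as in the intended application, that $L$ is compact; otherwise I first replace $L$ by its closure in the diffeomorphism group preserving the structure. After averaging the metric over $L$ (which keeps the $G$-invariance, since $L$ commutes with $G$), I may assume $L$ acts isometrically for some $G\times L$-invariant metric. The positivity of curvature of $\abs{\OO}/G$ may be lost in this step, but the double disk bundle decomposition is topological and is what I actually use.

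Since $L$ commutes with $G$, it permutes the components of $\Fix(\OO,G)$ and the exceptional orbits, so it either preserves both $F$ and $G\cdot p$ or exchanges them. In the latter case $F$ and $G\cdot p$ are both fixed points, so $\OO$ has cohomogeneity one with both singular orbits points. Then $L$ fixes the middle principal orbit $G/H\cong\bb S^k$, and I run the argument below with that orbit in place of $G\cdot p$, using $\OO$ as the suspension of $G/H$.

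In the main case, $L$ preserves $G\cdot p$, and a $G$-equivariant diffeomorphism of $\V(G\cdot p)\cong G\times_K \bb D$ is determined by two pieces of data up to $G$-equivariant isotopy:
\begin{itemize}
  \item the induced map on $G\cdot p = G/G_p$, which is right multiplication by an element of $N_G(G_p)/G_p$, lifted to the local-group extension $1\to\Gamma_p\to K\to G_p\to1$;
  \item a $K$-equivariant diffeomorphism of the slice disk.
\end{itemize}
By the slice theorem, applied on the frame bundle, the second piece can be taken linear. So $L$ maps to $N(K)/K\times \End_K(\text{slice})^\times$, and I would compute this group in each case of Theorem \ref{main:fphom} from the explicit data of Section \ref{sec:examples}, that is, $K_p$, $H_p$, $L$ and the slice representations. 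The claim to verify is that this group coincides with $Z_{I_\std(\OO)}(G)$ acting on the normal bundle of $G\cdot p$. For example, for $\SU(k)$ acting on $\cp(W)$ the centralizer should be the $\U(1)$ factor together with $\U(k_i)$ for the remaining weights; for $\Sp(k_0')$ acting on $\hp(V)$ it should be $\Sp(k_0-k_0')\times\prod_{i\ge1}\Sp(k_i)$ modulo $\Z_2$.

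Next I extend across $\V F$. The restriction of $L$ to the common boundary $\partial\V(G\cdot p)=\partial\V F$ now agrees with the restriction of a linear model action. The action on $\V F$ is determined by its action on $F=\V F/G$, radially extended along the normal sphere bundle, and both actions commute with $G$, which is transitive on the normal spheres. Hence an equivariant diffeomorphism on the boundary extends radially over $\V F$, after smoothing the collar in the standard way. This produces a $G\times L$-equivariant diffeomorphism to the linear model.

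The main obstacle is the centralizer computation together with the lifting problem to the local groups. One must show that every $G$-equivariant symmetry of $\V(G\cdot p)$ is compatible with the extension $\Gamma_p\to K\to G_p$ and with the gluing to $\V F$. The gluing matters because the bundle at $G\cdot p$ alone does not determine $\OO$, so extra symmetries of $\V(G\cdot p)$ could fail to extend over $\V F$. To handle this, I would first compare both actions on the principal orbit $G/H$, where the relevant group is just $N_G(H)/H$ as listed in Table \ref{tab:fixhom:homospheres}, and then check case by case which of those elements extend to both halves.
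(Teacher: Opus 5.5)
Your skeleton is the paper's: the preserve-or-swap dichotomy for $F$ and $G\cdot p$, the equidistant middle orbit in the cohomogeneity-one swap case, linearization on the tube of $G\cdot p$, and radial extension over $\V F$.

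\textbf{The gap is your opening move of averaging the metric.} You keep only the ``topological'' double disk structure, but the argument needs that structure to be $L$-invariant. That invariance comes from $L$ acting by isometries of the metric for which $\abs{\OO}/G$ is positively curved, as the deck group does in Corollary~\ref{fphom:cor:fundamental}. The paper's proof relies on exactly this at three points:
\begin{itemize}
\item it concludes that $L$ fixes $G\cdot p$ once $L$ fixes $F$;
\item it uses that $L$ preserves $\{d(x,p)=d(x,q)\}$;
\item it uses that the normal exponential map of $G\cdot p$ is $L\times G$-equivariant.
\end{itemize}
With isometries, $d(\bar F,\cdot)$ is $L$-invariant, so $L$ fixes its unique maximum $G\cdot p$. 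Averaging a gradient-like field of $d(\bar F,\cdot)$ then gives an $L$-equivariant collar, so $\OO=\V F\cup_E\V(G\cdot p)$ holds $L$-equivariantly. After averaging you have none of this.

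Your substitute, that $L$ permutes the exceptional orbits, does not single out $G\cdot p$ when $G\cdot p$ is principal. This happens exactly in the suspension case $K_p=H_p$ of Lemma~\ref{fixpoint:lem:Kp=Hp}, where every orbit off $F$ is a copy of $G/H$. Even when $L$ does preserve $F$ and $G\cdot p$, the two tubes of the averaged metric leave a middle region. You would need that region to be an $L\times G$-equivariant product, which is an equivariant $h$-cobordism statement you have no tool for. Your swap case does survive averaging, since the orbit space is then an interval on which $L$ acts isometrically.

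\textbf{This is not a technicality: for general $G$-equivariant diffeomorphisms the conclusion is false.} Let $G=\SO(k+1)$ act on the first factor of $\R^{k+1}\oplus\R^{m+1}$, restricted to the unit sphere. The map $(x,y)\mapsto y$ identifies the orbit space with $\bb D^{m+1}$, with $F=\partial\bb D^{m+1}$. Any diffeomorphism $\phi$ of $\bb D^{m+1}$ with $\abs{\phi(y)}=\abs{y}$ near the boundary lifts to the $G$-equivariant diffeomorphism
\[
(x,y)\mapsto\Bigl(\sqrt{\tfrac{1-\abs{\phi(y)}^2}{1-\abs{y}^2}}\,x,\ \phi(y)\Bigr),
\]
and the lift is multiplicative in $\phi$. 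Using an invariant collar, any smooth finite group action on $\bb D^{m+1}$ can be conjugated into this form. Take a fixed-point-free one, which exists for $m$ large by Floyd--Richardson and Oliver. This gives an effective action by $G$-equivariant diffeomorphisms with no invariant interior orbit. By contrast, every element of $Z_{\O(k+m+2)}(G)$ preserves the soul orbit $\bb S^k\times\{0\}$, so the two actions are not $G$-equivariantly conjugate. You must therefore keep $L$ isometric for the original metric rather than average.

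Three smaller points:
\begin{itemize}
\item Knowing each element of $L$ only up to $G$-equivariant isotopy does not linearize the group action; use the $G\times L$ slice theorem at $G\cdot p$ instead.
\item The compatible pairs (normalizer element, slice map) form an extension, not the product $N(K)/K\times\End_K(\text{slice})^\times$.
\item The identification with $Z_{I_\std(\OO)}(G)$, which you call the main obstacle, is still a plan rather than an argument.
\end{itemize}
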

\begin{proof}
    Since $L$ acts by $G$-equivariant diffeomorphisms, we have, that the actions of $G$ and $L$ commute. Therefore $l \in L$ maps a $G$-orbit to one of the same isotropy type. \\
    We first consider the case, where $G$ acts by cohomogeneity one, hence $\OO = D^{l+1}(p) \cup D^{l+1}(q)$ for two fixed points $p$ and $q$. In this case  either $\OO \cong \bb S^{l+1}$, and $G$ acts transitively on the equator $\bb S^l$ or $\OO \cong \bb C\bb P^1[m,n] $, with $\gcd(m,n) = 1$.  There are two options: $L$ fixes $p$ and therefore acts with the isotropy representation and is therefore linear, or $L \neq L_p$. Therefore any element in $L \setminus L_p$ interchanges $p$ and $q$. This means that $p$ and $q$ have the same local groups and therefore in the latter case $m = n = 1$, which coincides with the first one if $l =1$. Let $A = \lbrace x \in \OO \setvert d(x,p) = d(x,q)\rbrace$. Since $G$ acts by cohomogeneity one, we have that $A =G\cdot z$ is a single orbit. Furthermore it is easy to see, that $L$ fixes $A$. Now $L$ acts smoothly on $D(G/H) = (-1,1) \times G/H$. This induces an action of $L$ on the second factor which fixes the origin. Hence $L$ acts by reflections. Furthermore $L$ acts on $G/H$ by linear maps (see \cite[Sublemma~2.6]{Grove1997-xz}). Hence, the action is linear in total.\\
    In the remaining cases, the fixed point component is infinite and hence will always be fixed by the $L$ action. Therefore $L$ respects the structure of the double disc bundle, ie. $L$ also fixes the orbit $G\cdot p$. Again by \cite[Sublemma~2.6]{Grove1997-xz}, we have that the action is linear on $G/G_p$. \\
    The normal exponential map $\exp\colon \V^{\le 1} G\cdot p \to  D(G\cdot p)$ is an $L \times G$-equivariant diffeomorphism. Hence the action of $l \in L$ is linear on the fibers, i.e. the map $l$ maps a slice at $p$ to a slice at $l\ast p$ and this action is equivalent to $dl \colon \bb D_p/\Gamma_p \to \bb D_{l\ast p}/\Gamma_p$. Since the action respects the double disc bundle structure it extends linearly.
\end{proof} 
\begin{cor}\label{fphom:cor:fundamental}
    Let $\OO$ be a fixed point homogeneous and effective $G$-orbifold with positive sectional curvature. Let  $\tilde\OO$ be the universal covering of $\OO$, such that $\OO = \tilde\OO/\Gamma$ for a finite group $\Gamma$, then $\Gamma \subset Z_{{\rm Isom}_{\rm std}(\tilde\OO)}(G)$, where ${\rm Isom}_{\rm std}(\tilde\OO)$ is the isometry group of the standard metric.\\
    Furthermore $Z_{{\rm Isom}_{\rm std}(\tilde\OO)}(G)$ is given by:
    \begin{enumerate}[label = \alph*)]
        \item $\Isom_\std(\tilde \OO) \cong \O(l+1+m)$
        \begin{multicols}{2}
            \begin{enumerate}[label = \arabic*.]
            \item    $\bb Z_2 \times \O(m)$, $l = n$
            \item $\U(1) \times \O(m)$, $l = 2n+1$, $n \ge 2$
            \item[2.1] $\U(1) \times \O(m)$, $l = 2n+1$
            \item $\Sp(1) \times \O(m)$, $l = 4n+3$
            \item[3.1] $\U(1) \times \O(m)$, $l = 4n+3$
            \item[3.2] $\bb Z_2 \times \O(m)$
            \item $\bb Z_2 \times \O(m)$
            \item $\bb Z_2 \times \O(m)$
            \item $\bb Z_2 \times \O(m)$
        \end{enumerate}
        \end{multicols}
        \item 
        \begin{enumerate}[label = \arabic*., start = 2]
            \item $(\U(k_0) \times \ldots \times \U(k_{i-1}) \times \U(k_i-n-1) \times \U(k_{i+1}) \times \ldots \times \U(k_r))/\bb Z_{q_i}$
            \item[2.1] $(\U(k_0) \times \ldots \times \U(k_{i-1}) \times \U(k_i-n-1) \times \U(k_{i+1}) \times \ldots \times \U(k_r))/\bb Z_{q_i}$ 
            \item $(\U(k_0) \times \ldots \times \U(k_{i-1}) \times \U(k_i-2n-2) \times \U(k_{i+1}) \times \ldots \times \U(k_r))/\bb Z_{q_i}$ for $n \ge 1$,\\
            $(\U(k_0) \times \ldots \times \U(k_{i-1}) \times \U(k_i-2) \times \U(k_{i+1}) \times \ldots \times \U(k_r))/\bb Z_{q_i}\rtimes \bb Z_2$ for $n = 0$
            or \\
             $(\Sp(k_0 - n-1) \times \Sp(k_2) \times \ldots \times \Sp(k_r))/\bb Z_2$
             \item[3.1]  $(\U(k_0) \times \ldots \times \U(k_{i-1}) \times \U(k_i-2n-2) \times \U(k_{i+1}) \times \ldots \times \U(k_r))/\bb Z_{q_i}$
             \item $\bb Z_2$
        \end{enumerate}
    \end{enumerate}
    
\end{cor}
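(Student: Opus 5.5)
The plan has two parts: placing $\Gamma$ inside the centralizer, and computing that centralizer case by case.

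\textbf{Placing $\Gamma$.} First I lift the $G$-action to $\tilde\OO$. Fixed point homogeneity is local near $F$, so $\tilde\OO$ is again fixed point homogeneous for the lifted action. By Myers for orbifolds (via the Alexandrov quotient), $\tilde\OO$ is compact with positive curvature and $\Gamma$ is finite. Passing to a finite cover of $G$ if necessary, the lifted action commutes with the deck group $\Gamma$. Theorem \ref{main:fphom} then puts $\tilde\OO$, with its $G$-action, into one of the standard forms a), b). Since $\Gamma$ acts by $G$-equivariant diffeomorphisms of $\tilde\OO$, Lemma \ref{lem:fixhom:linear} shows that, after the equivariant diffeomorphism, $\Gamma$ acts linearly and lies in $Z_{\Isom_\std(\tilde\OO)}(G)$. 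This gives the first assertion.

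\textbf{Computing the centralizer.} In case a), $\Isom_\std=\O(l+1+m)$ and $G$ acts on $\R^{l+1}$ transitively on $\bb S^l$, fixing $\R^m$. By Schur's lemma the centralizer splits as $Z_{\O(l+1)}(G)\times\O(m)$. The factor $Z_{\O(l+1)}(G)$ is the group of $G$-equivariant isometries of $\bb S^l=G/H$, i.e.\ $N(H)/H$ intersected with the linear centralizer. Reading off Table \ref{tab:fixhom:homospheres}, this is $\bb Z_2=\{\pm1\}$ for the real-type irreducible cases 1, 3.2, 4, 5, 6. It is $\U(1)$ for the complex cases 2, 2.1, 3.1 and $\Sp(1)$ for case 3, according to the type of the representation of $G$ on $\R^{l+1}$.

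In case b), I work upstairs on $W=\bigoplus k_iW_{q_i}$. The group $G$ (one of $\SU(n+1)$, $\U(n+1)$, $\Sp(n+1)$, $\Sp(n+1)\U(1)$) sits inside the block $\U(k_i)$ acting on $\C^{n+1}$ (or $\C^{2n+2}$) $\subset k_iW_{q_i}$. Its centralizer in $\U(k_0)\times\dots\times\U(k_r)$ is the scalars on that subspace times the full unitary groups on the complement. The scalars get absorbed by $\S^1_q$ modulo the center. One must also check whether complex conjugation, composed with something, commutes with $G$. This can only happen when the representation of $G$ is self-conjugate, which is the $n=0$ case of $\Sp(1)=\SU(2)$ and produces the extra $\rtimes\bb Z_2$. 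For $\bb H\bb P(V)$ with $G=\Sp(n+1)\subset\Sp(k_0)$, the centralizer is $\Sp(k_0-n-1)\times\prod_{i\ge1}\Sp(k_i)$ modulo $\bb Z_2$. For $\Ca\bb P^2$, the centralizer of $\Spin(9)$ in $F_4$ is its center, which is trivial; the claimed $\bb Z_2$ has to come from the covering $\Spin(9)$ center, which I would verify.

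\textbf{Main obstacle.} The hard part is the quotient bookkeeping: identifying precisely which finite subgroup (the stated $\bb Z_{q_i}$) of the product of unitary groups is divided out once the scalars are absorbed by $\S^1_q$, together with the conjugation case at $n=0$. I would compute this by writing the centralizer in $N_{\O}(\S^1_q)$ and taking the image modulo $\S^1_q$ explicitly.
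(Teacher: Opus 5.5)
Your reduction is the paper's own: lift the action to $\tilde\OO$, apply Theorem~\ref{main:fphom} and Lemma~\ref{lem:fixhom:linear}, then compute centralizers in the isometry groups of Table~\ref{tab:weightedcrosses}. The part placing $\Gamma$ in the centralizer is fine. The gaps are in the centralizer computation, where two of your claims are false, so the list is not established by your argument.

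\textbf{The Cayley plane.} For $\Ca\bb P^2$, $Z_{F_4}(\Spin(9))$ is not trivial, and no covering is involved. $\Spin(9)$ is the isotropy subgroup of $F_4$ at the fixed point $o$. Its center $\{\pm1\}$ is therefore a subgroup of $F_4$ commuting with $\Spin(9)$. The element $-1$ acts by $-\mathrm{id}$ on $T_o\Ca\bb P^2\cong\R^{16}$ (the spin representation), so it is the geodesic symmetry at $o$. Conversely, $\Spin(9)$ has full rank in $F_4$. Hence its centralizer lies in $Z_{F_4}(T)=T\subset\Spin(9)$ for a maximal torus $T$. This gives $Z_{F_4}(\Spin(9))=Z(\Spin(9))\cong\bb Z_2$.

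\textbf{The antiholomorphic component in case b)\,3.} Your criterion is correct: an antiunitary element commuting with $G$ exists only if the representation of $G$ on the relevant block is self-conjugate. The false step is that this happens only for $n=0$. The defining representation of $\Sp(n+1)$ on $\C^{2n+2}=\HH^{n+1}$ is of quaternionic type, hence self-conjugate, for every $n$. Explicitly, let $\Sp(n+1)$ act on the left and $\C$ on the right, and set $\sigma(v,w)=(vj,\bar w)$ on $W=\HH^{n+1}\oplus W'$. Then $\sigma$ is antiunitary, satisfies $\sigma\rho_q(z)\sigma^{-1}=\rho_q(\bar z)$, and commutes with $\Sp(n+1)$. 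It therefore induces an antiholomorphic element of $I_\std(\cp(W))$ centralizing $G$; on $\cp^{2n+2}$ it is $[v:w]\mapsto[vj:\bar w]$.

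So your method, carried out correctly, produces the factor $\rtimes\bb Z_2$ for every $n\ge 0$. It cannot yield the $n\ge1$ line as written, which appears to omit this component. The paper's one-sentence proof does not address this either.

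\textbf{Quotient bookkeeping for $\hp(V)$.} The same kind of bookkeeping you deferred also matters here. For connected $G$, the preimage of the centralizer in $\prod\Sp(k_i)$ is $\{\pm1\}\times\Sp(k_0-n-1)\times\prod_{i\ge1}\Sp(k_i)$, with $\pm1$ acting on $\HH^{n+1}$. Its quotient by the diagonal $\bb Z_2$ is isomorphic to $\Sp(k_0-n-1)\times\prod_{i\ge1}\Sp(k_i)$ itself, not to a further $\bb Z_2$-quotient. For example, for $\Sp(1)$ on $\hp^1=\bb S^4$ the centralizer is $\Sp(1)$, not $\SO(3)$. This computation needs to be done, rather than matched to the stated answer.
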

\begin{proof}
    This easily follows from Theorem~\ref{main:fphom} and Lemma~\ref{lem:fixhom:linear} by computing the centralizers in the isometry groups from Table~\ref{tab:weightedcrosses}. 
\end{proof}

We obtain the following corollary
\begin{cor}\label{cor:fphom:maxsymmetry}
    Let $\OO^n$ be a positively curved orbifold, then:
    \begin{enumerate}
        \item The symmetry rank of $\OO$ is bounded above by $\lfloor \frac{n-1}{2}\rfloor$.
        \item If equality holds in 1., then the universal cover of $\OO$ is diffeomorphic to a sphere or a a weighted complex projective space. In fact the diffeomorphism is equivariant by the maximal torus in $I(\OO)$
    \end{enumerate}
\end{cor}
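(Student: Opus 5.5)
The plan is to reduce the statement to Theorem~\ref{main:A} (respectively Theorem~\ref{main:fphom} in the effective case) by producing a circle inside a maximal torus that acts fixed point homogeneously. Throughout I read the bound as $\lfloor\frac{n+1}{2}\rfloor$, as in Corollary~\ref{main:maxsymmetryrank}. Let $T^k$ be a torus acting isometrically and effectively on $\OO$.

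\textbf{Step 1: pass to the underlying space.} I first pass to the coarse quotient. The underlying space $\abs{\OO}$ of the effective model of $\OO$, with the induced length metric, is a positively curved Alexandrov space of dimension $n$. Moreover, $T^k$ acts on it effectively and isometrically, possibly after dividing out the finite ineffective kernel coming from the ineffective part of $\OO$; this does not change the rank.

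\textbf{Step 2: the rank bound and a codimension-two fixed circle.} Here I invoke the Alexandrov space result of Harvey and Searle \cite{HarSea17}. It gives $k\le \lfloor\frac{n+1}{2}\rfloor$, which is part~1. In the equality case it also gives a circle $\S^1\subset T^k$ whose fixed point set has a component $\abs F$ of codimension $2$, corresponding to a boundary component of $\abs{\OO}/\S^1$.

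I then have to promote this to an orbifold statement: $F$ is a suborbifold of codimension $2$, and its normal space at a generic point is $\C/\Gamma$ for a finite cyclic $\Gamma$. The space of unit normal directions is then a circle, on which $\S^1$ acts nontrivially and hence transitively. This makes the $\S^1$-action fixed point homogeneous. I expect this transfer to be the main obstacle, namely checking that codimension two in the Alexandrov sense really yields a codimension-two fixed suborbifold rather than, say, a reflector-type boundary stratum. A circle cannot fix a reflection hyperplane with its normal direction, since a circle acting on a $1$-dimensional normal space $\R/\Z_2$ is trivial. So I would argue via the slice representation at a generic point of $F$, which is a linear $\S^1$-action on $\R^n/\Gamma_x$.

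\textbf{Step 3: classify the universal cover.} Next I lift to the universal orbifold cover $\tilde\OO$. The identity component of the lifted torus action is again a torus of the same rank acting isometrically, and the lifted circle is still fixed point homogeneous. Theorem~\ref{main:A} applied to $\S^1$ leaves only $\bb S^n$ or a weighted complex projective space $\cp[p_0,\dots,p_n]$. The Cayley plane and the spaces $\hp(V)$ drop out because, by Table~\ref{tab:examples}, they admit no fixed point homogeneous circle actions.

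\textbf{Step 4: equivariance for the whole torus.} Finally, $T^k$ commutes with $\S^1$, so it acts by $\S^1$-equivariant diffeomorphisms. By Lemma~\ref{lem:fixhom:linear}, after the equivariant diffeomorphism of Theorem~\ref{main:A}, $T^k$ acts linearly and lies in the centralizer of $\S^1$ in $I_\std(\tilde\OO)$. Since $k$ equals $\rk I_\std(\tilde\OO)$ by Table~\ref{tab:weightedcrosses}, $T^k$ is a maximal torus there. Conjugating within $I_\std(\tilde\OO)$, which acts by isometries of the standard model, makes the diffeomorphism equivariant with respect to the standard maximal torus. This gives part~2.
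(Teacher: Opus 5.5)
Your proposal is correct and follows the same route as the paper. Harvey--Searle gives the rank bound and a fixed point homogeneous circle inside the maximal torus, Theorem~\ref{main:fphom} (or Theorem~\ref{main:A}) identifies the universal cover, and Lemma~\ref{lem:fixhom:linear} places the whole torus linearly inside the centralizer in $I_\std$, which yields the equivariance. Your reading of the bound as $\lfloor\frac{n+1}{2}\rfloor$ is the correct one, and your extra steps fill in what the paper's three-line proof leaves implicit: the slice-representation check that the Alexandrov codimension-two fixed set is an orbifold fixed point homogeneous component, the lift to $\tilde\OO$, and the full-rank conjugation.
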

\begin{proof}
    1. was shown in \cite{HarSea17}. For 2. let $\T^k$ be the maximal torus of the isometry group of $\OO$. By \cite[Theorem~6.1]{HarSea17} there is an $\S^1\subseteq \T^k$, such that $\S^1$ acts fixed point homogeneous on $\OO$. Now the result follows from Theorem \ref{main:fphom} and the equivariance of the diffeomorphism from Lemma \ref{lem:fixhom:linear}.
\end{proof}
To start the proof of theorem \ref{main:fphom}, we need the following
\begin{lem}\label{lem:fixedpoints}
    Let $\OO$ be a Riemannian orbifold and $G$ a Lie Group acting by isometries. Then the following hold:
    \begin{enumerate}
        \item The fixed point set is a totally geodesic closed subset.
        \item If for all $p \in \OO ^G$ we have $\gcd(\vert G/G_0\vert, \vert \Gamma_p \vert)  = 1$, any component of $\OO^G$ is a totally geodesic  suborbifold of $\OO$.
    \end{enumerate}
\end{lem}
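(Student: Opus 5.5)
The plan is to work locally in orbifold charts and reduce everything to the manifold case, using that isometries of a Riemannian orbifold lift locally to isometries of the charts. First I show that $\OO^G$ is closed: the action map $|\Phi|\colon G\times|\OO|\to|\OO|$ is continuous, so $\OO^G=\bigcap_{g\in G}\{x : g x = x\}$ is an intersection of closed sets. Then I fix $p\in\OO^G$ and choose a small convex chart $(\hat U,\Gamma_p)$ around $p$, a $\Gamma_p$-invariant metric ball around a lift $\hat p$ inside the injectivity radius. Isometries fixing $p$ map this ball to itself. For each $g\in G$, a local lift $\hat g$ to $\hat U$ fixing $\hat p$ is determined up to composition with elements of $\Gamma_p$. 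So we get an extension
\[
1\to\Gamma_p\to\hat G_p\to G\to 1,
\]
where $\hat G_p$ is the group of all such lifts, acting isometrically on the ball and fixing $\hat p$. Via $\exp_{\hat p}$ this action is linearised: $\hat G_p$ acts linearly on $T_{\hat p}\hat U$.

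For part 1, a point $x=[\hat x]$ near $p$ is fixed by $G$ exactly when the $\hat G_p$-orbit of $\hat x$ lies in $\Gamma_p\hat x$. Taking $\hat x=\exp(v)$, this says $\hat G_p v\subseteq\Gamma_p v$ in the linear representation. Call the set of such $v$ the cone $C$. It is a finite union of linear subspaces: for each $v$ and each $g$ there is a $\gamma$ with $\hat g v=\gamma v$, so $v$ lies in $\ker(\hat g-\gamma)$. Hence $\OO^G$ near $p$ is the image of $\exp(C)$. Since $C$ is a union of linear subspaces and $\exp$ maps lines through $\hat p$ to geodesics, a geodesic starting at $p$ tangent to $C$ stays in $\OO^G$, which gives total geodesicity in the sense appropriate for closed subsets. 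A cleaner route is to argue directly: if $\gamma$ is the unique short geodesic between two nearby fixed points, then $g\gamma$ is a geodesic with the same endpoints, so $g\gamma=\gamma$. This also works on the coarse quotient.

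For part 2 I need the fixed set to be a suborbifold near $p$. This means finding a linear subspace $\hat W\subseteq T_{\hat p}\hat U$, invariant under some subgroup of $\Gamma_p$, whose image is exactly $C/\Gamma_p$. I first treat the identity component $G_0$. Because $G_0$ is connected and $\Gamma_p$ is finite, the identity component of $\hat G_p$ maps onto $G_0$ with finite kernel and fixes the correct lift. Its fixed space $\hat W_0$ is a linear subspace, and $\exp(\hat W_0)/N$ is a chart for a component of $\OO^{G_0}$, where $N$ is the stabiliser of $\hat W_0$ in $\Gamma_p$. This step is the standard torus or identity-component argument.

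The main obstacle is the finite group $G/G_0$ acting on $\OO^{G_0}$. Its lift to the chart of $\OO^{G_0}$ is an extension of $G/G_0$ by the local group $N\le\Gamma_p$. By the coprimality hypothesis and Schur–Zassenhaus, this extension splits, and all complements are conjugate. So $G/G_0$ lifts to a genuine subgroup $S\subseteq\hat G_p$ acting linearly. The fixed set should then be $\exp(\hat W_0^{S})$ modulo the stabiliser, once I check that every $v\in C$ is $\Gamma$-conjugate to a vector fixed by a complement. For that I use the conjugacy of complements. Given $v$, the set of lifts fixing $v$ surjects onto $G/G_0$, so it contains a complement, which is conjugate to $S$ by an element of $N$. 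This is exactly where $\gcd(|G/G_0|,|\Gamma_p|)=1$ is used. A linear subspace with the induced finite group action is an orbifold chart, so the component of $\OO^G$ is a suborbifold, and totally geodesic by part 1.
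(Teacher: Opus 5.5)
Your proof of part 2 follows the paper's argument: you take the fixed space of the identity component of your $\hat G_p$ (the paper's $K_p$) to get a $\Gamma_p$-invariant subspace charting $\OO^{G_0}$, then handle $G/G_0$ on $\OO^{G_0}$ with a Schur--Zassenhaus complement, showing that the stabiliser of a fixed direction contains a complement that is $\Gamma_p$-conjugate to the chosen one. The one step to drop is your ``cleaner route'' for part 1, because short geodesics between nearby orbifold points need not be unique: in $\R^2/\{\pm 1\}$ the fixed points $[(\delta,0)]$ and $[(0,\delta)]$ of $\diag(1,-1)$ are joined by two shortest paths, which this reflection swaps and neither of which lies in the fixed set, so part 1 holds only in the local sense your cone argument gives.
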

\begin{proof}\mbox{}
    \begin{enumerate}
        \item This is a simple fact from Alexandrov geometry.
        \item Let $F$ be a component of $\OO^G$ and $p \in F$. Let $U \subseteq \abs{\OO}$ be a $G$-invariant neighbourhood of $p$ and $(\hat U,\Gamma_p)$ a local model of $\OO$, such that $0 \in \hat U$ and $p = [0] \in \hat U/\Gamma_p$. There exists a lift of $G$ given by $1 \to \Gamma_p \to K_p \to G \to 1$, such that $K_p$ extends the action of $\Gamma_p$ on $\hat U$. Now consider the tangent space $T_0\hat U$. The tangent cone at $p$ is given by: $C_p\OO \cong T_0\hat U/\Gamma_p$. We are interested in $C_p(\OO)^G$. The lift of this space to $T_0\hat U$ is given by
        $$A = \left\lbrace v \in T_0\hat U \setvert \forall k \in K_p, \exists \, \gamma \in \Gamma_p \colon kv = \gamma v\right\rbrace$$
        Obviously $0 \in A$. We claim that $A$ is a linear subspace of $T_0\hat U$\\ 
        First we assume, that $G$ is connected. If $A = \lbrace 0 \rbrace$, we are done. Hence let $v \in A \setminus \lbrace 0\rbrace$. Since $G$ is connected, we have $\Gamma_p\cdot (K_p)_0 = K_p$. Let $K_0, \ldots,K_r$, be the components of $K_p$ with $K_0 = (K_p)_0$. Then there exists $\gamma_0, \ldots, \gamma_r\in \Gamma$, such that $\gamma_i^{-1}\cdot K_i = K_0$. We choose $\gamma_0 = \Id$. Since the action is smooth and $\Gamma_p$ is discrete, we have $k v = v$ for all $k \in K_0$ and $v \in A$, because of $\Id(v) =v$. Therefore for any $k \in K_i$ there exists $k^\prime\in K_0$, such that, we have $k v = \gamma_ik^\prime v = \gamma_i v$ and hence $\gamma_i^{-1} k v = v$ for each $i = 1, \ldots,r$. Thus $A = \Fix(K_0)$ and hence is a subspace of $T_0\hat U$. we denote the kernel of the $\Gamma_p$-action on $A$ by $\Gamma_A$. By applying the exponential map to the subspace $A$, we get an orbifold chart of $F$, with orbifold group given by $\Gamma_p/\Gamma_A$.\\
        Now let $G$ be finite. Since the order and the index of $\Gamma_p$ in $K_p$ are relatively prime by the Schur-Zassenhaus theorem (\cite[Theorem~7.41]{Rotman2012-zb})  there exists a complement $L_p$ of $\Gamma_p$ in $K_p$, i.e. $K_p = L_p \cdot \Gamma_p$ and $L_p \cong G$. We claim, that the subspace $(T_0\hat U)^{L_p} \subseteq A$ maps surjectively on $C_p(\OO)^G$. Let $[x] \in C_p(\OO)^G$. Then the stabilizer $(K_p)_x$ of $x$ in $K_p$ maps surjectively on $G$: Let $g \in G$  and $\tilde g\in K_p$ a lift of $g$. Then there exists $\gamma\in \Gamma$ with $gx = \gamma x$. Hence, $\gamma^{-1} gx = x$ and therefore the $(K_p)_x \to G$ is surjective, with kernel $(\Gamma_p)_x$. Hence again by the Schur-Zassenhaus theorem there is a complement $L_x$ of $(\Gamma_p)_x$ in $(K_p)_x$. $L_x$ is also a complement of $\Gamma_p$ in $K_p$: Suppose there exists $l \in L_x \cap \Gamma_p$. Since $L_x \subseteq (K_p)_x$, we have that $l \in (\Gamma_p)_x$ and hence $l = e$. By Theorem 7.42 in \cite{Rotman2012-zb} and the Feit-Thompson Theorem  (\cite[Theorem 1]{FeitThompson1962}), we have that $L_x$ and $L_p$ are conjugate by an element $\gamma \in \Gamma_p$. Hence $L_p$ fixes $\gamma^{-1}x$. Since $[\gamma^{-1}x] = [x]$, the claim follows and hence $\OO^G$ is a totally geodesic suborbifold as well.\\
        For the full statement, we apply the second case to $\OO^G = (\OO^{G_0})^{G/G_0}$.
    \end{enumerate}
\end{proof}
\begin{rem}
    There is a counterexample if the condition on the order of $G/G_0$ and $\Gamma_p$ is not fulfilled. Consider $\bb D^2/\bb Z_{2k}$, with a $\bb Z_2$ action a reflection at a plane. Then $(\bb D^2/\bb Z_{2k})^{\bb Z_2}$ is not a suborbifold, since there is no subspace in $\bb D^2$, that maps surjectively to the fixed point set.    
\end{rem}
We start with the following structure result, which is a generalization of the manifold case \cite{Grove1997-xz}:
\begin{theorem}[Structure Theorem]\label{thm:strucure}
Let $\OO$ be a Riemannian orbifold with positive sectional curvature with an (almost) effective isometric fixed-point homogeneous $G$-action and $\OO^G \neq\emptyset$. Let $F$ be a component of $\OO^G$ with maximal dimension, then:
\begin{enumerate}
    \item There is a unique orbit $G\cdot p$ at maximal distance to $F$ (the Soul-orbit). Moreover $(G\cdot p)_{\eff} \cong G/G_p$. 
    \item All $G_p$-orbits on the space  $\V_p^1(G\cdot p) =\bb S^l/\Gamma_p$ of normal directions to $G\cdot p$ are almost principal. Moreover principal orbits are contained in $\OO_{reg}$. The principal orbits are isomorphic to $G_p/H$, where $H$ denotes the principal isotropy group, and $F_{\eff} \cong (\V_p^1(G\cdot p)/G_p)_{\eff} $
    \item There is a $G$-equivariant decomposition of $\OO$ as $$\OO = D(F) \cup_E D(G \cdot p),$$
    where $D(F)$ and $D(G \cdot p)$ denote the normal bundles of $F$ and $G \cdot p$ with common boundary $E$ in $\OO$.
    \item All orbits in $\OO \setminus F \cup G \cdot p$ are almost principal. And the principal orbits are contained in $\OO_{\reg}$ and diffeomorphic to $\V^1_q(F) \cong G/H$ for an intrinsically regular point $q\in F$.
\end{enumerate}
\end{theorem}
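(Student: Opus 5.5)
The plan follows the Grove--Searle argument, carried out on the Alexandrov space $X=\abs{\OO}/G$. First, $X$ has curvature $>0$ in the Alexandrov sense, since $\abs{\OO}$ is locally a quotient of a positively curved Riemannian manifold by a finite group and we then divide by $G$. By Lemma \ref{lem:fixedpoints}(1), $F$ is a closed totally geodesic subset; we take it of maximal dimension. Fixed point homogeneity says that the space of directions of $X$ at a point of $\bar F=\pi(F)$ is a cone over a point of the normal sphere quotient $\V^1_q(F)/G$, which is a single point. Hence $\bar F$ is a boundary component of $X$ (the space of directions is a hemisphere-type cone), and $\dim X = \dim F +1$.

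Next we apply Perelman's concavity of the distance function from the boundary, as in the soul theorem for Alexandrov spaces. Under positive curvature $d_{\bar F}$ is strictly concave on $X$, so its maximum is attained at a unique point $\bar p$. This proves (1): $G\cdot p$ is the unique orbit at maximal distance. The identification $(G\cdot p)_{\eff}\cong G/G_p$ should follow from the slice theorem, lifting to the frame bundle $\Fr\OO$. There $G$ acts by a lift $1\to\Gamma_p\to K_p\to G\to 1$, and the orbit $K_p\cdot\hat p$ projects to $G/G_p$ after dividing by the ineffective kernel.

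For (3) and (4), strict concavity with no critical points other than $\bar p$ lets us use a gradient-like flow (Grove--Shiohama/Perelman critical point theory). It shows that every point of $X\setminus(\bar F\cup\{\bar p\})$ lies on a unique minimal segment from $\bar F$ to $\bar p$, and $X$ is homeomorphic to a cone over $\bar F$ with tip $\bar p$. The unique-segment property implies that the isotropy along interior points is constant and equals the isotropy of the direction in $\V^1_q(F)$, namely $H$ with $G/H\cong \V_q^1(F)$ for $q$ intrinsically regular in $F$. This gives the almost principal statement. Lifting via exponential maps of the normal bundles of $F$ and of $G\cdot p$ (equivariant orbifold tubular neighbourhoods, available via $\Fr\OO$) yields the decomposition $\OO=D(F)\cup_E D(G\cdot p)$. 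Principal orbits lie in $\OO_\reg$ because $\Gamma$ is trivial on an open dense set and the local groups are constant along the open stratum.

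For (2), the normal sphere $\V_p^1(G\cdot p)=\bb S^l/\Gamma_p$ is, by the slice theorem, $G_p$-equivariantly the space of directions at $\bar p$. Its $G_p$-orbit space is the space of directions $\Sigma_{\bar p}X$, and the segment picture identifies it with $\bar F$. Since every direction at $\bar p$ points along a segment into the open stratum, all $G_p$-orbits are almost principal with isotropy $H$. Comparing the two ends, $F_\eff\cong(\V^1_p(G\cdot p)/G_p)_\eff$ via the endpoint map.

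The main obstacle is this last identification at the level of orbifold structure rather than underlying spaces. Here the local groups $\Gamma_p$ and the extensions $K_p$ interact, and the effective quotients must match. I would handle it by working on $\Fr\OO$, where everything is a manifold with a $G\times \O(n)$ action. There the manifold slice theorem gives the bundle structures, and one divides by the generic isotropy to pass to effective models.
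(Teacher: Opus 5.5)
Your overall architecture matches the paper's. You use Perelman's concavity of $d_{\bar F}$ on $X=\abs{\OO}/G$ to get the unique soul orbit, and critical point theory for the rest. However, one step you rely on is false. A gradient-like flow for $d_{\bar F}$ does not show that every point of $X\setminus(\bar F\cup\{\bar p\})$ lies on a (unique) minimal segment from $\bar F$ to $\bar p$. Flow lines are not segments, and strict concavity does not force the cut locus of $\bar F$ to be $\{\bar p\}$.

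Here is a counterexample. The ellipsoid $\{\abs{a}^2+b_1^2+b_2^2/\lambda^2=1\}\subset\C\times\R^2$ with $\lambda>1$, and $\S^1$ rotating $a$, is positively curved and fixed point homogeneous with $F=\{a=0\}$.
\begin{enumerate}
\item Its quotient $X$ is the half $r\ge 0$ of the $2$-ellipsoid $r^2+b_1^2+b_2^2/\lambda^2=1$, and $\bar p=(1,0,0)$ by symmetry.
\item Along the meridian $b_2=0$ one sees $d(\bar F,\bar p)\le\pi/2$.
\item The meridian $b_1=0$ leaves $\bar F$ perpendicularly but has length $>\pi/2$ up to $\bar p$. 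So it has a cut point $c\neq\bar p$ relative to $\bar F$, and $c$ lies on no minimal segment from $\bar F$ to $\bar p$.
\end{enumerate}
Consequently the arguments you base on segments do not go through as written. This covers the constant isotropy in the interior, the almost principality of the $G_p$-orbits on $\V^1_p(G\cdot p)$, and the ``endpoint map'' $\bar F\to\Sigma_{\bar p}X$.

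Your argument that principal orbits lie in $\OO_{\reg}$ also fails, because local groups are not ``constant along the open stratum''. Take the circle action $z\ast[a_0:a_1:a_2]=[za_0:za_1:z^2a_2]$ of Example \ref{exa:fp:S1} on $\cp^2[2,3,6]$, which fixes $F=\{a_0=0\}$. The points $[\epsilon:0:1]$ have trivial $G$-isotropy, like generic points, but local group $\Z_2$. Open density of $\OO_{\reg}$ cannot exclude this. ``Principal'' has to be read for the lifted action (e.g.\ $G\times\O(n)$ on $\Fr\OO$), which sees local groups.

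The repair is to use what critical point theory actually provides. There is a $G$-invariant gradient-like vector field on $\OO\setminus(F\cup G\cdot p)$ whose flow gives a $G$-equivariant orbifold diffeomorphism $\OO\setminus(F\cup G\cdot p)\cong E\times(0,1)$. Isotropy groups and local groups are constant along flow lines, not along strata. So (2)--(4) reduce to understanding a collar of $F$, and then reading off the orbifold $E/G$ from both ends.

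The paper supplies that collar by a direct chart computation rather than through $\Fr\OO$:
\begin{enumerate}
\item For $q\in F$ with local lift $1\to\Gamma_q\to K_q\to G\to 1$, the identity component $K_0$ fixes exactly the lift of $T_qF$.
\item $K_0$ is transitive on the unit sphere of the orthogonal complement, so locally $T_q\hat U/K_0\cong T_q\hat U^{K_0}\times[0,1)$.
\item The residual group $K_q/K_0\cong\Gamma_q/(\Gamma_q\cap K_0)$ then makes a neighbourhood of $\bar F$ in $X$ the orbifold $F_{\eff}\times[0,\epsilon)$.
\end{enumerate}
Transported by the flow, this gives $F_{\eff}\cong(\V^1_p(G\cdot p)/G_p)_{\eff}$. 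At an intrinsically regular $q$, the group $\Gamma_q$ acts freely on the unit normal sphere; together with the flow, this places the principal orbits in $\OO_{\reg}$.

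Your frame-bundle plan could work, but it would have to reproduce exactly this local computation when you pass to effective quotients. There is also a minor slip: at $p$ the local lift is $1\to\Gamma_p\to K_p\to G_p\to 1$, not onto $G$.
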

\begin{proof}
    We will assume that $G$ is connected. We begin with some preliminaries. Let $\pi \colon \OO \to \OO/G$ be the quotient map and equip the quotient, with the orbit distance metric. Furthermore let $\bar F \coloneqq \pi(F)$.\\
    Claim 1. follows as in \cite{Grove1997-xz} from an easy application of the Soul Theorem for Alexandrov space \cite{Perelman}. \\
    \textbf{Claim:} A small $\epsilon$-neighborhood $B_\epsilon(\bar F)$ in $\OO/G$ is diffeomorphic to an orbifold with boundary $F_{\eff}$ and $\bar F = \vert F_{\eff}\vert$.\\
    To see this let $q \in \abs{F}$ and $U \subseteq\abs{\OO}$ and open subset together with a local model $(\hat U, \Gamma_q)$. Since $p$ is a fixed point of the $G$-action, we get a local lift $1 \to \Gamma_q\to K_q\to G \to1$ acting on $\hat U$. By the proof of Lemma~\ref{lem:fixedpoints} the tangent directions of $F$ correspond to the fixed points of $K_0$ in $T_q\hat U$. Thus we get a splitting $$T_q\hat U \cong T_q\hat U^{K_0} \oplus (T_q\hat U^{K_0})^\perp$$
    The elements in $(T_q\hat U^{K_0})^\perp$ correspond to the normal direction of $F$. Now we first take the quotient by $K_0$ and get a diffeomorphism
    $$T_q\hat U/K_0 \cong T_q\hat U^{K_0} \times [0,1).$$
    $K_q/K_0 \cong \Gamma_q/(\Gamma_q\cap K_0)$ now acts on this quotient, and we can endow the quotient with the local structure of $F_{\eff}\vert_{U}$. With this, the quotient map also becomes smooth. \\
    Together with critical point theory of distance functions on orbifolds (see \cite{KL14}) this shows the remaining claims.
\end{proof}
\begin{lem}[Uniqueness Lemma]\label{lem:unique}
    Let $\OO$ and $\hat \OO$ be two Riemannian $G$-orbifolds with the structure of Theorem \ref{thm:strucure}, i.e. there exist components $F\subseteq \OO^G$ and $\hat F \subseteq \hat \OO$ and orbits $G\cdot p \subseteq \OO$ and $G\cdot \hat p \subseteq \hat \OO$, such that 2. -- 4. hold. Furthermore assume, that the orbibundles $D(G\cdot p) \to G/G_p$ and $D(G \cdot \hat p) \to G/G_{\hat p}$ are isomorphic. Additionally, let one of the following conditions hold:
    \begin{enumerate}
        \item $\codim F = \codim \hat F \ge 3$ or
        \item  $\codim F = \codim \hat F = 2$ and additionally the local groups of (possibly generically singular) manifold points of $F$ and $\hat F$ are isomorphic.
    \end{enumerate}
    Then $\OO$ and $\hat \OO$ are $G$-equivariantly diffeomorphic.
\end{lem}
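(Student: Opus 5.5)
The plan is to imitate the gluing argument of Grove–Searle. We fix a $G$-equivariant isomorphism $\Phi\colon D(G\cdot p)\to D(G\cdot\hat p)$ of orbibundles. By item 3 of Theorem~\ref{thm:strucure}, $\OO = D(F)\cup_E D(G\cdot p)$ and $\hat\OO = D(\hat F)\cup_{\hat E} D(G\cdot\hat p)$. The task is therefore to extend $\Phi|_E\colon E\to\hat E$ to a $G$-equivariant diffeomorphism $D(F)\to D(\hat F)$. We then glue the two pieces, using equivariant collars along $E$ so that the glued map is smooth.

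\textbf{Step 1: describe $D(F)$ through the orbit space.} The normal disk bundle of $F$ is the cone bundle over $E$ with respect to the $G$-action. By item 4 of Theorem~\ref{thm:strucure} and the Claim in its proof, $E/G$ is identified with $F_{\eff}$. Since $G$ acts transitively on the normal sphere (up to the local group), $D(F)$ is recovered from $E$ as the mapping cylinder of $E\to E/G\cong F_{\eff}$, collapsing each orbit $G/H$ to a point. The map $\Phi|_E$ is $G$-equivariant, so it induces a diffeomorphism $\bar\Phi\colon F_{\eff}\to\hat F_{\eff}$ on the quotients. We extend radially, $[e,t]\mapsto[\Phi(e),t]$, which is equivariant and a homeomorphism. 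The remaining issue is smoothness along $F$ in the orbifold sense.

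\textbf{Step 2: smoothness at $F$.} This is where the two hypotheses enter, and it is the main obstacle. Near $q\in F$ we take a local lift $1\to\Gamma_q\to K_q\to G\to 1$ acting linearly on the normal space $\nu_q\cong\R^{k+1}$, with $K_0$ transitive on the unit sphere. The radial extension is smooth in orbifold charts precisely when the induced identification of normal representations lifts to a $K_q$-equivariant linear isometry $\nu_q\to\hat\nu_{\hat q}$ covering the given map on the normal sphere. That map is a $K_0$-equivariant map between spheres $K_0/H_0$.

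\emph{Case $\codim F\ge 3$.} The normal sphere is simply connected. The local group therefore acts trivially on it up to the generic kernel, and $K_0$-equivariant self-maps of a homogeneous sphere are governed by $N(H)/H$ as in Table~\ref{tab:fixhom:homospheres}. Each such map is induced by a linear map: we use \cite[Sublemma~2.6]{Grove1997-xz} applied fiberwise. Hence the radial extension is smooth, as in the manifold case.

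\emph{Case $\codim F=2$.} The normal "sphere" is a circle, and a priori the normal data is $\R^2/\Z_m$ with a rotation action whose weight is not determined by $E$. This reflects the weighted $\cp^n$ phenomenon from Example~\ref{exa:fp:S1}. At an intrinsically manifold point of $F$, the isomorphism type of the local group determines $m$, because the normal action is the standard $\S^1$ on $\C/\Z_m$. Equality of these local groups for $F$ and $\hat F$ then lets us match the normal representations $z\mapsto z^{m}$. Since the effective circle action on the normal circle is then determined, the radial map lifts to the chart rotation and is smooth.

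\textbf{Step 3: extend over singular strata of $F$ and glue.} The local groups are constant along strata. Using the decomposition $T_q\hat U=T_q\hat U^{K_0}\oplus(T_q\hat U^{K_0})^\perp$ from the proof of Lemma~\ref{lem:fixedpoints}, smoothness follows on all of $F$ once it is known on the dense manifold part and is compatible with the $\Gamma_q$-action. That compatibility holds because $\bar\Phi$ is already an orbifold diffeomorphism of $F_{\eff}$. Finally, we glue $\Phi$ on $D(G\cdot p)$ with the extension on $D(F)$ along $E$, using equivariant collars, to obtain the claimed $G$-equivariant diffeomorphism $\OO\cong\hat\OO$.
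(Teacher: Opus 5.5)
Your overall strategy is the paper's: restrict the bundle isomorphism to $E$, extend radially over $D(F)$, and get smoothness from \cite[Sublemma~2.6]{Grove1997-xz}. The gap is in the case $\codim F\ge 3$, which is where the actual content of the lemma lies. You argue that because the normal sphere is simply connected, the local group acts trivially on it up to the generic kernel. This is wrong, and it conflates the chart sphere $\bb S^k$ with the space of normal directions. At an intrinsically regular point $q\in F$ one has $\V^1_qF\cong \bb S^k/\Gamma_q$, with $\Gamma_q$ acting freely on $\bb S^k$. Nothing forces $\Gamma_q$ to be trivial at this stage; that is only established later, in special situations such as Lemma~\ref{fixpoint:lem:Lconnected}. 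As a result, you never verify the criterion you yourself state at the start of Step 2. To have a $K_q$-equivariant linear map $\nu_q\to\hat\nu_{\hat q}$ at all, you need $\Gamma_q\cong\hat\Gamma_{\hat q}$ and a lift of the given map on normal spaces to the chart spheres. Your codimension-$\ge 3$ paragraph supplies neither, so the hypothesis $\codim F\ge 3$ is never genuinely used.

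The missing step is short and is exactly the paper's argument. Since $\Phi$ is $G$-equivariant, it carries the orbit $\V^1_qF\cong\bb S^k/\Gamma_q$ in $E$ diffeomorphically onto the corresponding orbit $\bb S^k/\hat\Gamma_{\hat q}$ in $\hat E$. For $k\ge 2$ the chart sphere is the universal cover of this orbit. Hence $\Gamma_q\cong\pi_1(\bb S^k/\Gamma_q)\cong\pi_1(\bb S^k/\hat\Gamma_{\hat q})\cong\hat\Gamma_{\hat q}$, and the diffeomorphism lifts to a map $\bb S^k\to\bb S^k$. This lift is equivariant along an isomorphism $K_q\to\hat K_{\hat q}$ extending $\Gamma_q\cong\hat\Gamma_{\hat q}$. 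Only then does Sublemma 2.6 apply to give a linear, hence smooth, radial extension in orbifold charts. This also explains the dichotomy in the statement: for $k=1$ one has $\S^1/\Gamma\cong\S^1$, so $E$ cannot detect $\Gamma_q$, and its isomorphism type must be assumed, as in condition 2. With this step inserted, your codimension-$2$ argument and the gluing go through as in the paper. Your treatment of the singular strata of $F$ in Step 3 is at the same level of detail as the paper's.
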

\begin{proof}
    We will follow the proof of \cite{Grove1997-xz}. We show, that any equivariant isomorphism of  orbi bundles $f \colon D(G\cdot p) \to D(G \cdot \hat p)$ extends to an equivariant diffeomorphism $\OO \to \hat \OO$ under one of the assumptions in the uniqueness lemma. In the first condition note, that $\V^1_q(F) \cong \bb S^k/\Gamma_q$, $\V_{\hat q} (\hat F) \cong \bb S^k/\hat \Gamma_q$ at an intrinsically regular point of $F$, resp. $\hat F$. Since the bundles are isomorphic, we have $\Gamma_q \cong \pi_1(\bb S^k/\Gamma_q) \cong \pi_q(\bb S^k/\hat \Gamma_{\hat q}) \cong \hat \Gamma _{\hat q}$. At this point, we need $k\ge 2$, since $\S^1/\Gamma \cong \S^1$ for any finite group acting almost effectively on $\S^1$.  Hence in both cases we can assume, that the corresponding local groups at intrinsically regular points of the fixed point components are isomorphic. Let $\Gamma$ (resp. $\hat\Gamma$) be the isotropy of an intrinsically regular point of $F$ (resp. $\hat F$). The bundles $D(F) \to F_{\eff}$, resp. $D(\hat F) \to \hat F_{\eff}$ are orbibundles with fiber $\bb D^{k+1}/\Gamma$ and $\bb D^{k+1}/\hat \Gamma$. Since $\Gamma$ and $\hat\Gamma$ are isomorphic, this map has a unique radial extension, which is smooth by sublemma 2.6 in \cite{Grove1997-xz}. Therefore $f$ extends to a diffeomorphism of $\OO$ and $\hat \OO$. 
\end{proof}

From now let $\OO$ be an effective and simply connected orbifold with positive sectional curvature,  such that a compact Lie group $G$ acts fixed point homogeneous and effectively on $\OO$. Let $q\in F$  an intrinsically regular point and $\Gamma_q$ the local group. Then we have $\V_q^{\le1}F \cong \bb D^{l+1}/\Gamma_q$. Since $G$ acts transitively on $\V_q^1F$, it is an orbit and hence a manifold. Therefore, $\Gamma_q$ acts effectively free on $\bb S^l$. We get an extension $1 \to \Gamma_q \to K_q \to G \to 1$, such that $K_q$ acts transitively on $\bb S^l$, with isotropy group $H_q$. The preimage of the principal isotropy $L$ of the $G$ action under the quotient map $K_q \to G$ must be a $\Gamma_q$-extension of $H_q$. We denote by $K$ the unit component of $K_q$ and  by $H$, the unit component of $H_q$. We are now computing through the pairs of $(K,H)$ from the table. Since $\Gamma_q$ acts trivially on $F$, we can embed $\Gamma_q \cdot K = K_q$ into $\O(k+1)$, where $k+1 = \codim F$.

\begin{lem}\label{fixpoint:lem:doubledisc}
    Let $\OO = D^{k+1}(F) \, \cup_E D^{l+1}(S)$ a double orbidisc bundle,  where $E = \partial D^{k+1}(F) = \partial D^{l+1}(S)$. Then
    \begin{enumerate}
        \item $F \hookrightarrow\OO$ is orbi-$l$-connected.
        \item $S \hookrightarrow \OO$ is orbi-$k$-connected.
        \item $E \hookrightarrow \OO$ is orbi-$\min(k,l)$-connected.
    \end{enumerate}
\end{lem}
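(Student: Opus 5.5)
The plan is to reduce everything to the classical statement for double disc bundles, applied to Borel constructions. By definition, orbifold homotopy groups are the homotopy groups of the classifying spaces, so ``orbi-$m$-connected'' means that the induced map $BF\to B\OO$ is $m$-connected. In the effective setting we take $B\OO=E\O(n)\times_{\O(n)}\Fr\OO$. The decomposition $\OO=D(F)\cup_E D(S)$ then lifts to the frame bundle: $\Fr\OO=\Fr\OO|_{D(F)}\cup\Fr\OO|_{D(S)}$, glued along $\Fr\OO|_E$, and all three pieces are $\O(n)$-invariant. Applying the Borel construction gives a homotopy pushout
\[
B\OO\simeq BD(F)\cup_{BE} BD(S).
\]
Since $D(F)$ deformation retracts equivariantly onto the zero section $F$ (radial contraction in the orbibundle fibers, lifted to frames), we get $BD(F)\simeq BF$, where $BF$ is understood via the restriction of $\Fr\OO$ to $F$. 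This is the version relevant for ineffective suborbifolds. Similarly $BD(S)\simeq BS$.

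The key local input is the fiber of $BE\to BF$. The map $E\to F$ is the unit sphere bundle of the normal orbibundle, with fiber $\bb S^k/\Gamma$ in the orbifold sense. After lifting to frames it becomes an honest fiber bundle whose fiber is $\bb S^k$: the local group acts on the normal disc, and in the Borel construction the fiber over a point of $BF$ is the homotopy quotient $\bb S^k\times_{\Gamma}E\Gamma$ relative to $B\Gamma$. In other words, $BE\to BF$ is a fibration with fiber $\bb S^k$, because the local groups are absorbed into the base. Concretely, near a point with chart $(\hat U,\Gamma_q)$ the pair $(BD(F),BE)$ looks like $(\hat D^{k+1}\times\hat V)\times_{\Gamma_q}E\Gamma_q$ versus $(\bb S^k\times\hat V)\times_{\Gamma_q}E\Gamma_q$. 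So $BD(F)$ is the mapping cylinder of a disc bundle over $BF$ with sphere bundle $BE$. The same reasoning gives $BE\to BS$ with fiber $\bb S^l$.

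From this the three claims follow. The pair $(BD(S),BE)$ is a relative disc bundle with fibers $(D^{l+1},\bb S^l)$, so it is $l$-connected. By homotopy excision for the pushout, or more directly since $B\OO$ is obtained from $BD(F)$ by attaching a $D^{l+1}$-bundle along its boundary, the pair $(B\OO,BD(F))$ is also $l$-connected. Hence $F\into\OO$ is orbi-$l$-connected, which is claim 1. Claim 2 follows symmetrically with $k$. For claim 3, the sphere fibration $BE\to BF$ makes $BE\to BF\simeq BD(F)$ exactly $k$-connected, and composing with the $l$-connected map $BF\to B\OO$ shows that $BE\to B\OO$ is $\min(k,l)$-connected.

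The main obstacle is justifying that the orbibundle fiber contributes an honest $\bb S^k$, and not $\bb S^k/\Gamma$, to the Borel fibration. I would handle this by working in the groupoid or frame-bundle picture and checking the local models above explicitly, then gluing via a Mayer--Vietoris/Seifert--van Kampen-type homotopy pushout argument for Borel constructions. For the ineffective suborbifolds we must also use the restricted frame bundle rather than the intrinsic one, to keep the embedding data.
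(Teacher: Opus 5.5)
Your argument is correct, and it rests on the same key input as the paper's proof. Because the local groups act freely on frames, the decomposition of $\OO$ lifts to a decomposition of the manifold $\Fr\OO$ into honest disc bundles over $\Fr\OO|_F$ and $\Fr\OO|_S$. Where you differ is in where the connectivity argument is run.

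\textbf{The paper's route.} It stays upstairs. It observes that $\Fr\OO = D(\Fr\OO|_F)\cup D(\Fr\OO|_S)$ is a manifold double disc bundle and quotes the known manifold statement. It then transfers connectivity down to the classifying spaces via the fibrations $\Fr\OO|_F\to BF\to B\O(n)$ and $\Fr\OO\to B\OO\to B\O(n)$ over the common base, using the long exact sequences and the five lemma.

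\textbf{Your route.} You push the decomposition down first. You exhibit $B\OO$ as a homotopy pushout of $BD(F)\simeq BF$ and $BD(S)\simeq BS$ along $BE$, with $BE\to BF$ and $BE\to BS$ sphere fibrations. You then rerun the manifold argument directly on the Borel constructions.

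\textbf{Comparison.} The paper's version is shorter, since it outsources the topology to the manifold case. Yours is self-contained and avoids the low-degree five-lemma bookkeeping (where $\pi_0$ is only a set and $\O(n)$ is disconnected). The price is that you must check that the Borel construction of the collared decomposition is a homotopy pushout.

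\textbf{Three small remarks.}

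First, your ``main obstacle'' disappears if you phrase it as the paper does. The map $\Fr\OO|_E\to\Fr\OO|_F$ is an $\O(n)$-equivariant $\bb S^k$-bundle: locally it is $(\hat V\times \bb S^k\times\O(n))/\Gamma\to(\hat V\times\O(n))/\Gamma$, with $\Gamma$ acting freely on the $\O(n)$ factor. Applying $E\O(n)\times_{\O(n)}(-)$ to it yields an $\bb S^k$-bundle directly, so the local models involving $E\Gamma_q$ are unnecessary.

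Second, the step giving $l$-connectivity of $BD(F)\to B\OO$ is not homotopy excision in the Blakers--Massey sense. It is the elementary fact that in a homotopy pushout, the map opposite an $l$-connected map is again $l$-connected. Your ``more directly'' justification is the right one.

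Third, $BE\to BF$ is at least $k$-connected, not exactly $k$-connected; the lower bound is all you use.
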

\begin{proof}
    By picking a suitable Riemannian metric, it is easy to see, that $$\Fr(\OO) = D^{k+1}(\Fr(\OO)\vert_F) \, \cup D^{l+1}(\Fr(\OO)\vert_S).$$
    Hence $\Fr(\OO)$ is as a manifold diffeomorphic to a double disc bundle for which the claim is known (see for example \cite[(1.4)]{gwz}). The result follows from the long exact sequence of homotopy groups for the fiber bundles and the five lemma.
\end{proof}
\begin{cor}
    If $\dim F \ge2$, then $F$ is simply connected.
\end{cor}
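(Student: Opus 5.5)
The plan is to apply Lemma \ref{fixpoint:lem:doubledisc} to the decomposition $\OO = D(F)\cup_E D(G\cdot p)$ from Theorem \ref{thm:strucure}, and then pass from orbifold to ordinary fundamental groups. Here $F$ has codimension $k+1$ and $G\cdot p$ has codimension $l+1$; by part 1 of the lemma, the inclusion $F\hookrightarrow \OO$ is orbi-$l$-connected. So as soon as $l\ge 1$, the map $\pi_1^\orb(F)\to\pi_1^\orb(\OO)=0$ is injective and $\pi_1^\orb(F)=0$. Since $\pi_1^\orb(F)\to\pi_1(\abs{F})$ is surjective (Section \ref{sec:prelim}), this gives $\pi_1(\abs F)=0$. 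The step needing care is that $F$ here means the suborbifold $F$, or its effective version $F_\eff$, rather than the restricted frame bundle. I would apply the lemma with $F_\eff$, using that $D(F)\to F_\eff$ is an orbibundle with fiber $\bb D^{k+1}/\Gamma$ as in the Uniqueness Lemma. Alternatively, one can observe that $\abs{F}$ is in any case the base of $\abs{D(F)}$, so simple connectivity of $\abs F$ is what matters.

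The main obstacle is therefore to show $l\ge 1$, i.e.\ that the soul orbit $G\cdot p$ has codimension at least $2$, and this is where the hypothesis $\dim F\ge 2$ enters. By Theorem \ref{thm:strucure}(2), $F_\eff\cong(\V^1_p(G\cdot p)/G_p)_\eff$ with $\V^1_p(G\cdot p)=\bb S^l/\Gamma_p$. Hence $\dim F\le l$, since the quotient of an $l$-dimensional space by $G_p$ has dimension at most $l$. Then $\dim F\ge 2$ forces $l\ge 2\ge 1$, and the argument of the first paragraph goes through. In fact we get orbi-$2$-connectedness, which is more than is needed.

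Finally, I would double-check the case distinction: if $l=0$, then $\dim F=0$, which is excluded by the hypothesis (this is also why the statement fails for $\dim F\le 1$, e.g.\ in the cohomogeneity-one examples). A remaining subtlety is that orbi-connectedness of $F\hookrightarrow\OO$ is stated via frame bundles, and $\Fr(\OO)|_F$ may differ from $\Fr(F)$. I would handle this by the fibration $\O(n)\to \Fr(\OO)|_F\to B$ and the comparison with $BF$, which is the five-lemma argument already used in the proof of Lemma \ref{fixpoint:lem:doubledisc}.
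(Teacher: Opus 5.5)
Your argument is correct and is the paper's proof: $F\hookrightarrow\OO$ is orbi-$l$-connected by Lemma~\ref{fixpoint:lem:doubledisc}, and $\dim F\le l$ together with $\dim F\ge 2$ gives $l\ge 2$. One correction: an $l$-connected map is injective on $\pi_1$ only when $l\ge 2$, not already for $l\ge 1$ (for $l=1$ you only get surjectivity, as the fixed circle of $\SO(k+1)$ acting on $\bb S^{k+2}$ shows), so orbi-$2$-connectedness is exactly what is needed, not more.
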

\begin{proof}
    By lemma \ref{fixpoint:lem:doubledisc}, the inclusion of $F$ is $l$-connected. Since $\dim F = \dim (\bb S^l/K_p) \le l$ the result follows.
\end{proof}

\begin{cor}\label{fixpoint:lem:connectedgroup}
\mbox{If $k\ge 2$, then $K_p$ is connected.}
\end{cor}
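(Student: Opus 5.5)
The plan is to read off $\pi_0(K_p)$ from the orbifold fundamental group of the soul orbit $S=G\cdot p$, and then use Lemma \ref{fixpoint:lem:doubledisc} to transfer the standing assumption that $\OO$ is simply connected (in the orbifold sense, $\pi_1^\orb(\OO)=0$) to $S$. Recall the notation: $K_p$ is the lift $1\to\Gamma_p\to K_p\to G_p\to 1$ acting on the slice $\bb D^{l+1}$ at a lift $\hat p$ of $p$, and $k+1=\codim F$.

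\textbf{Step 1: a global model for the tube.} By the slice theorem for orbifolds, the tube $D(G\cdot p)$ is the orbifold quotient
\[
D(G\cdot p)\cong (G\times \bb D^{l+1})/K_p .
\]
Here $K_p$ acts on $G$ from the right through the projection $K_p\to G_p\subseteq G$, and on $\bb D^{l+1}$ through the slice representation. This action is almost free, because the kernel $\Gamma_p$ of $K_p\to G_p$ is finite. So the quotient is an action groupoid in the sense of Example \ref{exa:actiongroupoid}, and it is Morita equivalent to the groupoid coming from the atlas. I would make this equivalence explicit by passing to the étale subgroupoid on the slices, as in the proof that almost free actions give orbifolds.

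\textbf{Step 2: $\pi_1^\orb(S)$ surjects onto $\pi_0(K_p)$.} Since $D(G\cdot p)$ deformation retracts equivariantly onto $S$, the Borel construction gives
\[
BS\simeq B D(G\cdot p)\simeq EK_p\times_{K_p}(G\times \bb D^{l+1})\simeq EK_p\times_{K_p}G .
\]
This space is the total space of a fibration $G\to EK_p\times_{K_p}G\to BK_p$. Its long exact homotopy sequence contains
\[
\pi_1(G)\to\pi_1^\orb(S)\to\pi_1(BK_p)=\pi_0(K_p)\to\pi_0(G).
\]
Because $G$ is connected, $\pi_0(G)=0$, so $\pi_1^\orb(S)\to\pi_0(K_p)$ is surjective.

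\textbf{Step 3: $\pi_1^\orb(S)=0$ when $k\ge 2$.} By Lemma \ref{fixpoint:lem:doubledisc}(2), the inclusion $S\hookrightarrow\OO$ is orbi-$k$-connected. For $k\ge 2$ this gives an isomorphism $\pi_1^\orb(S)\cong\pi_1^\orb(\OO)=0$. Combining with Step 2, $\pi_0(K_p)=0$, so $K_p$ is connected. The hypothesis $k\ge2$ is exactly what is needed: orbi-$1$-connectedness would only make $\pi_1^\orb(S)\to\pi_1^\orb(\OO)$ surjective, which does not force $\pi_1^\orb(S)$ to vanish.

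\textbf{Main obstacle.} The delicate step is Step 1, specifically justifying that the classifying space of the tube, defined via the frame bundle, agrees with the Borel construction of the $K_p$-action on $G\times\bb D^{l+1}$. The frame bundle of the tube is $(G\times\Fr)/K_p$-type data, and one has to check that the orbifold structure on the tube is the one coming from the lift $K_p$ and not from a smaller quotient. If the direct identification is awkward, I would instead argue with covering spaces. A nontrivial component of $K_p$ yields the nontrivial orbifold cover
\[
(G\times\bb D^{l+1})/(K_p)_0\to D(G\cdot p),
\]
with deck group $\pi_0(K_p)$. Connectedness of $G$ ensures this cover is connected, and it then contradicts $\pi_1^\orb(S)=0$.
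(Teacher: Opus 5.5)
Your proposal is correct and follows the paper's argument. You identify $B(G\cdot p)$ with the Borel construction $EK_p\times_{K_p}G$, extract a surjection $\pi_1^\orb(G\cdot p)\to\pi_0(K_p)$ from a long exact sequence (the paper uses the principal bundle $K_p\to EK_p\times G\to EK_p\times_{K_p}G$ instead of your fibration over $BK_p$, but the relevant exact segment is the same), and then use the orbi-$k$-connectedness of Lemma \ref{fixpoint:lem:doubledisc} with $k\ge 2$ to get $\pi_1^\orb(G\cdot p)\cong\pi_1^\orb(\OO)=0$.
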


\begin{proof} 
 By the long exact sequence of homotopy groups for the bundle
    $$K_p \to EK_p \times G \to B(G/K_p)$$
    We have that $\pi_0(K_p) = 0$, if $\pi_1^\orb(G/K_p) = 0$. Since $\OO$ is orbifold simply connected, we have to show, that $G/K_p \hookrightarrow \OO$ is at least $2$ connected. This follows from Lemma \ref{fixpoint:lem:doubledisc}.
\end{proof}
\begin{lem}\label{fixpoint:lem:Lconnected}
    If $L$ is connected and $k \ge 2$, then $\Gamma_q =1$.
\end{lem}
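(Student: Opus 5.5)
The idea is to realise $\Gamma_q$ as a quotient of a fundamental group that is forced to vanish, using the principal orbit $G/L$ of the $G$-action.

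\textbf{Setup near $F$.} Take an intrinsically regular point $q\in F$. Its normal directions form $\V^1_q F\cong \bb S^l/\Gamma_q$. This is a single $G$-orbit, hence a manifold, and it is identified with the principal orbit $G/L$. As noted before the lemma, the lifted group $K_q$ acts transitively on $\bb S^l$ with isotropy $H_q$, and $H_q$ is a $\Gamma_q$-extension of $L$:
$$1\to \Gamma_q\to H_q\to L\to 1.$$
This gives $\bb S^l=K_q/H_q$ and $\bb S^l/\Gamma_q = G/L$.

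\textbf{Step 1: $G/L$ is simply connected.} By Lemma \ref{fixpoint:lem:doubledisc}, the inclusion $E\hookrightarrow\OO$ of the common boundary is orbi-$\min(k,l)$-connected. On $E$ all orbits are almost principal and lie in $\OO_\reg$ generically, and near $q$ the boundary $E$ fibres over $F$ with fibre $G/L$. I would use $k\ge 2$ together with $\pi_1^\orb(\OO)=0$ to conclude $\pi_1^\orb(E)=0$. For $l\ge 2$ this needs the same connectivity in $l$; if $l=1$ the normal sphere is a circle and the statement has to be handled directly.

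I would then pass from $E$ to the fibre using the fibration $G/L\to E\to F$ over the regular part, or equivalently the fibration $G/L \to E\to E/G$ with $E/G\cong |F_\eff|$. The corollary above gives that $F$ is simply connected when $\dim F\ge 2$. Combined with $\pi_2$-information on $F$, this should give $\pi_1(G/L)=0$. Alternatively, one can use that $G$ is connected: $\pi_1(G/L)$ surjects onto $\pi_0(L)=0$, and the long exact sequence shows it is a quotient of $\pi_1(G)$.

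\textbf{Step 2: the covering argument.} The map $\bb S^l\to \bb S^l/\Gamma_q=G/L$ is a covering with deck group $\Gamma_q$, since $\Gamma_q$ acts freely on $\bb S^l$. So it suffices to show $\pi_1(G/L)=0$, because then $\Gamma_q=1$.

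There is also a direct group-theoretic route. $K_q$ is connected, being $\Gamma_q\cdot K$, and by Corollary \ref{fixpoint:lem:connectedgroup} it is connected once $k\ge 2$. For $l\ge 2$, connectedness of $K_q$ and simple connectivity of $\bb S^l$ force $H_q$ to be connected. But $H_q\to L$ has kernel $\Gamma_q$ and $L$ is connected, so $H_q$ is a finite extension of the connected group $L$. If $H_q$ is also connected, the discrete normal subgroup $\Gamma_q$ is central, and $\bb S^l=K_q/H_q\to K_q/(H_q)$ would be trivial. Careful: the fibre of $\bb S^l\to G/L$ is exactly $H_q/(H_q)_0\cdot$(stuff), and this is what shows $\Gamma_q$ is detected by $\pi_0(H_q)$. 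So the cleanest formulation is:
$$\Gamma_q\cong \pi_1(G/L)\big/\mathrm{im}\,\pi_1(\bb S^l)\cong \pi_0(H_q)\text{-type quotient},$$
which vanishes when $H_q$ is connected. That in turn follows from the long exact sequence of $H_q\to K_q\to \bb S^l$ with $l\ge 2$ and $K_q$ connected.

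\textbf{Main obstacle.} I expect the main difficulty to be checking that $K_q$ really is connected. The cited corollary concerns $K_p$ at the soul orbit, not $K_q$ at $F$. I would try to transfer it via the identification $G_p/H\cong$ principal orbit and $k\ge 2$, or else redo that corollary's argument at $q$ using $F\hookrightarrow\OO$ being orbi-$l$-connected.

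Separately, the low-dimensional case $l=1$ must be treated by hand. There $G$ is a circle acting on $\bb S^1/\Gamma_q$, $L$ is trivial, and $\Gamma_q$ is cyclic. One must show directly, using $k\ge2$ and simple connectivity, that $\Gamma_q$ is trivial.
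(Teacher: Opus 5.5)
There is a genuine gap. No step of your argument can turn $\pi_1^{\orb}(\OO)=0$ into $\pi_1(G/L)=0$. With the hypotheses you actually use ($\OO$ simply connected, effective and positively curved, $G$ connected and effective, $k,l\ge 2$, $L$ connected) the conclusion is false. The sequence of $G/L\to E\to F$ only gives $\pi_2(F)\xrightarrow{\partial}\pi_1(G/L)\to\pi_1^{\orb}(E)=0$. So $\Gamma_q\cong\pi_1(G/L)$ is the image of $\partial$, and no ``$\pi_2$-information'' on $F$ kills it. Example: let $\SO(3)=\SU(2)/\{\pm I\}$ act on $\cp^3[1,1,2,2]$ through the first two homogeneous coordinates. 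The element $-I$ is absorbed by $z=-1$ in the weighted circle, so the action is effective. Then $F=\{[0:0:a_2:a_3]\}$ has codimension $4$, and its generic local group $\Gamma_q=\bb Z_2$ acts by $-1$ on the normal $\bb C^2$. The normal sphere is therefore $\bb{RP}^3$, on which $\SO(3)$ acts simply transitively, so $k=3$ and $L=1$. The soul orbit $\{[a_0:a_1:0:0]\}$ has trivial local group and $l=3$, so $E$ is simply connected, and $\partial\colon\pi_2(\cp^1)\to\pi_1(\bb{RP}^3)=\bb Z_2$ is onto. Your separate case $l=1$ is also a conflation: the sphere normal to $F$ has dimension $k\ge 2$, while the $l$ of Lemma~\ref{fixpoint:lem:doubledisc} refers to the soul orbit.

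The group-theoretic variant rests on a misidentification. Since $\Gamma_q$ acts freely on $\bb S^k$, we have $\Gamma_q\cap H_q=1$ and hence $H_q\cong L$. The $\Gamma_q$-extension of $L$ is the preimage $\Gamma_qH_q$. This equals $\Gamma_q\times H_q$, because the connected group $H_q$ centralises the finite normal subgroup $\Gamma_q$, and its $\pi_0$ is $\Gamma_q$ itself. So connectedness of $H_q$ is just the hypothesis, and connectedness of $K_q$ does not help either: in the example $K_q=\SU(2)$ and $H_q=1$.

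The missing ingredient is $\pi_1(G)=0$. In Cases~3 and~4, where the paper invokes the lemma, $G$ has been replaced by $\Sp(k+1)$ resp.\ $\Spin(9)$ acting almost effectively, and $L$ is the principal isotropy of that lifted action. For the lifted $\SU(2)$-action in the example, $L=\{\pm I\}$ is disconnected, as it should be. Under $\pi_1(G)=0$ your own side remark closes the proof. The sequence $\pi_1(G)\to\pi_1(G/L)\to\pi_0(L)$ gives $\pi_1(G/L)=0$. Since $\bb S^k\to G/L=\bb S^k/\Gamma_q$ is a $\Gamma_q$-covering with $k\ge 2$, it follows that $\Gamma_q=1$. (In Lemma~\ref{fixpoint:lem:Kp=Hp} one instead has $G/L=G\cdot p$, which is simply connected by Lemma~\ref{fixpoint:lem:doubledisc}; there your global idea does work.) The paper's one-line proof reads $\Gamma_q=1$ off the classification of homogeneous space forms $G/L\cong\bb S^k/\Gamma_q$. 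That needs the same restriction on $G$, since $\SO(3)$ acting simply transitively on $\bb{RP}^3$ is such a space form with connected isotropy.
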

\begin{proof}
     We have $G/L \cong \bb S^k/\Gamma_q$. Hence $G/L$ is a homogeneous space form and the claim follows from the classification.
\end{proof}
\begin{lem}\label{fixpoint:lem:p}
The action $K_p \acton \bb S^l$ is either transitive or the orbits are of dimension $0$, $1$ or $3$.

\end{lem}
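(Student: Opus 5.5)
The plan is to reduce the statement to an almost free action on a round sphere, where a rank bound leaves only the group dimensions $0$, $1$ and $3$.

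Here $K_p$ is the lift $1\to\Gamma_p\to K_p\to G_p\to 1$ acting linearly on the normal sphere $\bb S^l$ at the soul orbit $G\cdot p$, so that $\V^1_p(G\cdot p)=\bb S^l/\Gamma_p$. By Theorem~\ref{thm:strucure}(2), every $G_p$-orbit on $\bb S^l/\Gamma_p$ is almost principal. Lifting to $\bb S^l$, every $K_p$-orbit therefore has the same dimension $d$ as a principal orbit, and all isotropy groups have identity component conjugate to $H^0$, where $H$ is the principal isotropy of $K_p$. Since $\Gamma_p$ is finite, it suffices to work with the identity component $K=(K_p)_0$. If $d=l$, the orbits are open and closed in the connected sphere, so the action is transitive. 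If $d=0$, there is nothing to prove. So assume $0<d<l$.

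First I reduce to an almost free action. Set $\Sigma=(\bb S^l)^{H^0}$. Because $K$ acts linearly, $\Sigma$ is a great subsphere. Every point of $\bb S^l$ has isotropy containing a conjugate of $H^0$, so $\Sigma$ meets every orbit. The group $N:=N_K(H^0)/H^0$ acts on $\Sigma$, and for $x\in\Sigma$ the stabilizer $K_x$ has identity component exactly $H^0$. Hence $N$ acts on $\Sigma$ with finite isotropy. Moreover, $N\cdot x=K\cdot x\cap\Sigma$ up to finitely many components, so the $N$-orbits also have dimension $d$. I would verify this by a standard slice argument, using that the tangent space of $K\cdot x$ decomposes compatibly with the $H^0$-fixed directions. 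Note that $\Sigma$ is a sphere of positive dimension, because it contains $d$-dimensional orbits.

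Next I apply the rank bound. A torus $\T^2$ cannot act almost freely on a sphere, since the rational cohomology of $\Sigma$ is that of a sphere (Borel localization; equivalently, a Smith-type argument that $\chi$ or the Gysin sequence of $\Sigma\to\Sigma/\T^2$ is inconsistent). Therefore the identity component $N_0$ has rank at most $1$. Together with $\dim N_0=d>0$, this forces $N_0\cong \S^1$, $\SU(2)$ or $\SO(3)$, so $d=\dim N_0\in\{1,3\}$. Combined with the cases $d=0$ and $d=l$ above, this is exactly the claim.

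The main obstacle is the reduction in the second paragraph. One must check that $N$ acts almost freely on $\Sigma$ with orbits of the full dimension $d$, i.e.\ that $\dim N=d$, and not merely that $\dim N\cdot x\le d$. If this turns out to be delicate, I would instead appeal to the structure theory of Riemannian foliations of round spheres. The $K$-orbits form a regular Riemannian foliation of $\bb S^l$ with compact leaves, and by Gromoll--Grove and Wilking such a foliation is congruent to a Hopf fibration, with leaf dimension $1$, $3$ or $7$. The $7$-dimensional case $\bb S^{15}\to\bb S^8$ is then excluded because its fibres are not orbits of an isometric group action.
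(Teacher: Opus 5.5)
Your main route has a genuine gap at exactly the step you flagged. The identity $K\cdot x\cap\Sigma=N_K(H^0)\cdot x$ is true. It even holds exactly, since $H^0\subseteq gK_xg^{-1}$ forces $gH^0g^{-1}=H^0$. But it gives no control on dimension. $K\cdot x\cap\Sigma$ is the $H^0$-fixed set of $K\cdot x\cong K/K_x$, and its tangent space at $x$ is $(\mathfrak k/\mathfrak h)^{H^0}$, the Lie algebra of $N_K(H^0)/H^0$. Hence $\dim N=d$ holds if and only if $H^0$ acts trivially on $\mathfrak k/\mathfrak h$, i.e.\ if and only if $H^0$ is normal in $K$. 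No slice argument yields this.

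What the slice theorem does give is that $H^0$ acts trivially on the normal space $\nu_x$ of the orbit. So $\dim\Sigma=\dim N+(l-d)$ and $N$ acts almost freely on $\Sigma$. The rank bound then only gives $\dim N\in\{0,1,3\}$, which does not constrain $d$. Concretely, take a hypothetical action of $K=\SU(3)$ whose isotropy groups all have identity component conjugate to $\SU(2)$. Then $N\cong\U(2)/\SU(2)\cong\S^1$, which is compatible with your rank bound, yet $d=5$; this is precisely the kind of case the lemma must exclude. For $d=1$ normality is automatic, since a connected subgroup of codimension one is normal, but you cannot know $d=1$ in advance. Proving that $H^0$ must be normal is essentially as hard as the lemma itself.

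Your fallback is the paper's proof. The paper argues as follows:
\begin{enumerate}
\item The orbits form a regular Riemannian foliation of the round sphere.
\item Lytchak--Wilking's theorem on Riemannian foliations of spheres (their Corollary~1.2) gives leaf dimension $1$, $3$ or $7$, with $7$ only for the Hopf fibration $\bb S^{15}\to\bb S^8$.
\item Gromoll--Grove show that this fibration is not induced by a group action.
\end{enumerate}
However, you state the input incorrectly. Such foliations are in general \emph{not} congruent to Hopf fibrations: the weighted $\S^1$- and $\Sp(1)$-actions defining $\cp(W)$ and $\hp(V)$ in this very paper give Riemannian foliations of round spheres with singular (orbifold) leaf spaces. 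Congruence to a Hopf fibration is the rigidity statement for Riemannian submersions of round spheres (Gromoll--Grove, Wilking), which does not apply here. What you need is only the restriction on the leaf dimension, and that is the Lytchak--Wilking result. With that citation your fallback is correct and coincides with the paper's argument.
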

\begin{proof}
        Since $\bb S^l/K_p$ is an orbifold, we have by \cite[Corollary~1.2]{WilkingLytchak}, that the action is transitive or the Riemannian foliation induced by the group action has leaves of dimension $0$, $1$, $3$ or $7$. If the leaves are of dimension $7$, then again by \cite[Corollary 1.2]{WilkingLytchak}, the foliation is given by the Hopf fibration $\bb S^{15} \to \bb S^8$, but this cannot be induced by a group action \cite[p. 236]{Gromoll-Grove}.
\end{proof}
\begin{lem}\label{fixpoint:lem:transsphere}
    If $p$ is a fixed point $l \ge 2$ and  $K_p \acton \bb S^l$ transitively, then $\OO \cong \bb S^n$ with a linear cohomogeneity one action.
\end{lem}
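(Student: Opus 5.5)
The plan is to show that under the hypotheses the soul orbit $G\cdot p$ is itself a point, so that $\OO$ is a cohomogeneity one orbifold with two fixed points whose structure is forced to be that of a round sphere. Since $p$ is a fixed point of $G$, the lift $1\to\Gamma_p\to K_p\to G\to 1$ acts linearly on the normal slice $\bb D^{l+1}$ at $p$, and $\bb D^{l+1}/\Gamma_p$ is a neighbourhood of $p$ in $\OO$. If $K_p$ acts transitively on $\bb S^l$, then $G$ acts transitively on $\V^1_p=\bb S^l/\Gamma_p$. Hence $p$ is a fixed point at which the action is fixed point homogeneous, and $\{p\}$ is an isolated fixed point component. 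By Theorem~\ref{thm:strucure}(3), $\OO=D(F)\cup_E D(p)$ with $E=\bb S^l/\Gamma_p$ a single $G$-orbit. Consequently every orbit in between is principal and diffeomorphic to $E$. In particular $E$ is also the normal sphere bundle of $F$, which has dimension $\dim F+k$. Since $E$ is a single orbit, $F$ must be a point, and $k=l$. So $\OO=\bb D^{l+1}/\Gamma_q\cup_{\bb S^l/\Gamma}\bb D^{l+1}/\Gamma_p$, an orbifold of cohomogeneity one with two fixed points.

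The second step is to show that both local groups are trivial. Here I use $\pi_1^\orb(\OO)=0$ and Lemma~\ref{fixpoint:lem:doubledisc} with $k=l\ge 2$: the inclusion $E\hookrightarrow\OO$ is orbi-$\min(k,l)$-connected, hence at least $2$-connected. Therefore $\pi_1(\bb S^l/\Gamma_p)\cong\Gamma_p$ (using $l\ge2$) maps isomorphically onto $\pi_1^\orb(\OO)=0$, which gives $\Gamma_p=1$. By symmetry, or because $\Gamma_q\cong\pi_1(E)$ as in the proof of the Uniqueness Lemma, also $\Gamma_q=1$. Equivalently, $E\cong\bb S^l$ is simply connected, and the orbifold charts at the two poles are honest discs. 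Thus $\OO$ is a manifold, obtained by gluing two $(l+1)$-discs along a common boundary sphere on which $G$ acts transitively via $K_p=K_q$. As a cross-check, an alternative route to $\Gamma_p=1$ is Corollary~\ref{fixpoint:lem:connectedgroup}: $K_p$ is connected, so $\Gamma_p$ lies in the identity component. One can then compare with the list of transitive sphere actions in Table~\ref{tab:fixhom:homospheres}.

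Finally, I identify the action. Both halves are $G$-equivariantly the linear disc $\bb D^{l+1}$ with the transitive linear $K_p$-action on the boundary, where $K_p\to G$ is now an isomorphism. The standard round $\bb S^{l+1}\subset\R^{l+1}\oplus\R$, with $G$ acting linearly on the first factor, has exactly the same double disc structure. By the Uniqueness Lemma~\ref{lem:unique}, case (1) since $\codim F=l+1\ge3$, or by the manifold argument of Grove--Searle with Sublemma~2.6 of \cite{Grove1997-xz} for the radial extension, $\OO$ is $G$-equivariantly diffeomorphic to $\bb S^{l+1}$ with this linear cohomogeneity one action.

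The main obstacle is the first step, ruling out that the soul orbit is positive dimensional while $p$ is fixed. The subtle point is that the hypothesis "$p$ is a fixed point" must be reconciled with $p$ lying on the soul orbit. I would handle it by the dimension count above: $E$ is simultaneously a single orbit and a bundle over $F$. A secondary worry is the case $\Gamma_p\ne1$ acting freely, with $E$ a homogeneous space form. This is exactly what the $2$-connectedness from Lemma~\ref{fixpoint:lem:doubledisc} excludes, and it is why the hypothesis $l\ge2$ is needed.
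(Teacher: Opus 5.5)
Your proposal is correct and follows the paper's argument. You force $F$ to be a point, so that $\dim\OO=l+1$; you then use Lemma~\ref{fixpoint:lem:doubledisc} with $\pi_1^\orb(\OO)=0$ to kill $\Gamma_p$ and $\Gamma_q$; and finally you glue the two honest discs into a sphere with a linear action. The differences are cosmetic: you apply the connectivity statement to $E=\bb S^l/\Gamma_p$ where the paper applies it to the point suborbifolds $\ast/\Gamma_p$ and $\ast/\Gamma_q$, and the ``main obstacle'' you flag is vacuous, since a fixed point $p$ already has $G\cdot p=\{p\}$.
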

\begin{proof}
    From the assumptions we have $\dim\OO = l+1$ and $F$ is also a fixed point. By Lemma \ref{fixpoint:lem:doubledisc} both $\Gamma_p$ and $\Gamma_q$ are trivial, since the suborbifolds $\ast_{\Gamma_p}$ and $\ast_{\Gamma_q}$ must be orbifold simply connected. Therefore $\OO = \bb D^{l+1}\cup \bb D^{l+1}$ is a sphere with a linear action.
    \end{proof}
\begin{rem}
    By Lemma \ref{fixpoint:lem:p}, the assumptions of Lemma \ref{fixpoint:lem:transsphere} are fulfilled, if $p$ is a fixed point and the orbit dimension of the $K_p$-action is larger than $3$.
\end{rem}
\begin{lem}\label{fixpoint:lem:Hpinj}
    The restriction $H_p \to L$ is always injective. 
\end{lem}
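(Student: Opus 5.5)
The plan is to identify the kernel of the restriction $H_p \to L$ with a local group of $\OO$ at a regular point, which is then trivial by the Structure Theorem. I use the setup from the discussion after the Uniqueness Lemma. At the soul orbit point $p$ there is a local lift
$$1 \to \Gamma_p \to K_p \to G_p \to 1,$$
and $K_p$ acts linearly on the normal disk $\bb D^{l+1}$ of a local model at $p$, so that $\V_p^{\le 1}(G\cdot p) \cong \bb D^{l+1}/\Gamma_p$. Here $H_p \subseteq K_p$ denotes the principal isotropy group of the $K_p$-action on the unit sphere $\bb S^l$. The group $L \subseteq G$ is the principal isotropy group of the $G$-action on $\OO$, and the map in question is the restriction of the projection $K_p \to G_p \subseteq G$.

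First I check that this map really lands in $L$. Pick $v \in \bb S^l$ on a principal $K_p$-orbit, so that $(K_p)_v = H_p$. For small $t>0$, the point $x = [\exp(tv)]$ lies in $\OO \setminus (F \cup G\cdot p)$. By the slice theorem for orbifolds (lifting to the frame bundle), the $G$-isotropy $G_x$ is the image in $G_p$ of those $k \in K_p$ with $kv \in \Gamma_p v$. In particular it contains the image of $(K_p)_v = H_p$.

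Since the $K_p$-orbit of $v$ is principal, the corresponding $G$-orbit is principal, so $G_x$ is conjugate to $L$. After choosing $v$ appropriately, or conjugating $L$, the image of $H_p$ lies in $L$.

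The injectivity claim then amounts to the kernel $H_p \cap \Gamma_p = (\Gamma_p)_v$ being trivial. The group $(\Gamma_p)_v$ is exactly the local group $\Gamma_x$ of $\OO$ at $x$, computed in the chart $(\hat U, \Gamma_p)$ as the stabilizer of the lift $tv$. By part 2 (equivalently part 4) of Theorem \ref{thm:strucure}, principal orbits are contained in $\OO_\reg$, so $\Gamma_x = 1$. Hence $H_p \cap \Gamma_p = 1$, and $H_p \to L$ is injective.

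The main point needing care is the matching of principal orbits: a principal $K_p$-orbit in $\bb S^l$ must correspond to a principal $G$-orbit in $\OO$, and not merely to an almost principal one. I would argue this by noting that principal orbits of $K_p$ on $\bb S^l$ are open and dense. Their images in the slice $\bb D^{l+1}/\Gamma_p$ therefore meet the open dense set of principal $G$-orbits, which lies in $\OO_\reg$. On that set, the isotropies of $K_p$ at lifts are constant up to conjugacy. So one may choose $v$ principal for $K_p$ with $x$ both principal and regular, and the argument above applies.
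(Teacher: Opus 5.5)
Your argument is correct and is essentially the paper's: both rest on effectiveness of $\OO$ forcing $\Gamma_p$ to act on the normal sphere $\bb S^l$ with trivial stabilizers at generic points, so that $H_p\cap\Gamma_p=(\Gamma_p)_v=1$ for a suitable principal $v$ (the paper phrases this as ``otherwise $\Gamma_p\acton\bb S^l$ would have a kernel''). Your choice of $v$ in an intersection of open dense sets, combined with normality of $\Gamma_p$ in $K_p$ to pass to an arbitrary conjugate of $H_p$, spells out a step the paper leaves implicit; for this step, the density of $\OO_\reg$ from the preliminaries is enough, and parts 2 and 4 of the Structure Theorem are not needed.
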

\begin{proof}
    Since $H_p$ is the principal isotropy group of $K_p \acton \bb S^l$, it cannot contain any non trivial element of $\Gamma_p$.  Otherwise, $\Gamma_p \acton \bb S^l$ has a kernel, which violates the effectiveness of $\OO$. Therefore $H_p \to L$ is injective. 
\end{proof}
\begin{lem}\label{fixpoint:lem:Kp=Hp}
    If $K_p = H_p$ is connected and $k \ge 2$, then $K = G$ and $\OO$ is diffeomorphic to a sphere $\bb S^{k+l+2}$ with a linear action given by:
    $$g \ast(a_0,\ldots,a_{k+l+1}) = (g\ast(a_0, \ldots,a_{k}),a_{k+1},\ldots,a_{k+l+1})$$
\end{lem}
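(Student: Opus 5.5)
Throughout, $K_p$ denotes the lift $1\to\Gamma_p\to K_p\to G_p\to 1$ acting on the normal sphere $\bb S^l$ of the soul orbit $G\cdot p$, and $H_p$ its principal isotropy group. The hypothesis $K_p=H_p$ means $K_p$ acts trivially on $\bb S^l$.

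\emph{Step 1: $\Gamma_p=1$ and $G_p=H_p$.} Since $\Gamma_p\subset K_p=H_p$, it acts trivially on $\bb S^l$. Effectivity of $\OO$ (as in Lemma~\ref{fixpoint:lem:Hpinj}) then forces $\Gamma_p=1$, so $K_p\cong G_p$. Lemma~\ref{fixpoint:lem:Hpinj} further identifies $G_p=H_p$ with a subgroup of the principal isotropy $L$ of the $G$-action. Since principal isotropy is minimal, $G_p=L$ up to conjugacy, and in particular $L$ is connected.

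\emph{Step 2: the $F$-side is a manifold.} Because $L$ is connected and $k\ge 2$, Lemma~\ref{fixpoint:lem:Lconnected} gives $\Gamma_q=1$. So near intrinsically regular points of $F$ the normal disc is an honest $\bb D^{k+1}$. The group $K$ (the unit component of $K_q$, here equal to $G$ acting effectively up to a finite kernel) acts transitively on $\bb S^k$ with isotropy $L$. This is the claim $K=G$, read off from the extension $1\to\Gamma_q\to K_q\to G\to 1$ with trivial $\Gamma_q$.

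\emph{Step 3: structure of $G\cdot p$, $F$ and $\OO$.} On the other side, $G\cdot p=G/L\cong \bb S^k$, and $G_p$ acts trivially on the slice $\bb D^{l+1}$. Hence
\[
D(G\cdot p)\cong G\times_{G_p}\bb D^{l+1}=\bb S^k\times \bb D^{l+1}
\]
is a trivial bundle. Every orbit in $D(G\cdot p)$ is then principal, so by Theorem~\ref{thm:strucure}(2) we get $F\cong \bb S^l/G_p=\bb S^l$. In particular $F$ is a manifold, and all local groups are trivial along $F$ as well as near $G\cdot p$. We conclude that $\OO$ is a manifold, namely the double disc bundle $\bb D^{k+1}\times\bb S^l\cup \bb S^k\times\bb D^{l+1}$ glued $G$-equivariantly along $\bb S^k\times\bb S^l$.

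\emph{Step 4: comparison with the linear model.} Take the model $\bb S^{k+l+1}\subset\R^{k+1}\oplus\R^{l+1}$, where $G$ acts on the first factor through its transitive action on $\bb S^k$. Its fixed point set is $\bb S^l$ of codimension $k+1\ge 3$, and the normal bundle of the soul orbit $\bb S^k$ is $G\times_L\bb D^{l+1}$ with $L$ acting trivially. Since this bundle is isomorphic to ours, Lemma~\ref{lem:unique}(1) gives the $G$-equivariant diffeomorphism. (The statement writes $\bb S^{k+l+2}$; the dimension should be checked against the normalisation of $k,l$ used, since the model has dimension $k+l+1$ under my conventions.)

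The main obstacle is Step 1, namely ensuring that $G_p$ really coincides with the principal isotropy $L$ and not merely with a subgroup of it. I would handle this by comparing $E=G/H$ with $G\times_{G_p}\bb S^l$: the isotropy at a point of $E$ near $G\cdot p$ is exactly the stabiliser in $K_p$ of a normal direction, which is all of $G_p$. This yields $L=G_p$.
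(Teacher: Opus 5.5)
Your proposal is correct and follows the paper's route: effectivity gives $\Gamma_p=1$, hence $H_p=G_p=L$ is connected, Lemma~\ref{fixpoint:lem:Lconnected} then gives $\Gamma_q=1$, and the resulting double disc bundle (with trivial bundle $D(G\cdot p)\cong G/L\times\bb D^{l+1}$) is identified with the linear suspension model via the Uniqueness Lemma~\ref{lem:unique}. Your spelled-out justifications of $G_p=L$ and of $\Gamma_p=1$ add detail the paper omits, and your dimension remark is right: the model is $\bb S^{k+l+1}$, which matches the $k+l+2$ coordinates in the displayed action, so the ``$\bb S^{k+l+2}$'' in the statement is an off-by-one.
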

\begin{proof}
    Since $K_p = H_p$, we have $\Gamma_p = 1$ and therefore $H_p = L$. Since $L = H_p$ is connected, we also have $\Gamma_q =1$. This fully recovers the double disc bundle structure of the given action. Hence they must coincide by the Uniqueness Lemma. 
\end{proof}

By Lemma \ref{fixpoint:lem:connectedgroup} the group $K_p$ is connected, in all cases, but $K = \S^1$. We will show later, that this is also true in the latter case, given, that $G$ acts with trivial kernel.\\
Case $0$: $K = \S^1$. In this case $k = 1$ and $\codim F=2$. Let $\Gamma_p$ be the local group at $G\cdot p$. Then there is an extension $1 \to \Gamma_p \to K_p \to G_p\to 1$, such that $D(G\cdot p) \cong \bb D^{l+1} \times_{K_p} G$. First, let $G_p = \S^1$ and hence the dimension of $\OO$ is even. We first notice, that $K_q$ must be connected: $K_q$ is contained in $\O(2)$ with $K = \SO(2)$. Since $\Gamma_q$ is the local group of a intrinsically regular point, we have $\Gamma_q\subseteq\O(2)$. Because $\Gamma_q$ cannot contain any reflection, we have $\Gamma_q \subseteq \SO(2) = K$. Hence $K_q = K$ is connected and $\Gamma_q$ is cyclic. Now let $\dim \OO = 2$. Then by the same argument as before $K_p$ is connected and $\Gamma_p$ is cyclic. This gives the structure of a weighted $\bb C\bb P^1$. Hence let $\dim \OO \ge 4$. In this case $l \ge 3$ and hence $F$ is orbifold simply connected by Lemma \ref{fixpoint:lem:doubledisc}. We claim that $K_p$ is connected as well. 
 By Lemma \ref{fixpoint:lem:Hpinj}, $H_p \hookrightarrow L = \lbrace e\rbrace$. Hence $H_p$ is trivial and therefore $K_p \acton\bb S^{2l-1}$ effectively and we have $\bb S^{2l-1}/K_p \cong F_{\eff}$. Since $\pi_1^{\orb}(F_\eff) \cong \lbrace 1\rbrace$, also $K_p$, must be connected, by the long exact sequence of homotopy groups. Hence $K_p \cong \S^1$ and $\Gamma_p$ is cyclic. $\OO$ must be diffeomorphic to a weighted complex projective space. \\
From now on $K_p$ and hence $G_p$ must be connected by lemma \ref{fixpoint:lem:connectedgroup}. If $p$ is a fixed point and $\dim K_p/H_p \neq 3$ , then $\OO$ is equivariantly diffeomorphic to a sphere with a linear cohomogeneity one action by lemma \ref{fixpoint:lem:transsphere}. If $K_p = H_p$, then $\OO$ is equivariantly diffeomorphic to a sphere with a linear suspension action by lemma \ref{fixpoint:lem:Kp=Hp}. Hence we assume, that $p$ is no fixed point except if $ \dim K_p/H_p = \dim G/L = k = 3$ and that $K_p \neq H_p$.\\

\noindent
Case $2$: $K= \SU(k+1)$, and $H = \SU(k)$ for $k \ge 2$. Since $\codim F = 2k+2$, we need to compute the bundle $D(G\cdot p)$. Let $H_p$ be the principal isotropy of the $K_p$-action on $\bb S^l$. Since $\dim \SU(k+1)/\SU(k) = 2k+1 \ge 5$, we have $K_p/H_p \cong \S^1$. $G_p \cong \U(k) = \S^1 \times_{\bb Z_k}\SU(k)$ and $K_p$ is a connected covering of $G_p$ and hence $\Gamma_p =\bb Z_{p^\prime}$ is cyclic. Since $K_p \to G_p$ is a $p^\prime$-fold covering, we have $K_p = \S^1 \times_{\bb Z_k}\SU(k)$, but this time $\bb Z_k \to \bb Z_{\frac{k}{\gcd(k,p^\prime})}\subseteq\SU(k)$. Let $K_p/H_p$ acts with weights $q_0^\prime,\ldots,q_n^\prime$ and $\bb Z_g$ be the kernel of the action of the $\S^1$-factor in $K_p$. Then $\OO$ is equivariantly diffeomorphic to a weighted complex projective space $\C\bb P^{k+1+n+1}[p^\prime, \ldots, p^\prime, q_0, \ldots, q_n]$, with $q_i = q_i^\prime \cdot g$.  \\

\noindent
Case 3: $(K,H) = (\Sp(k+1),\Sp(k))$ and hence $\codim(F) = 4k+4$. Therefore $G = \Sp(k+1),\Sp(k+1)/\lbrace \pm \Id\rbrace$. Hence we can assume $G = \Sp(k+1)$ acts possibly with kernel $\bb Z_2$. Since $K_p$ is connected  and $\dim \Sp(k+1) /\Sp(k) = 4k+3$ there are two cases:
\begin{enumerate}
    \item $\dim K_p/H_p =3$.
    \item $K_p/H_p = \S^1$.
\end{enumerate}
Subcase 1.: $K_p/(H_p)_0 \cong \Sp(1),\SO(3)$. Since any linear action of $\SO(3)$ on a sphere has nontrivial stabilizers, we have $K_p/H_p \cong \Sp(1)$ and the action on $\bb R^{l+1}$ is of quaternionic type. Hence $l+1=4m$ and $\bb R^{4m} = \bigoplus_{i = 0}^r k_iV_{2n_i+1}$ and $H_p$ is connected. Furthermore $\Gamma_p =1$, since $K_p = \Sp(1) \times \Sp(k)$ is simply connected and $G/G_p = \bb H\bb P^k$. 
Hence $L$ is connected and thus $\Gamma_q = 1$. We have recovered the action of $\Sp(k+1)$ on a weighted quaternionic projective space. \\
Subcase 2.: We can assume $G = \Sp(k+1)$ possibly acting with kernel $\bb Z_2 = \lbrace \pm \Id\rbrace$. In any case $G_p = \S^1\times \Sp(k)$ and therefore $\Gamma_p = \bb Z_{p^\prime}$ for some $p^\prime \in \bb Z$ and $K_p = \S^1 \times \Sp(k)$. The $\Sp(k)$-factor acts trivially on $\bb S^l$, hence $H_p = \bb Z_g \times \Sp(k)$. Since $H_p \hookrightarrow L$ is injective, $g$ and $p ^\prime$ are  relatively prime. Therefore there exist weights $q_0, \ldots,q_l$, with $\gcd(q_0, \ldots, q_l) =1$, of the action of the $\S^1$-factor. Hence we recover the $\Sp(k+1)$-action on $\bb C\bb P^{k+1 +l}[p^\prime,\ldots,p^\prime,q_0,\ldots,q_l]$.\\

\noindent
Case 4.: $(K,H) = (\Spin(9), \Spin(7))$ and $k = 15$. $H$ is embedded in $\Spin(9)$ via the spin-representation. Either $G = \Spin(9)$ or $G = \SO(9)$. Hence we can assume that $G = \Spin(9)$, possibly acting with ineffective kernel $\bb Z_2$. Since $K_p$ is connected and $k = 15$, we have $K_p = \Spin(8),\SO(8)$. We have $\Gamma_p = 1$. By Lemma \ref{fixpoint:lem:p}, the action of $K_p$ on $\bb S^l$ is transitive. Hence $H_p = L = \Spin(7)$ and $F$ is a point. Since $L$ is connected, we also have $\Gamma_q = 1$ and thus $\OO$ is a manifold. As in \cite{Grove1997-xz}, the normal sphere bundle $E \to \Spin(9)\cdot p$ is $\Spin(9)$-equivariantly equivalent to the Hopf-fibration:
$$\S^7 = \Spin(8)/\Spin(7) \to \bb S^{15} = \Spin(9)/\Spin(7) \to \bb S^8 = \Spin(9)/\Spin(8) $$
And thus $\OO$ is $\Spin(9)$-equivariantly diffeomorphic to the Cayley plane ${\rm Ca}\bb P^2$.\\

\noindent
Case 5.: $(K,H) = (\Spin(7),G_2)$ and we can assume $G = \Spin(7)$. Since $G_2$ is a maximal connected group in $\Spin(7)$ and $k = 7$ there are no cases left to prove. 

\noindent
Case 6.: $(K,H) = (G_2,\SU(3))$ and we assume $G = G_2$, which has trivial center. Again $\SU(3)$ is a maximal connected subgroup of $G_2$ and $k = 6$. Hence there are no cases left to prove.

\noindent Case 2.1 now follows from case 0 if $k =0$ and from case 2 if $k \ge 1$. Cases 3.1 and 3.2 follow from case 3. 

\section{Ineffective Orbifolds}\label{sec:ineffective}

Let $\mathcal{N} =(G_1 \garrow G_0)$ be an ineffective orbifold with principal local group $K$. We get an effective orbifold in the following way: We define the normal subgroupoid $\mathcal{K} = (K_1 \garrow K_0)$: Let $K_0 = G_0$. Furthermore, let $x \in G_0$. We may assume $\OO$ to be an \'etale groupoid, so any arrow $g \in G_x$ acts as a local diffeomorphism on an open neighborhood of $x$. Hence, we get an isomorphism $dg \colon T_xG_0 \to T_x G_0$. This induces a homomorphism $d \colon G_x \to GL(T_xG_0)$. We define $K_x \coloneqq \ker(d) $. Furthermore we set $K_1 \coloneqq \bigsqcup_{x \in G_0} K_x$, with the topology induced by inclusion. With this data $K_1 \garrow K_0$ is a proper \'etale Lie groupoid, with source and target maps being restriction from $\OO$. $\mathcal{K}$ is a normal subgroupoid, since any arrow $h \colon x \to y$ in $G_1$ induces a diffeomorphism of open neighborhoods of $x$ and $y$ and hence a map $dh\colon T_xG_0 \to T_y G_0$. If $g \in K_x$, then $dg = \Id_x$ and hence $d(hgh^{-1}) = dh \circ dg \circ dh^{-1} = dh\circ dh^{-1} = \Id_y$. Therefore $hgh^{-1} \in K_y$ and $K_x \cong K_y$. Furthermore all $K_x$ in the same connected component of $K_0 = G_0$ are isomorphic to each other, say to a fixed group $K$. The quotient $\OO = \mathcal{N}/\mathcal{K}$ is an effective orbifold. 

Now we turn to the question of how many choices of $\mathcal{N}$ are possible for fixed $\OO$ and $K$, i.e. to the problem of classifying ineffective extensions.

Given $g:x\to y \in G_1$, $k\mapsto g\circ k \circ g^{-1}$ is an isomorphism $K_x\to K_y$. This induces a morphism $\mathcal{N}\to Aut(\mathcal{K})$, further inducing a morphism $\mathcal{O}\to Out(\mathcal{K})$ after quotienting by inner automorphisms of $K$. 

Assuming $\mathcal{O}$ is orbifold-simply connected, we have a look at the induced map of classifying spaces  $B\mathcal{O}\to B\operatorname{Out}\mathcal{K}$. Since $\operatorname{Out}\mathcal{K}$ is connected as a groupoid (i.e. there is an arrow between any two objects, since $K_x$ is isomorphic to a fixed group $K$ at every point $x$), the inclusion of any point induces a groupoid equivalence between $\operatorname{Out}\mathcal{K}$ and the one point groupoid $\operatorname{Out}K$. Hence the classifying spaces are (weakly) homotopy equivalent and we have $[B\mathcal{O},B\operatorname{Out}\mathcal{K}]=[B\mathcal{O},B\operatorname{Out}K]$. in particular, since $B\operatorname{Out}K$ is an Eilenberg-MacLane space $K(\operatorname{Out}K, 1)$, we have $[B\mathcal{O},B\operatorname{Out}\mathcal{K}]=\Hom(\pi_1B\mathcal{O}, \operatorname{Out}K)=0$ since $\pi_1B\mathcal{O}=0$. 

In particular, every map $B\mathcal{O}\to B\operatorname{Out}\mathcal{K}$ is null-homotopic, and we get that the band of every extension is trivial. (For definition of band and triviality thereof, see \cite{Laurent-Gengoux_Stienon_Xu_2009}[Def 3.4/3.8]). Thus, by  \cite{Laurent-Gengoux_Stienon_Xu_2009}[Thm 3.13], extensions are classified by classes in $H^2_{orb}(\OO, Z(K))$. by universal coefficients and Hurewicz, $H^2_{orb}(\OO, Z(K))\cong \Hom(H_2^{orb}(\mathcal{O}), Z(K))\cong \Hom(\pi_2^{orb}(\mathcal{O}), Z(K))$, using again $\pi_1(B\OO)=1$.

By Quillen's Theorem B\cite{QuillenHAKT}, our extension of groupoids induces a long exact sequence of homotopy groups
\begin{align} \label{quillensequence}
    \ldots \to \pi_2^\orb(\OO) \xrightarrow{\partial} K \to \pi_1^\orb(\NN) \to \pi_1^\orb(\OO) = 1.
\end{align}

If we look only for those extensions where $\pi_1^\orb(\NN) = 1$, then $\partial \colon \pi_2^\orb(\OO) \to K$ must be surjective and hence $K = Z(K)$ abelian. 
An immediate consequence is:
\begin{lem}
    Let $\NN$ be an orbifold-simply connected ineffective orbifold. Then the generic local group is abelian.
\end{lem}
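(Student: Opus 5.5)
The lemma follows from the exact sequence \eqref{quillensequence} once we set $\OO = \NN/\mathcal K$ as above. The one point needing care is that $\OO$ is itself orbifold simply connected, since \eqref{quillensequence} was written under the assumption $\pi_1^\orb(\OO)=1$.

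First we show that $\pi_1^\orb(\OO)=1$. The quotient morphism $\NN\to\OO$ is surjective on objects and on arrows, since $\OO$ has the same object space and its arrow space is $G_1$ modulo $\mathcal K$. Quillen's Theorem B applied to this extension gives the tail of the long exact sequence
\[
\pi_1^\orb(\NN)\to\pi_1^\orb(\OO)\to\pi_0(K)=\ast.
\]
Alternatively, one can argue directly that $B\NN\to B\OO$ has homotopy fiber $BK$, which is connected. Either way $\pi_1^\orb(\NN)\to\pi_1^\orb(\OO)$ is surjective, so $\pi_1^\orb(\OO)=1$.

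With this in hand, the exact sequence \eqref{quillensequence},
\[
\pi_2^\orb(\OO)\xrightarrow{\partial}K\to\pi_1^\orb(\NN)=1,
\]
shows that $\partial$ is surjective. Here $K$ is the generic local group of $\NN$, identified with $\pi_1(BK)$, the fiber term.

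Next, $\partial$ is a group homomorphism. It is the connecting map of the fibration $BK\to B\NN\to B\OO$, and at this level of the sequence it is a homomorphism of groups. Since $\pi_2^\orb(\OO)=\pi_2(B\OO)$ is abelian, its image is abelian. That image is all of $K$, so $K$ is abelian.

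The only step needing care is applying Quillen's Theorem B. We need the fiber of $B\NN\to B\OO$ to be $BK$ and the boundary map to be a homomorphism. Both hold because $\mathcal K$ is a normal bundle of groups with fiber $K$ and, as shown above, trivial band, so the homotopy fibers over different points are all identified with $BK$. For this we rely on the discussion preceding the lemma.
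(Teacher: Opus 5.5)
Your proposal is correct and follows the paper's argument: exactness of \eqref{quillensequence} at $K$ together with $\pi_1^\orb(\NN)=1$ makes $\partial\colon\pi_2^\orb(\OO)\to K$ surjective, so $K$ is a homomorphic image of the abelian group $\pi_2^\orb(\OO)$. Your preliminary step showing $\pi_1^\orb(\OO)=1$ is harmless but not needed, because exactness at $K$ does not involve $\pi_1^\orb(\OO)$ (and in that step, $\pi_0(K)$ should read $\pi_0(BK)$).
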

We want to compute the (simply connected) ineffective structures over weighted projective spaces. First we compute their second orbifold homotopy groups:
\begin{lem}\label{orbifolds:orbifoldhomotopyCROSS}
    We have \begin{enumerate}
        \item  $\pi_2^\orb(\bb S^n) = \pi_2(\bb S^n) = 0$ for $n \ne 2$
        \item $\pi_2^\orb(\bb C \bb P(W)) = \bb Z$
        \item $\pi_2^\orb(\bb H \bb P(V)) = 0$
        \item $\pi_2^\orb(\Ca \bb P^2) = \pi_2(\Ca\bb P^2) = 0$
    \end{enumerate}
\end{lem}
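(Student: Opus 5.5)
The plan is to compute each $\pi_2^\orb$ through a fibration coming from the global quotient description of the orbifold. The general principle: if a compact Lie group $H$ acts almost freely on a manifold $M$, then $B(M/H)\simeq M\times_H EH$. This Borel construction fibers as
$$M \to M\times_H EH \to BH,$$
which gives the long exact sequence
$$\pi_2(M)\to \pi_2^\orb(M/H)\to \pi_2(BH)=\pi_1(H)\to \pi_1(M)\to \pi_1^\orb(M/H)\to \pi_0(H).$$
This holds for any almost free action, effective or not. So the quotient need not be effective, which matters because $\cp(W)$ with $\gcd\neq 1$ is ineffective.

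For items 1 and 4, $\bb S^n$ and $\Ca\bb P^2$ are manifolds, so the trivial groupoid (or the frame bundle presentation) gives $B\OO\simeq$ the space itself. The values $\pi_2(\bb S^n)=0$ for $n\neq2$ and $\pi_2(\Ca\bb P^2)=0$ are classical. The latter follows from the fibration $\bb S^{7}\to\bb S^{23}\to\Ca\bb P^2$ of $F_4/\Spin(9)$, or from the cell structure $\Ca\bb P^2=\bb S^8\cup e^{16}$, which is $7$-connected.

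For item 2, take $M=\bb S(W)=\bb S^{2n+1}$ with $H=\S^1$ acting via $\rho_q$. Since $\pi_2(\bb S^{2n+1})=\pi_1(\bb S^{2n+1})=0$ for $n\ge1$, the sequence gives $\pi_2^\orb(\cp(W))\cong\pi_1(\S^1)=\Z$.

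For item 3, take $M=\bb S(V)=\bb S^{4N-1}$ with $H=\SU(2)$. Here $\pi_2(BH)=\pi_1(\SU(2))=0$ and $\pi_2(\bb S^{4N-1})=0$, so $\pi_2^\orb=0$.

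The only point needing care is the degenerate low-dimensional cases. For $\cp(W)$ with $n=0$ the sphere is $\bb S^1$, and one checks $\pi_2^\orb(\cp[p])=\pi_2(B\Z_p)=0$. So item 2 should be read for $\dim\OO\ge 2$, where $\pi_1(\bb S^{2n+1})=0$ and the map $\Z\to\pi_1(M)$ is trivial. I also need to justify that the classifying space of the action groupoid is the Borel construction. That is exactly the theorem on classifying spaces quoted in Section \ref{sec:groupoids}, applied to $\cp(W)$ and $\hp(V)$, which are defined as action groupoids. I expect no real obstacle beyond this bookkeeping.
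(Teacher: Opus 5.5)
Your proof is correct and follows the paper's argument: the Borel fibration $\bb S^N\to E\S^l\times_{\S^l}\bb S^N\to B\S^l$ with $\pi_1(\bb S^N)=\pi_2(\bb S^N)=0$ gives $\pi_2^\orb\cong\pi_1(\S^l)$, and items 1 and 4 are classical. Your caveat that $\cp(W)$ with $\dim_\C W=1$ (a point with local group $\Z_p$) has $\pi_2^\orb=0$ is correct, and it makes explicit the assumption $N\ge 3$ that the paper uses without stating.

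One aside in your item 4 is wrong. There is no fibration $\bb S^7\to\bb S^{23}\to\Ca\bb P^2$: its mapping cone would have integral cohomology $\Z[x]/(x^4)$ with $|x|=8$, which mod-$3$ Steenrod operations rule out. Drop that remark and justify item 4 by the cell structure $\bb S^8\cup e^{16}$ alone.
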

\begin{proof}
    1. and 4. are clear. For the other two, let $\S^l$ with $l = 1,3$ act linearly and almost freely on a sphere $\bb S^N$, with $N(l) = 2n+1$ if $l = 1$ and $N = 4m +3$ if $l = 3$. In both cases $N \ge 3$. We consider the fibration
    $$\bb S^N \to B(\bb S^N/S^l) = ES^l \times_{\S^l} \bb S^N \to BS^l.$$
    
    Which gives a long exacts sequence of homotopy groups:
    $$\ldots\to \pi_2(\bb S^N) \to \pi_2^\orb(\bb S^n/\S^l) \to \pi_2(B\S^l) \to \pi_1(\bb S^N)\to \ldots$$
    Since $N \ge 3$, we have $\pi_1(\bb S^N) \cong \pi_2(\bb S^N) \cong 0$. Hence $$\pi_2^\orb(\bb S^N/\S^l) \cong \pi_2(B\bb\S^l).$$
    For $l = 1$, we have $\bb S^N/\S^1 \cong \bb C \bb P(W)$ and $B\S^1 = \bb C \bb P ^\infty$. Hence $\pi_2^\orb(\bb C \bb P(W)) \cong \pi_2( BS^1) = \bb Z$.\\
    For $l = 3$, we have $\bb S^N/\S^3 \cong \bb H \bb P(V)$ and $B \S^3 \cong \bb H \bb P ^\infty$. Hence we have $\pi_2^\orb(\bb H \bb P(V)) \cong 0$.
\end{proof}
We have the following Corollary:
\begin{cor}\label{orbifolds:ineffectiveCROSS} \mbox{}
    \begin{enumerate}
        \item The only ineffective orbifolds $\OO$ with effective model $\OO_\eff$ isomorphic to $\bb S^n$, $n \ge 3$, $\bb H \bb P (V)$ and $\Ca \bb P^2$ are trivial, i.e. given by a trivial action of a finite group $K$ on $\OO_\eff$. Consequently there are no simply connected ineffective such orbifolds.
        \item Let $\OO$ be a simply connected orbifold with $\OO_\eff \cong \bb C\bb P(W)$, then $\OO$ has generic local group $\bb Z_k$ and is given as the quotient of the following action: $$\S^1 \acton \bb S(W), z \ast a = z^k\cdot a,$$
        where $\cdot$ denotes the original action defining $\bb C \bb P(W)$.
        \item Let $\OO$ be a not necessarily simply connected orbifold with $\OO_\eff \cong \bb C\bb P(W)$, and let $K$ be the generic local group. Let $\psi \colon \bb Z \to \S^1$ be $1\mapsto e^{2\pi i /k}$. Then $\OO$ is given by  $$K {}_\phi \times_{\psi} \S^1 \acton \bb S(W), [g,z] \ast a = z^k\cdot a$$ for some homomorphism $\phi \colon \bb Z \to Z(K) \subset K$,
        where $\cdot$ denotes the original action defining $\bb C \bb P(W)$. 
    \end{enumerate}
\end{cor}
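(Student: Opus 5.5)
The plan is to read off the corollary from the cohomological classification just established and Lemma \ref{orbifolds:orbifoldhomotopyCROSS}. All spaces involved ($\bb S^n$ with $n\ge 3$, $\bb C\bb P(W)$, $\bb H\bb P(V)$, $\Ca\bb P^2$) are orbifold simply connected. Hence, for every finite group $K$, the band of any extension $1\to\mathcal K\to\NN\to\OO_\eff\to 1$ is trivial, and extensions with generic local group $K$ are classified by
$$H^2_\orb(\OO_\eff,Z(K))\cong \Hom(\pi_2^\orb(\OO_\eff),Z(K)).$$

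For part 1, Lemma \ref{orbifolds:orbifoldhomotopyCROSS} gives $\pi_2^\orb=0$ in these three cases. So the class group vanishes and the only extension is the trivial one, namely the action groupoid of $K$ acting trivially on $\OO_\eff$. Its classifying space is $B\OO_\eff\times BK$, so $\pi_1^\orb=K\neq 1$ for $K$ nontrivial. Equivalently, in the Quillen sequence \eqref{quillensequence} the map $\partial$ must surject onto $K$, but it starts from $0$. Hence no nontrivial simply connected ineffective models exist.

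For parts 2 and 3, $\pi_2^\orb(\bb C\bb P(W))=\bb Z$, so extensions correspond to homomorphisms $\phi\colon\bb Z\to Z(K)$. I would realize each class explicitly. Consider $\tilde G=K\,{}_\phi\times_\psi \S^1$, the quotient of $K\times\S^1$ in which the generator $1\in\bb Z$ is identified via $(\phi(1)^{-1},\psi(1))$. It acts on $\bb S(W)$ through $[g,z]\mapsto z^k\cdot$, where $K$ acts trivially and $k$ is chosen so that $\psi$ lands in the kernel of $z\mapsto z^k$. This action is almost free, its effective quotient is $\bb C\bb P(W)$, and its generic isotropy is the kernel of $\tilde G\to \S^1$, $[g,z]\mapsto z^k$, which is isomorphic to $K$.

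The main step is to check that the resulting extension class is $\phi$. I would read it off from the boundary map $\partial\colon\pi_2^\orb(\bb C\bb P(W))=\pi_1(\S^1)\to K$ of the fibration $\bb S(W)\to B\NN\to B\tilde G$. Concretely, the generator is the loop in $\S^1$, whose lift to $\tilde G$ is a path ending at an element of the generic isotropy. Tracking this path through the identification gives $\partial(1)=\phi(1)$ (up to a sign convention). Since the band is trivial and extensions are determined by their class (Laurent-Gengoux–Stiénon–Xu), every extension is of this form, which proves part 3. This identification of the boundary map with the classifying class is where I expect the real work. I would argue via naturality: both sides are homomorphisms $\bb Z\to Z(K)$ that agree on the trivial extension and are additive under Baer sum.

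For part 2, simple connectivity forces $\partial$ to be surjective by \eqref{quillensequence}. So $K$ is cyclic, generated by $\phi(1)$, say $K=\bb Z_k$ with $\phi$ the standard surjection. Then $K\,{}_\phi\times_\psi\S^1\cong\S^1$ via $[g,z]\mapsto$ the $k$-th root parametrization, and the action becomes $z\ast a=z^k\cdot a$, as claimed.
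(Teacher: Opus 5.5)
Your proposal follows the paper's proof essentially step for step. Part 1 uses the vanishing of $\Hom(\pi_2^\orb(\OO_\eff),Z(K))$. Part 3 realizes each $\phi$ by the action of $K\,{}_\phi\times_\psi\S^1$ and checks that the boundary map is $\phi$. Part 2 reads off cyclicity of $K$ from surjectivity of $\partial$. Your choice of an identification $K\cong\bb Z_k$ sending $\phi(1)$ to the standard generator plays the same role as the paper's remark that $\mathrm{Aut}(\bb Z_k)$ acts transitively on the surjections $\bb Z\to\bb Z_k$.

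There are two small points to fix.

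First, the step you single out is that an extension is determined by its boundary map. The paper only asserts this, and your proposed justification does not prove it. Two assignments $\NN\mapsto c(\NN)$ and $\NN\mapsto\partial_\NN$ that are additive and vanish on the trivial extension need not coincide. Moreover, for nonabelian $K$ there is no Baer sum, only a torsor structure. A clean fix uses only what you already have:
\begin{itemize}
\item Your construction shows that $\NN\mapsto\partial_\NN$ is surjective onto $\Hom(\bb Z,Z(K))$.
\item That set has $|Z(K)|$ elements.
\item The set of extensions with trivial band is a torsor under $H^2_\orb(\cp(W),Z(K))\cong Z(K)$, so it also has $|Z(K)|$ elements.
\end{itemize}
Hence the map is a bijection, and every extension $\NN$ is the one you constructed from $\phi=\partial_\NN$.

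Second, the condition you impose on $k$ is vacuous. That $\psi$ lands in the kernel of $z\mapsto z^k$ is automatic, since $\psi(1)=e^{2\pi i/k}$. What you actually need is $\phi(1)^k=1$. Otherwise $(\phi(1)^{-1},e^{2\pi i/k})^k=(\phi(1)^{-k},1)$ is identified with the identity, so $K$ does not inject into $K\,{}_\phi\times_\psi\S^1$. The generic isotropy is then $K/\langle\phi(1)^k\rangle$ rather than $K$. Take, for example, $k=\mathrm{ord}\,\phi(1)$; the paper's statement also leaves $k$ unspecified.
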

\begin{proof}Let $K$ be the generic local group of $\OO$. 
    \begin{enumerate}
        \item By Lemma \ref{orbifolds:orbifoldhomotopyCROSS}, we have that $H^2_\orb (\OO,Z(K)) \cong \Hom(\pi_2^\orb(\OO),Z(K)) = 0$. Hence there is only one ineffective structure, which must be trivial. 
        \item  Since $\OO$ is simply connected, we have that $\partial \colon \pi_2^\orb(\bb C \bb P(W)) = \bb Z \to K$ must be surjective. Therefore $K$ must be cyclic and hence $K \cong \bb Z_k$. Furthermore, we have $H^2_\orb(\OO,Z(K)) \cong \Hom(\bb Z,\bb Z_k) \cong \bb Z_k$. The surjective homomorphisms are given by $\bb  Z_k^\times = \lbrace n \in \bb Z_k\setvert \gcd(k,n)=1\rbrace $, which the outer automorphism group of $\bb Z_k$ acts transitively on. Hence there is up to an isomorphism of the outer automorphism group only a single nontrivial ineffective structure with $\OO_\eff  = \bb C\bb P(W)$. Since the previously defined action induces a simply connected ineffective structure, with generic isotropy $\bb Z_k$, the claim follows.
        \item By the long exact sequence of homotopy groups for the fibration defining $\cp(W)$, a generator of $\pi_2^{orb}(\cp(W))$ is induced by $\pi_1S^1$. Thus we see that our construction for a given $\phi$ produces a long exact sequence \eqref{quillensequence} with boundary map $\partial$ equal to $\phi$. Thus our construction covers all classes in $H^2_{orb}(\mathcal{O}, Z(K))$.
    \end{enumerate}
\end{proof}

\printbibliography

\end{document}